\setlist[enumerate,1]{label=\textup{(\arabic*)}}
\theoremstyle{plain}
\newtheorem{thm}{Theorem}[section]
\newtheorem {lem} [thm]{Lemma}
\newtheorem {prop}[thm] {Proposition}
\newtheorem{cor}[thm]{Corollary}
\theoremstyle{definition}
\newtheorem {defn}[thm] {Definition}
\newtheorem {rem} [thm]{Remark}
\newtheorem{ex}[thm]{Example}
\newcommand{\eqn}{\begin{equation}}
\newcommand{\eqne}{\end{equation}}
\DeclareMathOperator{\Res}{Res}
\DeclareMathOperator{\Ex}{Ex}
\DeclareMathOperator{\Prime}{Prime}
\DeclareMathOperator{\tr}{tr}
\DeclareMathOperator{\Aut}{Aut}
\DeclareMathOperator{\Iso}{Iso}
\newcommand{\BB}{B}
\DeclareMathOperator{\Homeo}{Homeo}
\DeclareMathOperator{\Mult}{\mathcal{M}}
\DeclareMathOperator{\red}{r}
\DeclareMathOperator{\clsp}{\overline{span}}
\DeclareMathOperator{\Null}{\mathcal{N}}
\DeclareMathOperator{\supp}{supp}
\newcommand{\I}{\mathcal I}
\DeclareMathOperator{\Ind}{Ind}
\newcommand{\B}{\mathcal B}
\newcommand{\M}{\mathcal M}
\newcommand{\EE}{\mathcal E}
\newcommand{\E}{\mathbb E}
\newcommand{\RR}{\mathcal R}
\newcommand{\OO}{\mathcal{O}}
\newcommand{\F}{\mathbb F}
\newcommand{\FF}{\mathcal F}
\newcommand{\C}{\mathbb C}
\newcommand{\Z}{\mathbb Z}
\newcommand{\N}{\mathbb N}
\newcommand{\cst}{\ifmmode\mathrm{C}^*\else{$\mathrm{C}^*$}\fi}
\newcommand*{\onto}{\twoheadrightarrow}
\begin{document}

\author{Krzysztof Bardadyn}
	\address{Faculty of Mathematics, University of Bia\l ystok, ul. K. Cio\l kowskiego 1M, 15-245
	Bia\l ystok, Poland}
	\email{kbardadyn@math.uwb.edu.pl}

\author{Bartosz Kwa\'sniewski}
	\address{Faculty of Mathematics, University of Bia\l ystok, ul. K. Cio\l kowskiego 1M, 15-245
	Bia\l ystok, Poland}
	\email{b.kwasniewski@uwb.edu.pl}

\subjclass{47L10, 16S35,54H15}

\title{\small Topologically free actions and ideals in 
\\
 twisted  Banach algebra crossed products}

\begin{abstract} 
 We generalise the influential $C^*$-algebraic results of Kawamura-Tomiyama and Archbold-Spielberg for crossed products of discrete groups actions to the realm of Banach algebras and twisted actions. 
We prove that topological freeness is equivalent to the intersection property for all reduced twisted Banach algebra crossed products coming from subgroups, and in the untwisted case to a generalised intersection property for a full $L^p$-operator algebra crossed product for  any $p\in [1,\infty]$.  This gives efficient simplicity criteria for various Banach algebra crossed products. We also use it to identify  the prime ideal space of some crossed products as the  quasi-orbit space of the action. 
For amenable actions we prove that the full and reduced twisted $L^p$-operator algebras coincide.  
\end{abstract}
\maketitle

\section{Introduction}

Topological freeness is a key condition in the study of the   $C^*$-algebras associated to discrete group actions  and their generalisations, see 
\cite{Zeller-Meier}, \cite{OD}, \cite{OP}, \cite{Kawa-Tomi}, \cite{Tomiyama}, \cite{Arch_Spiel}, \cite{Exel_Laca_Quigg}, \cite{Kwa-Meyer-1}, \cite{Kwa-Meyer}, \cite{Kwa-Meyer2}. It is crucial for the description of the ideal structure and for pure infiniteness criteria of various $C^*$-algebras, and is known to be closely related to such conditions  as  pure outerness, proper outerness, full Connes spectrum, aperiodicity,   almost extension property, uniqueness of a pseudo-expectation,  detection of ideals, supporting, filling, maximal abelian  or Cartan inclusions, see     \cite{Kwa-Meyer-1}, \cite{Kwa-Meyer2} and references therein. Topological freeness is also important in the spectral analysis of weighted composition operators on $L^2$-spaces, see \cite{OD},  \cite{Anton_Lebed}, \cite{bar_kwa}.
The title of the present article alludes to the seminal paper  \cite{Arch_Spiel}, as we generalise the corresponding results to the setting of twisted Banach algebra crossed products.

In the 2010s, Phillips initiated a program of generalising important $C^*$-algebraic constructions and results  to Banach algebras represented on $L^p$-spaces for $p\in[1,\infty)$. In recent years the number of papers in this direction is rapidly growing,
 see the survey paper \cite{Gardella} and/or \cite{Phillips}, \cite{PH},  \cite{Gardella_Lupini17}, \cite{Chung}, \cite{Gardella},  \cite{cgt},\cite{Austad_Ortega},  \cite{Wang_Zhu}, \cite{Hetland_Ortega}, \cite{BKM1}, and the increasing importance of this field of research is   reflected in the growing number of publications in top notch journals, see
\cite{Gardella_Thiel16}, \cite{Gardella_Lupini17}, \cite{Gardella_Thiel19}, \cite{Boedihardjo}, \cite{Gardella_Thiel22}, \cite{Hetland_Ortega}.
In particular, Banach algebra crossed products   \cite{DDW}, and so also $L^p$-operator algebra crossed products are one of the most fundamental constructions, cf. \cite{Phillips}, \cite{PH}, \cite{Chung}, \cite{Gardella},  \cite{cgt}, \cite{Wang_Zhu}. 

 In this article, we contribute  to the development of this modern theory by solving  one of the fundamental problems which is a Banach algebra version of the well known $C^*$-algebraic simplicity criterion.
In particular, Phillips \cite[Question 8.2]{Phillips} and   Gardella-Lupini \cite[Problem 8.2]{Gardella_Lupini17}  ask  whether topological freeness and minimality of a discrete  group action $\varphi: G\to \Homeo(X)$, on a locally compact Hausdorff space $X$, are sufficient for simplicity of the associated reduced $L^p$-operator algebra crossed product. 
For $C^*$-algebras this is a well-known fact that follows from a theorem of Kawamura-Tomiyama \cite{Kawa-Tomi},  generalised by  Archbold-Spielberg  in \cite{Arch_Spiel}, which characterises topological freeness of a discrete group action  in  terms of a generalised intersection property for the full crossed product. 
This implies simplicity of reduced crossed products for minimal actions, and shows that topological freeness is  necessary for simplicity of the full crossed product.

In this  paper we generalise the results of Kawamura-Tomiyama and Archbold-Spielberg in two directions. 
Firstly, we 
extend it 
not only to $L^p$-operator algebras, $p\in [1,\infty]$, but to a much more general class of Banach algebra crossed products that we introduce in the paper (see Theorem \ref{thm:generalised_intersection_property}, Corollary \ref{cor:general_generalised_intersection}).
Secondly, we prove a version of it for twisted actions (Theorem \ref{thm:twisted_generalised_intersection_property}), which is inspired by recent $C^*$-algebraic results \cite{Kwa-Meyer2}, 
characterising the generalised intersection property for every intermediate $C^*$-algebra.
Our proofs differ from the original $C^*$-algebraic arguments that usually exploit positivity, a concept absent in the   Banach algebra setting.
One of the main  ideas to overcome this is to use not only the canonical expectation but the whole family of `expectations', indexed by elements of the group $G$, 
that we refer to as the \emph{Fourier decomposition}. 
An important role is also played by the minimality of the supremum norm in the algebra $C_0(X)$, that $G$ acts upon.
Another new ingredient is that in order to have an `if and only if' result for twisted actions, we  consider  crossed products coming from subgroups $H\subseteq	G$.
Here in the proof we had to develop new techniques. In particular we use that any second cohomology group   for free groups is trivial and that crossed products for a trivial action can be described as section algebras of a bundle of twisted group algebras. One of the main applications  (see Theorem \ref{cor:simplicity_minimality}) is that for a topologically free  action $\alpha$ on  the algebra $C_0(X)$, and any twist $u\in H^2(G, C_u(X))$, 
a reduced twisted  Banach algebra crossed product $F_{\RR}(\alpha,u)$ is \emph{simple} if and only if the action is minimal.

A notion of  a \emph{reduced Banach algebra crossed product} that we introduce (in Defintion \ref{def:reduced_crossed_product}) 
generalises reduced group Banach algebras defined recently by Phillips in \cite{Phillips19},  
and has reduced twisted $L^p$-operator algebra crossed products as examples. 
In fact, all of our results apply to a more general class of  $L^P$-operator algebra crossed products
where $P\subseteq [1,\infty]$ is a set of H\"older exponents (Definition \ref{def:L^P_crossed_products}). 
This class includes as special cases different algebras studied by other authors  and allows for nice phrasing of our results. We prove that for \emph{amenable actions} the full and reduced $L^P$-crossed products coincide (Theorem \ref{thm:ambenability_reduced_equals_full} and Remark \ref{rem:properties_of_F^P}),
and thus for amenable actions  simplicity of any $L^P$-crossed product is equivalent to topological freeness and minimiality of the action.

We also introduce \emph{residually reduced Banach algebra crossed products} (Definition \ref{def:residually_reduced}), which 
for $C^*$-algebras coincide with  reduced crossed products for exact actions  \cite{Sierakowski}. 
The $L^P$-operator algebra crossed products for $P\subseteq \{1,\infty\}$ are always residually reduced. We show that if $F_{\RR}(\alpha,u)$ is residually reduced and the action is residually topologically free, then   the \emph{ideal lattice} of   $F_{\RR}(\alpha,u)$ is canonically isomorphic to the lattice of $G$-invariant open subsets of $X$ (Theorem \ref{thm:ideal structure}). If in addition $X$ is second countable, this isomorphism induces a homeomorphism from the prime ideal space $\Prime(F_{\RR}(\alpha,u))$ onto the quasi-orbit space $\OO(X)$ of the action (Corollary \ref{cor:effros-hahn}). 
This generalises, and in some cases improves, well known results from $C^*$-algebra theory to a much more general Banach algebra framework.
  
The paper is organised as follows.
 In Section \ref{sec:preliminaries}, we fix notation and discuss  basic facts concerning the Banach algebra   $F(\alpha,u)=\ell^{1}(G,C_0(X))$ associated to a twisted action $(\alpha,u)$ on $C_0(X)$. We also introduce full and reduced  twisted $L^p$-operator algebra crossed products $F^p(\alpha,u)$ and $F^{p}_{\red}(\alpha,u)$,  $p\in[1,\infty]$. 
Section 3 is devoted to a detailed proof, built on ideas of \cite{cgt}, that for amenable actions we always have $F^{p}(\alpha,u)=F^{p}_{\red}(\alpha,u)$. In Section \ref{sec:reduced crossed products}, we introduce reduced Banach algebra crossed products and more general Banach algebra crossed products $F_{\RR}(\alpha,u)$ that admits the Fourier decomposition. 
We also  introduce algebras $F^{P}(\alpha,u)$ and $F^{P}_{\red}(\alpha,u)$ for any set of parameters $P\subseteq [1,\infty]$. Section \ref{sec:the_main_result} presents the proofs of our main theorems. Finally, in Section \ref{sec:consequences} we show how they can be used to derive simplicity, or more generally how to describe the ideal structure of  residually reduced crossed products.

\subsection*{Acknowledgements}

The authors were supported by the National Science Center (NCN) grant no.~2019/35/B/ST1/02684.
We would like to thank Andrew McKee for a number of useful comments, especially those on amenable actions, and 
 to an  anonymous reviewer for a number of suggestions that improved the presentation.

\section{Preliminaries}
\label{sec:preliminaries}
Throughout this paper we fix an action $\varphi$ of a discrete group $G$ on a   locally compact Hausdorff space $X$. Thus  $\varphi: G\to \Homeo(X)$ is a group homomorphism into a group of homeomorphisms.
Equivalently, we fix a group action  $\alpha:G\to \Aut(C_0(X))$ on the commutative $C^*$-algebra $C_0(X)$ of continuous functions vanishing at infinity;
as then  we necessarily have $\alpha_s(a)=a\circ\varphi_{s^{-1}}$, $s\in G$, for some action $\varphi$ on $X$. This action uniquely extends to the action $\alpha:G\to \Aut(C_b(X))$ on the $C^*$-algebra  $C_b(X)$ of continuous bounded functions
(the multiplier algebra  of $C_0(X)$), which in turn restricts to an action on the unitary group
 $C_u(X)=\{ a\in C_b(X): |a(x)|=1 \text{ for all }x\in X\}$   of  $C_b(X)$. 
Thus we may view the abelian group $C_u(X)$ as a $G$-module, that yields a  cohomology for $G$. 
We recall that the corresponding group of $2$-\emph{cocycles} $Z^2(G,C_u(X))$  consists of  
maps
$u: G\times G\to C_u(X)$  satisfying
\begin{equation}\label{eq:cocycle_identity}
\alpha_r(u(s,t))u(r,st)=u(r,s)u(rs,t), \qquad r,s,t\in G.
\end{equation}
The \emph{second cohomology group} $H^2(G, C_u(X))$ is the quotient of $Z^2(G,C_u(X))$ by the subgroup of \emph{$2$-cobundaries}, which are  maps of the form $d^2(\xi)(s,t):=\alpha_s(\xi(t))\xi(st)^{-1}\xi(s)$, $s,t\in G$, for some map $\xi:G\to C_u(X)$.
Thus two cocycles $u,v$ are \emph{equivalent} (\emph{cohomologous}) if $u(s,t)\xi(st)=v(s,t)\alpha_s(\xi(t))\xi(s)$ for some 
 $\xi: G\to C_u(X)$ and all $s,t\in G$.
We say that $u$ is \emph{normalised} if $u(1,1)=1$. Identity \eqref{eq:cocycle_identity} 
implies that $u(1,t)=u(1,1)$ and $u(t,1)=\alpha_t(u(1,1))$ 
  for all $t\in G$.  
Hence
if $u$ is normalised, then $u(t,1)=u(1,t)=1$ for all $t\in G$. 
Every cocycle is equivalent to a normalised one becasue $u \cdot d^2(\xi)$ is normalised for any $u$ and $\xi$ with  $\xi(1)=\overline{u(1,1)}$.  

To any cocycle $u\in Z^2(G,C_u(X))$ we associate a Banach $*$-algebra 
 $$
F(\alpha,u):=\ell^{1}(G,C_0(X))=\{a:G\to C_0(X):  \|a\|_1=\sum_{g\in G}\|a(g)\|< \infty\}
$$
 equipped with multiplication and involution  given by 
\begin{equation}\label{eq:crossed_product_operations}
(a * b)(r)=\sum_{st=r}a(s)\alpha_s(b(t))u(s,t), \qquad
a^*(t)=\alpha_t(a(t^{-1})^*)u(t,t^{-1})^*u(1,1)^*
\end{equation}
(associativity of the product $*$ is equivalent to 
the identity \eqref{eq:cocycle_identity}, and it implies that $^*$ is an involution), cf. \cite[2.4]{Zeller-Meier}. 
Viewing $F(\alpha,u)$ as  the closed linear span of elements $a_s\delta_s$ where $\delta_s$ is the indicator of  $s\in G$  and $a_s \in C_0(X)$, we get 
$$
(a_{s}\delta_{s})(a_{t}\delta_{t})=a_{s}\alpha_{s}(a_{t})u(s,t)\delta_{st}, \quad  (a_t\delta_t)^*=\alpha_{t^{-1}}(a_t^*)  u(t^{-1},t)^*u(1,1)^*\delta_{t^{-1}},
$$ 
for all $s,t\in G$, $a_{s},a_{t}\in C_0(X)$. In particular, $F(\alpha,u)=\overline{\bigoplus_{t\in G} C_0(X) \delta_t}$ is a $G$-graded $*$-algebra, that is  $(C_0(X) \delta_s)\cdot(C_0(X) \delta_t)=C_0(X) \delta_{st}$
and $(C_0(X)\delta_t)^*=C_0(X)\delta_{t^{-1}}$. Also $C_0(X)\ni a\mapsto a u(1,1)^*\delta_1\in C_0(X)\delta_1$ is an isometric isomorphism of Banach algebras allowing us to identify $C_0(X)$ with a  subalgebra of $F(\alpha,u)$. 
Any approximate unit in $C_0(X)$ is  an approximate unit in $F(\alpha,u)$.
\begin{defn}
We call $F(\alpha,u)$  the  \emph{universal Banach algebra crossed product} for $(\alpha,u)$. 
When $u\equiv 1$ we write $F(\alpha):=F(\alpha,1)$.
\end{defn}

Up to a natural isomorphism  $F(\alpha,u)$ only depends on the equivalence class of $u$:
\begin{prop}[cf. {\cite[Proposition 2.5]{Zeller-Meier}}]\label{prop:equivalent_cocycles} 
Cocycles $u,v\in Z^2(G,C_u(X))$  are equivalent if and only if $F(\alpha,u)\cong F(\alpha,v)$ by an  isometric isomorphism  that respects the $G$-gradings and it is the identity on $C_0(X)$.
Any such isomorphism $\psi:F(\alpha,u) \to F(\alpha,v)$   is  
 $*$-preserving  and is of the form
\begin{equation}\label{eq:Multiplier-isomorphism}
 \psi(b)(t)=b(t) \xi(t), \qquad b\in F(\alpha,u), t\in G,
\end{equation}
where $\xi:G\to C_u(X)$ is such that $u=vd^{2}(\xi)$. Thus such isomorphisms 
are parametrised 
by $1$-cocycles $Z^1(G,C_u(X)):=\{\omega:G\to C_u(X): \omega(st)=\omega(s)\alpha_s(\omega(t))\text{ for }s,t \in G\}$.
\end{prop}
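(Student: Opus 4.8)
The plan is to prove both implications from the explicit formula $\psi(b)(t)=b(t)\xi(t)$, reducing every verification to the coboundary identity and exploiting crucially that $C_b(X)$ is commutative and that the $G$-grading lets one argue component by component.

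\emph{The ``if'' direction.} Assume $u=v\,d^2(\xi)$, that is $u(s,t)\xi(st)=v(s,t)\alpha_s(\xi(t))\xi(s)$ for all $s,t\in G$. I would set $\psi(b)(t)=b(t)\xi(t)$ and check the asserted properties one at a time. Since each $\xi(t)\in C_u(X)$ is unimodular, multiplication by $\xi(t)$ is an isometry of $C_0(X)$ onto itself, so $\psi$ is an isometric bijection (with inverse obtained from $\overline{\xi(t)}$) which carries $C_0(X)\delta_t$ onto $C_0(X)\delta_t$ and hence respects the grading. Multiplicativity is the main computation: expanding both $\psi(a* b)(r)=(a* b)(r)\xi(r)$ and $\bigl(\psi(a)*\psi(b)\bigr)(r)$ and comparing the summands indexed by $st=r$, one sees that $\psi(a* b)=\psi(a)*\psi(b)$ is equivalent, after cancelling the common factor $a(s)\alpha_s(b(t))$ and using commutativity of $C_b(X)$, to exactly the relation $u(s,t)\xi(st)=v(s,t)\alpha_s(\xi(t))\xi(s)$. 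Evaluating the cocycle identity at $(1,1)$ forces $\xi(1)=u(1,1)v(1,1)^*$, which is precisely what makes $\psi$ agree with the identity under the two embeddings $a\mapsto au(1,1)^*\delta_1$ and $a\mapsto av(1,1)^*\delta_1$ of $C_0(X)$; the $*$-preservation follows from a parallel manipulation that feeds the defining relation at the pair $(t,t^{-1})$ into the formula for the involution.

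\emph{The ``only if'' direction.} Conversely, given an isometric grading-preserving isomorphism $\psi$ that fixes $C_0(X)$, I would reconstruct $\xi$. Grading preservation gives $\psi(C_0(X)\delta_t)=C_0(X)\delta_t$, so one may write $\psi(a\delta_t)=\Phi_t(a)\delta_t$ for a linear isometry $\Phi_t$ of $C_0(X)$. The key observation is that the embedded copy of $C_0(X)$ acts on $C_0(X)\delta_t$ by honest left multiplication, $c\cdot(a\delta_t)=(ca)\delta_t$, since $\alpha_1=\mathrm{id}$ and $u(1,t)=u(1,1)$ kill the cocycle factors. Applying $\psi$ and using that it fixes $C_0(X)$ shows that $\Phi_t$ is a $C_0(X)$-module map, hence is multiplication by a multiplier $\xi(t)\in C_b(X)=M(C_0(X))$; as $\Phi_t$ is isometric, peaking functions in $C_0(X)$ force $|\xi(t)(x)|=1$ for every $x$, so $\xi(t)\in C_u(X)$ and $\psi$ has the claimed form. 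Finally, expanding $\psi\bigl((a\delta_s)(b\delta_t)\bigr)=\psi(a\delta_s)\psi(b\delta_t)$ and comparing coefficients (these are determined because $C_0(X)$ has an approximate unit) returns the coboundary relation, so $u=v\,d^2(\xi)$ and $u,v$ are equivalent; the value of $\xi(1)$ again drops out of the identity-on-$C_0(X)$ requirement.

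\emph{Main obstacle and parametrisation.} The step I expect to require the most care is the reconstruction of $\xi$ in the converse: verifying that $\Phi_t$ is genuinely a module map (correctly accounting for the $u(1,1)$-twisted embeddings of $C_0(X)$) and that an isometric module self-map of $C_0(X)$ is multiplication by a unimodular continuous function. Both rest on the commutativity of $C_b(X)$ and on the identification $\operatorname{End}_{C_0(X)}(C_0(X))\cong M(C_0(X))=C_b(X)$, so I would isolate them as a short lemma rather than inline the argument. For the parametrisation claim I would observe that $\xi\mapsto d^2(\xi)$ is a homomorphism from the abelian group of maps $G\to C_u(X)$ (again because $C_b(X)$ is commutative) with kernel exactly $Z^1(G,C_u(X))$; hence, once one solution $\xi_0$ of $u=v\,d^2(\xi_0)$ is fixed, the full solution set is the coset $\xi_0\cdot Z^1(G,C_u(X))$, and since $\psi$ both determines and is determined by $\xi$ via $\psi(a\delta_t)=a\xi(t)\delta_t$, the isomorphisms in question are parametrised by $Z^1(G,C_u(X))$.
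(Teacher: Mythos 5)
Your proof is correct and follows essentially the same route as the paper: the ``if'' direction by checking that multiplicativity of $\psi(b)(t)=b(t)\xi(t)$ is equivalent to the coboundary relation (with $\xi(1)=u(1,1)v(1,1)^*$ giving the identity on $C_0(X)$ and the relation at $(t,t^{-1})$ giving $*$-preservation), and the ``only if'' direction by using grading preservation and the $C_0(X)$-bimodule property to realise each component map as multiplication by a multiplier $\xi(t)\in C_b(X)$, then deriving the coboundary relation, with the parametrisation by the coset $\xi_0\cdot Z^1(G,C_u(X))$ exactly as in the paper. Your peaking-function argument that isometry of $\Phi_t$ forces $|\xi(t)(x)|=1$ is a welcome detail that the paper's proof leaves implicit when asserting $\xi:G\to C_u(X)$.
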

\begin{proof}
Clearly, for any function  $\xi: G\to C_u(X)$  the formula \eqref{eq:Multiplier-isomorphism} defines a surjective $C_0(X)$-module isometry $\psi: F(\alpha,u)\to F(\alpha,v)$ that respects the $G$-grading. 
Since 
$$
\psi(a_s\delta_s \cdot a_t\delta_t)=a_{s}\alpha_{s}(a_{t})u(s,t)\xi(st)\delta_{st},\quad
\psi(a_s\delta_s) \cdot \psi(a_t\delta_t)=a_s\alpha_s(a_t)v(s,t)\alpha_s(\xi(t))\xi(s)\delta_{st}, 
$$
we see that $\psi$ is multiplicative if and only if  $u(s,t)\xi(st)=v(s,t)\alpha_s(\xi(t))\xi(s)$ for all $s,t\in G$.
Thus if $\xi$ with  $u=vd^{2}(\xi)$ exists, then $\psi$ given by \eqref{eq:Multiplier-isomorphism} is an isometric isomorphism. 
It is the identity on $C_0(X)$ because  
for any $a\in C_0(X)$, using that $u(1,1)=v(1,1)\xi(1)$, we get
$$
\psi(a)=\psi(au(1,1)^*\delta_1)=au(1,1)^*\xi(1) \delta_1=av(1,1)^*\delta_1 =a.
$$
Using also that  $
u(t^{-1},t) =v(t^{-1},t)\alpha_{t^{-1}}(\xi(t))\xi(1)^{*}\xi(t^{-1})$
   we get $\psi((a_t\delta_t)^*)=\psi(a_t\delta_t)^*$, for any $a_t\in C_0(X)$, $t\in G$. Thus $\psi$ preserves the involution.
Moreover, for any $\xi': G\to C_u(X)$  we have  $u=vd^{2}(\xi')$ if and only if $d^2(\xi'\xi^{-1})=1$  
if and only if $\xi'=\xi \omega$ where $\omega \in Z^1(G,C_u(X))$. Hence isomorphisms given by \eqref{eq:Multiplier-isomorphism}
are parametrised by elements of $Z^1(G,C_u(X))$. 

Let us now  assume that $\psi: F(\alpha,u) \to  F(\alpha,v)$ is any  isometric isomorphism  respecting the $G$-gradings and $\psi|_{C_0(X)}=\text{id}_{C_0(X)}$.
It suffices to  show  that $\psi$ satisfies \eqref{eq:Multiplier-isomorphism} for some $\xi:G\to C_u(X)$. 
The assumption $\psi|_{C_0(X)}=\text{id}_{C_0(X)}$ means that $\psi$ is a $C_0(X)$-bimodule map.
 For each $t\in G$, we have  a left $C_0(X)$-module map    $E_t:  F(\alpha,v) \to C_0(X)$  given by $E_t(b):=b(t)$, $b\in  F(\alpha,v)$.
Hence   the formula
$$
\xi(t)(a):=E_t\Big(\psi\big(a\delta_t\big)\Big), \qquad a\in C_0(X),
$$
defines a left $C_0(X)$-module map on  $C_0(X)$. This means that $\xi(t)$ is multiplier of $C_0(X)$.  Hence we may identify $\xi(t)$ with a function in $C_b(X)$, so that $\xi(t)(a)$ becomes $\xi(t) \cdot a$. This gives a map  $\xi:G\to C_u(X)$. 
Since $\psi$ preserves the grading, for any 
$t\in G$ and $a_t\in C_0(X)$ we have 
$
\psi(a_t\delta_t)=E_t(\psi(a_t\delta_t))\delta_t= (\xi(t) a_t) \delta_t,
$ as desired.
  \end{proof}
\begin{rem}
In light of Proposition \ref{prop:equivalent_cocycles}, twisted crossed products are in essence parameterised by the elements in the second cohomology group $H^2(G, C_u(X))$, and thus we may restrict our attention to normalised twists. Accordingly, from now on 
\begin{quote}
\emph{we assume all cocycles are normalised, that is $u(1,1)=1$ for all $u\in Z^2(G,C_u(X))$.}
\end{quote}
This   is assumed in  most sources that deal with crossed products, see \cite{Zeller-Meier}, \cite{Busby_Smith}, \cite{Packer_Raeburn}. 
Then the formulas for  involution in $F(\alpha,u)$ and the embedding of $C_0(X)$ into $F(\alpha,u)$ simplify.
\end{rem}
For any $u\in Z^2(G,C_u(X))$, the space $\ell^{1}(G,C_b(X))$ with operations \eqref{eq:crossed_product_operations} is the crossed product for the extended action  $\alpha:G\to \Aut(C_b(X))$,
which naturally embeds into the multiplier algebra $\Mult(F(\alpha,u))$ of the crossed product $F(\alpha,u)=\ell^{1}(G,C_0(X))$.  Thus both  
$C_u(X)$ and $G$  are naturally subgroups  in  the group of invertible isometries in $\Mult(F(\alpha,u))$, where 
 the $G$-embedding in question is $g\mapsto 
1_X\delta_g$.
The group  they generate is isomorphic to   $E:= C_u(X)\times G$, where $(a_{s},s)(a_{t},t):=(a_{s}\alpha_{s}(a_{t})u(s,t), st)$,   and the obvious sequence $1\to C_u(X) \to E \to G\to 1$ is exact. Equivalence classes of such group extensions of $G$ by  $C_u(X)$ correspond to elements in $H^2(G, C_u(X))$.
In particular, the class of $u$ is trivial if and only if the corresponding exact sequence is split, that is there exists a map $\xi:G\to C_u(X)$ such that 
$G\ni t \mapsto (\xi(t),t)\in   E$ is a group homomorphism. When $G=\mathbb{F}_n$ is a free group  on $n\in \N$ generators, such a homomorphism can always be constructed by universality of $\mathbb{F}_n$.
\begin{cor}\label{cor:untwist_the_twisted}
The twisted crossed product can be untwisted, in the sense that $ F(\alpha,u)\cong  F(\alpha)$ by an isometric isomorphism that respects $G$-gradings and is the identity on $C_0(X)$ if and only if
$u$ vanishes in $H^2(G, C_u(X))$. In particular, if $H^2(G, C_u(X))=0$, which always holds when $G$ is a free group, 
then all twisted crossed products by $\alpha$ are naturally isomorphic to each other.
\end{cor}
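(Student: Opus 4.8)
The plan is to obtain the first assertion as an immediate specialisation of Proposition \ref{prop:equivalent_cocycles} to the trivial cocycle. First I would observe that the constant function $1_X$ defines a normalised cocycle $1\in Z^2(G,C_u(X))$ and that $F(\alpha)=F(\alpha,1)$ by definition. Proposition \ref{prop:equivalent_cocycles} then yields an isometric isomorphism $F(\alpha,u)\cong F(\alpha)$ respecting the $G$-gradings and restricting to the identity on $C_0(X)$ if and only if $u$ and $1$ are equivalent. Unwinding the definition of equivalence from the preliminaries, $u\sim 1$ means $u=1\cdot d^2(\xi)=d^2(\xi)$ for some $\xi\colon G\to C_u(X)$, i.e.\ $u$ is a $2$-coboundary, which is exactly the condition that $u$ vanish in $H^2(G,C_u(X))$. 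This settles the stated equivalence.

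For the clause on vanishing cohomology I would simply argue by transitivity. If $H^2(G,C_u(X))=0$ then every $u\in Z^2(G,C_u(X))$ vanishes in $H^2(G,C_u(X))$, so by the equivalence above each $F(\alpha,u)$ is isometrically isomorphic to $F(\alpha)$ through a grading-preserving isomorphism that fixes $C_0(X)$. Composing one such isomorphism with the inverse of another preserves all three properties (isometry, respect for the grading, and fixing $C_0(X)$), so any two twisted crossed products $F(\alpha,u)$ and $F(\alpha,v)$ are isomorphic to one another by an isomorphism of the same type, which is the asserted naturality.

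It then remains to prove $H^2(G,C_u(X))=0$ when $G=\F_n$ is free. Here I would invoke the correspondence, recalled just before the statement, between $H^2(G,C_u(X))$ and equivalence classes of extensions $1\to C_u(X)\to E\to G\to 1$, under which the class of $u$ is trivial precisely when the associated extension splits, i.e.\ admits a homomorphic section of the form $t\mapsto(\xi(t),t)$. For a free group such a section is produced by universality: choosing for each free generator an arbitrary preimage in $E$ under the surjection $E\to G$ extends uniquely to a homomorphism $\sigma\colon\F_n\to E$. The same construction works verbatim for a free group on any generating set, giving the general free-group case.

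The only non-formal step is this last splitting argument, so I expect the main (mild) obstacle to be verifying that $\sigma$ is genuinely a section, i.e.\ that the composite $\F_n\xrightarrow{\sigma}E\to G$ equals $\mathrm{id}_{\F_n}$. This is immediate: the composite agrees with the identity on the free generators, and uniqueness in the universal property of $\F_n$ forces the two homomorphisms to coincide. Hence every cocycle over a free group is a coboundary, $H^2(\F_n,C_u(X))=0$, and the final assertion follows from the previous paragraph.
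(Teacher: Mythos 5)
Your proposal is correct and follows the paper's own route exactly: the first equivalence is the specialisation of Proposition \ref{prop:equivalent_cocycles} to the trivial cocycle, and the free-group case is handled, as in the paper, via the correspondence between classes in $H^2(G,C_u(X))$ and extensions $1\to C_u(X)\to E\to G\to 1$, with a splitting $t\mapsto(\xi(t),t)$ produced by the universal property of the free group. Your extra verification that the composite $\F_n\to E\to G$ is the identity (so that the section really has the required form) is a correct filling-in of the detail the paper leaves implicit.
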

\begin{rem}
By Proposition \ref{prop:equivalent_cocycles}, isometric automorphisms of  $ F(\alpha,u)$ that are $C_0(X)$-bimodule maps respecting the $G$-gradings form a group isomorphic to 
$Z^1(G,C_u(X))$. A  \emph{$1$-coboundary} is a $1$-cocycle $\omega:G\to C_u(X)$ given by  $\omega(t):= a\alpha_t(a^{-1})$, $t\in G$,   for some $a\in C_u(X)$. Such a  cocycle
corresponds to the inner automorphism of  $ F(\alpha,u)$ implemented by $a\in C_u(X)$.
Thus the first cohomology group $H^1(G, C_u(X))$ 
can be interpreted as the outer automorphism group of the crossed product $ F(\alpha,u)$ viewed as a $G$-graded algebra.
\end{rem}
 \begin{defn}
By a \emph{representation} of a Banach algebra $B$ on a Banach space $E$ we  mean a contractive homomorphism $\pi:B\to \BB(E)$
into the algebra of bounded operators on $E$. 
We will usually assume that $\pi$ is \emph{non-degenerate} in the sense that  $\overline{\pi(B)E}=E$. We denote by $\Iso(E)\subseteq \BB(E)$ the group of invertible isometries on $E$. 
\end{defn}
It is well known that if $B$ has a contractive approximate unit and  $\pi$ is non-degenerate
then $\pi$ extends uniquely to a (unital) representation $\overline{\pi}:\M(B)\to \BB(E)$ of the multiplier algebra of $B$.  Note that $C_0(X)$  has a contractive approximate unit which is also an approximate unit in every  crossed product $F(\alpha,u)$, $u\in Z^2(G,C_u(X))$.
We fix a twisted action $(\alpha,u)$ on $C_0(X)$, and extend some classical results for representations of $F(\alpha,u)$ on Hilbert spaces from \cite{Zeller-Meier} to representations on Banach spaces.
\begin{defn}
A \emph{covariant representation} of $(\alpha,u)$ on a Banach space  $E$ is a pair $(\pi, v)$ where $\pi:C_0(X)\to \BB(E)$ is a non-degenerate representation and $v: G\to \Iso(E)$ satisfies
$$v_sv_t=\overline{\pi}(u(s,t))v_{st}, \qquad  v_t\pi(a)v_{t}^{-1}=\pi(\alpha_t(a)), \qquad\text{ for  }s,t\in G,\,\, a\in C_0(X).
$$
\end{defn}
\begin{lem}[cf. {\cite[Proposition 2.7, 2.8]{Zeller-Meier}}] \label{lem:integration_disintegration} 
Non-degenerate representations  of $F(\alpha,u)$ are in one-to-one correspondence with covariant representations  of $(\alpha,u)$.
The representation of $F(\alpha,u)$ corresponding to  $(\pi, v)$ is given by the  formula  $\pi\rtimes v\big(b\big)=\sum_{t\in G}\pi(b(t))v_t$.
\end{lem}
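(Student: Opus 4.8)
The plan is to establish the correspondence in both directions and then verify that the two constructions are mutually inverse. The core claim is the classical integrated-form / disintegration bijection, and since the statement already fixes the formula $\pi\rtimes v(b)=\sum_{t\in G}\pi(b(t))v_t$, the real work is to check that this formula always yields a contractive, non-degenerate representation of $F(\alpha,u)$, and conversely that every such representation arises this way from a unique covariant pair. I would organise the argument around the $G$-grading $F(\alpha,u)=\overline{\bigoplus_{t}C_0(X)\delta_t}$ already recorded in the excerpt, since it lets me pass freely between $b\in F(\alpha,u)$ and its components $b(t)\delta_t$.

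First I would treat the \emph{integration} direction. Given a covariant pair $(\pi,v)$, I define $\pi\rtimes v$ on the dense subalgebra of finitely supported functions by the stated formula and then show it is contractive for $\|\cdot\|_1$: since each $v_t$ lies in $\Iso(E)$ we have $\|\pi\rtimes v(b)\|\le\sum_t\|\pi(b(t))\|\,\|v_t\|\le\sum_t\|b(t)\|=\|b\|_1$, using that $\pi$ is contractive. Multiplicativity is the one genuine computation: using $(a_s\delta_s)(a_t\delta_t)=a_s\alpha_s(a_t)u(s,t)\delta_{st}$ from the excerpt together with the covariance relations $v_s\pi(a)v_s^{-1}=\pi(\alpha_s(a))$ and $v_sv_t=\overline\pi(u(s,t))v_{st}$, one checks $\pi\rtimes v(a_s\delta_s)\,\pi\rtimes v(a_t\delta_t)=\pi\rtimes v\big((a_s\delta_s)(a_t\delta_t)\big)$ on grading components, and the general case follows by bilinearity and continuity. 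Contractivity then lets me extend $\pi\rtimes v$ from finitely supported functions to all of $F(\alpha,u)$. Non-degeneracy of $\pi\rtimes v$ follows from non-degeneracy of $\pi$, because $C_0(X)\delta_1$ sits inside $F(\alpha,u)$ and $\overline{\pi\rtimes v(C_0(X)\delta_1)E}\supseteq\overline{\pi(C_0(X))E}=E$.

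Next I would treat the \emph{disintegration} direction. Given a non-degenerate representation $\Pi:F(\alpha,u)\to\BB(E)$, I recover $\pi$ by restricting $\Pi$ to the embedded copy of $C_0(X)$ (which is non-degenerate since $C_0(X)$'s approximate unit is an approximate unit of $F(\alpha,u)$). To recover $v$, I use the remark from the excerpt that $C_0(X)$ and $G$ sit as invertible isometries inside $\Mult(F(\alpha,u))$ via $g\mapsto 1_X\delta_g$; since $\Pi$ is non-degenerate and $F(\alpha,u)$ has a contractive approximate unit, $\Pi$ extends uniquely to a unital representation $\overline\Pi:\Mult(F(\alpha,u))\to\BB(E)$, and I set $v_t:=\overline\Pi(1_X\delta_t)$. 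Each $v_t$ is an invertible isometry because $1_X\delta_t$ is one in $\Mult(F(\alpha,u))$ and $\overline\Pi$ is unital and contractive (applying contractivity to both $1_X\delta_t$ and its inverse forces isometry). The covariance relations for $(\pi,v)$ are then obtained by applying $\overline\Pi$ to the identities $1_X\delta_s\cdot 1_X\delta_t=u(s,t)\delta_{st}$ and $(1_X\delta_t)\,a\,(1_X\delta_t)^{-1}=\alpha_t(a)$ holding in $\Mult(F(\alpha,u))$.

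Finally I would verify that the two assignments are mutually inverse: starting from $(\pi,v)$, integrating, and disintegrating returns the same $\pi$ (immediate from the $\delta_1$ component) and the same $v$ (since $\pi\rtimes v(1_X\delta_t)$ extends to $v_t$ on multipliers); conversely, starting from $\Pi$, the recovered pair integrates back to $\Pi$ because both agree on each $a\delta_t=\Pi(a\delta_t)=\pi(a)v_t$ and hence on the dense finitely supported functions, and then everywhere by continuity. The main obstacle I anticipate is not any single computation but the careful handling of the multiplier extension: I must justify that non-degeneracy plus the contractive approximate unit genuinely yields a \emph{unique} unital extension $\overline\Pi$ to $\Mult(F(\alpha,u))$ and that it sends the isometric multipliers $1_X\delta_t$ to isometries of $E$, since—unlike in the $C^*$-case of \cite{Zeller-Meier}—I cannot appeal to $*$-structure or positivity and must argue purely from contractivity of $\overline\Pi$ on both $1_X\delta_t$ and $(1_X\delta_t)^{-1}=1_X\delta_{t^{-1}}u(t,t^{-1})^*$.
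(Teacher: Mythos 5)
Your proposal is correct, but it reaches the disintegration direction by a different organisational route than the paper. The paper constructs $v_t$ directly and analytically: it sets $v_t=\text{s-}\lim_i\psi(\mu_i\delta_t)$ for a contractive approximate unit $(\mu_i)\subseteq C_0(X)$, justifies existence of the strong limit via Cohen--Hewitt factorisation ($\psi(\mu_i\delta_t)\pi(a)\xi=\psi(\mu_i\alpha_t(a)\delta_t)\xi\to\psi(\alpha_t(a)\delta_t)\xi$), verifies the covariance relations by explicit iterated strong limits, and obtains $v_t\in\Iso(E)$ by noting that $v_{t^{-1}}v_t=\overline{\pi}(u(t^{-1},t))$ is invertible, so $v_t$ is an invertible contraction with contractive inverse. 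You instead route everything through the unique unital contractive extension $\overline{\Pi}\colon\Mult(F(\alpha,u))\to \BB(E)$ (a fact the paper records as well known just before the lemma) and set $v_t:=\overline{\Pi}(1_X\delta_t)$, so that covariance becomes pure algebra: you apply $\overline{\Pi}$ to the identities $1_X\delta_s\cdot 1_X\delta_t=u(s,t)\delta_{st}$ and $(1_X\delta_t)a(1_X\delta_t)^{-1}=\alpha_t(a)$, which hold among the isometric multipliers coming from the embedding $\ell^1(G,C_b(X))\hookrightarrow\Mult(F(\alpha,u))$ established in the preliminaries. Your isometry argument (contraction with contractive inverse, using that $\overline{\Pi}$ is contractive on both $1_X\delta_t$ and its inverse) is the same in spirit as the paper's, and indeed the two constructions produce the same operator, since $\overline{\Pi}(1_X\delta_t)=\text{s-}\lim_i\Pi(\alpha_t(\mu_i)\delta_t)$; the analytic content you have ``outsourced'' to the extension theorem is precisely the Cohen--Hewitt strong-limit computation the paper performs by hand. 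Your packaging buys cleaner bookkeeping and reuses the multiplier-algebra structure already set up, at the cost of leaning on the extension theorem as a black box in a Banach-algebra setting where it genuinely needs the contractive approximate unit (you rightly flag this). One small detail worth adding to make the cocycle relation literally read $v_sv_t=\overline{\pi}(u(s,t))v_{st}$: you should check the compatibility $\overline{\Pi}|_{C_b(X)}=\overline{\pi}$, i.e.\ that the multiplier extension of $\Pi$ restricted to $C_b(X)\subseteq\Mult(F(\alpha,u))$ agrees with the multiplier extension of $\pi=\Pi|_{C_0(X)}$; this is a one-line verification on the dense subspace $\pi(C_0(X))E$, not a gap.
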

\begin{proof}
That  $\pi\rtimes v\big(b\big):=\sum_{t\in G}\pi(b(t))v_t$  defines a non-degenerate representation of  $F(\alpha,u)$, for a  covariant representation $(\pi, v)$, is 
 straightforward.
Let $\psi:F(\alpha,u)\to \BB(E)$ be any non-degenerate representation. Then $\pi:=\psi|_{C_0(X)}$ is a non-denenerate representation of $C_0(X)$. For any 
$t\in G$ and any contractive approximate unit $(\mu_i)$ in $C_0(X)$ the strong limit $v_t$ exists and we write $v_t= \text{s-}\lim_{i}\psi(\mu_i\delta_t)$.
Indeed, by the Cohen-Hewitt factorisation theorem, any element in  $E$ has the form $\pi(a)\xi$ for some $\xi\in E$, $a\in C_0(X)$, and
then $\lim_{i}\psi(\mu_i\delta_t)\pi(a)\xi=\lim_{i}\psi(\mu_i \alpha_t(a)\delta_t)\xi=\pi(\alpha_t(a)\delta_t)\xi$. 
This calculation also implies  that 
$v_t\pi(a)=\pi(\alpha_t(a))v_t$ and $v_t$ does not depend on the choice of an approximate unit. For $s,t\in G$ we get
\begin{align*}
v_{s}v_{t}&=\text{s-}\lim_{i}\text{s-}\lim_{j} \psi(\mu_j\delta_{s} \mu_i\delta_{t})=\text{s-}\lim_{i}\text{s-}\lim_{j}
\psi(\alpha_{s}(\mu_i)u(s,t)\mu_j\delta_{st})
\\
&=\text{s-}\lim_{i}\pi(\alpha_{s}(\mu_i)u(s,t))v_{st}=\overline{\pi}(u(s,t))v_{st}.
\end{align*}
In particular, $ v_{t^{-1}} v_{t}=\overline{\pi}(u(t^{-1},t))$ is an invertible element. Thus $v_t$ is in $\Iso(E)$, as an invertible contraction with a contractive inverse.
 Hence $(\pi, v)$ is a covariant representation. Clearly $\psi=\pi\rtimes v$ and $(\pi, v)$ is uniquely determined by this equality. 
\end{proof}
\begin{defn}
Following \cite{DDW}, \cite{Phillips}, for any class  $\RR$ of representations of $F(\alpha,u)$ we denote by $F_{\RR}(\alpha,u)$ the Hausdorff completion of $F(\alpha,u)$ in  the seminorm given by 
$$\|b\|_{\RR}:=\sup\{\|\psi(b)\|:  \psi\in \RR\}.
$$ 
\end{defn}
\begin{defn}
We say that a Banach space $E$ is an \emph{$L^p$-space}, for $p\in[1,\infty]$, if there is an isometric isomorphism $E\cong L^{p}(\mu)$ for some Lebesgue space $L^{p}(\mu)$ on some measure space $(\Omega,\FF,\mu)$. We say that a Banach algebra $B$ is an \emph{$L^p$-operator algebra} if there is an isometric representation $B\to \BB(E)$ into bounded operators on some $L^p$-space $E$. 
Similarly, we define \emph{$C_0$-spaces} as Banach spaces isometrically isomorphic to $C_0(\Omega)$ for some topological space $\Omega$, 
and \emph{$C_0$-operator algebras} as those that can be embedded into $B(C_0(\Omega))$. 
\end{defn}
We have natural injective representations of $F(\alpha,u)$ on $L^p$-spaces (with counting measures) and $C_0$-spaces (of functions on  discrete spaces).
Recall that $\varphi$ is the action of $G$ on $X$ dual to the action  $\alpha$ of $G$ on $C_0(X)$, and $u\in Z^2(G,C_u(X))$ is a fixed (normalised) twist.
\begin{lem}\label{lem:regular_representation}
For each $p\in[1,\infty)$  the formula, for  $\xi \in \ell^p(X\times G)$, $b\in F(\alpha,u)$,
$$
\Lambda_p(b)\xi(x,t)=\sum_{s\in G} b(s)(x)u(s,s^{-1}t)(x)\xi(\varphi_{s^{-1}}(x),s^{-1}t)
$$
defines an injective non-degenerate representation $\Lambda_p:F(\alpha,u)\to B(\ell^p(X\times G))$.
The same formula defines an injective non-degenerate representation $\Lambda_{\infty}:F(\alpha,u)\to B(c_0(X\times G))$.
\end{lem}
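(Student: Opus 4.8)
The plan is to realise $\Lambda_p$ as the integrated form $\pi\rtimes v$ of an explicit covariant representation $(\pi,v)$ of $(\alpha,u)$ on $E=\ell^p(X\times G)$ (resp. $E=c_0(X\times G)$), so that Lemma \ref{lem:integration_disintegration} immediately delivers that $\Lambda_p$ is a non-degenerate representation, and then to prove injectivity by hand using point masses. First I would set $\pi(a)\xi(x,t):=a(x)\xi(x,t)$ for $a\in C_0(X)$ (multiplication in the first coordinate) and
$$
v_g\xi(x,t):=u(g,g^{-1}t)(x)\,\xi(\varphi_{g^{-1}}(x),g^{-1}t),\qquad g\in G.
$$
Here $\pi$ is visibly a non-degenerate contractive representation of $C_0(X)$, since the point masses $\delta_{(x,t)}$ are eigenvectors of $\pi(a)$ with eigenvalue $a(x)$ and $\overline{\pi(C_0(X))E}=E$. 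Each $v_g$ is a weighted composition operator with unimodular weight (as $|u(g,\cdot)(\cdot)|\equiv 1$) along the bijection $(x,t)\mapsto(\varphi_{g^{-1}}(x),g^{-1}t)$ of $X\times G$; this substitution preserves both the $\ell^p$- and the sup-norm, so $v_g$ is an invertible isometry, i.e. $v_g\in\Iso(E)$.

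The main computational step is to verify the two covariance identities, and this is where I expect the real work to lie. The relation $v_t\pi(a)=\pi(\alpha_t(a))v_t$ is immediate by comparing the two sides pointwise, using $\alpha_t(a)=a\circ\varphi_{t^{-1}}$. The identity $v_sv_t=\overline{\pi}(u(s,t))v_{st}$ is the crux. Expanding $v_sv_t\xi(x,t')$ and using $\varphi_{t^{-1}}\varphi_{s^{-1}}=\varphi_{(st)^{-1}}$ together with $u(t,\cdot)(\varphi_{s^{-1}}(x))=\alpha_s(u(t,\cdot))(x)$, the claim reduces to the pointwise identity
$$
u(s,s^{-1}t')\,\alpha_s\!\big(u(t,(st)^{-1}t')\big)=u(s,t)\,u(st,(st)^{-1}t'),
$$
which is precisely the cocycle identity \eqref{eq:cocycle_identity} after substituting its three arguments by $s$, $t$ and $(st)^{-1}t'$, upon noting that $t\cdot(st)^{-1}t'=s^{-1}t'$ and that $C_u(X)$ is abelian. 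Thus $(\pi,v)$ is covariant, and a direct comparison of the defining sum $\pi\rtimes v(b)=\sum_{t}\pi(b(t))v_t$ with the given formula shows $\pi\rtimes v=\Lambda_p$. In particular $\|\Lambda_p(b)\|\le\sum_{s}\|b(s)\|=\|b\|_1$, and Lemma \ref{lem:integration_disintegration} gives that $\Lambda_p$ is a non-degenerate representation.

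It remains to establish injectivity, which I would do directly. Fix $b\in F(\alpha,u)$ and apply $\Lambda_p(b)$ to the point mass $\delta_{(y,1)}\in E$, where $1$ is the unit of $G$. In the defining sum only the term $s=t$ with $x=\varphi_t(y)$ survives, and using the normalisation $u(t,1)=1$ one finds
$$
\big(\Lambda_p(b)\delta_{(y,1)}\big)(\varphi_t(y),t)=b(t)(\varphi_t(y)),\qquad y\in X,\ t\in G.
$$
Hence $\Lambda_p(b)=0$ forces $b(t)(\varphi_t(y))=0$ for all $y$ and $t$; since each $\varphi_t$ is onto $X$, this yields $b(t)\equiv 0$ for every $t$, i.e. $b=0$. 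Finally, the $p=\infty$ case is handled by the same argument on $c_0(X\times G)$: the operators $\pi(a)$ and $v_g$ manifestly preserve $c_0$ and are sup-norm (invertible) isometries, the series $\sum_s\pi(b(s))v_s$ still converges in operator norm, and the point masses used for injectivity lie in $c_0$, so no change is needed.
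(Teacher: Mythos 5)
Your proposal is correct and takes essentially the same approach as the paper: the same covariant pair $(\pi,v)$, covariance verified pointwise via the cocycle identity \eqref{eq:cocycle_identity}, integration via Lemma \ref{lem:integration_disintegration}, and injectivity tested on point masses. The only cosmetic difference is in the injectivity step, where you apply $\Lambda_p(b)$ to $\delta_{(y,1)}$ and use the normalisation $u(t,1)=1$, while the paper acts on general basis vectors $\mathds{1}_{x,t}$ and invokes linear independence of the images together with non-vanishing of the twist --- both computations are equivalent.
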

\begin{proof} Routine calculations show that the formulas, for $a\in C_0(X)$, $s,t\in G$ and $x\in X$,
$$
\pi(a)\xi(x,t)=a(x)\xi(x,t), \quad v_s \xi(x,t)=u(s,s^{-1}t)(x)\xi(\varphi_{s^{-1}}(x),s^{-1}t)
$$
define a covariant representation of $(\alpha,u)$ on $\ell^p(X\times G)$ or $c_0(X\times G)$. For instance, 
\begin{align*}
v_s v_t\xi(x,r)&=u(s,s^{-1}r)(x)u(t,(st)^{-1}r)(\varphi_{s^{-1}}(x))\xi(\varphi_{(st)^{-1}}(x),(st)^{-1}r) 
\\
&\stackrel{\eqref{eq:cocycle_identity}}{=}u(s,t)(x) u(st,(st)^{-1}r)(\varphi_{(st)^{-1}}(x))\xi(\varphi_{(st)^{-1}}(x),(st)^{-1}r) =\overline{\pi}(u(s,t))v_{st}\xi(x,r).
\end{align*}
Clearly, $\Lambda_p=\pi\rtimes v$. To see that $\Lambda_p$ is injective, let $\{\mathds{1}_{x,t}\}_{(x,t)\in X\times G}$ be the standard Schauder basis in $\ell^p(X\times G)$, 
and note that, for $a\in C_0(X)$, we have
$$\pi(a)\mathds{1}_{x,t}= a(x) \mathds{1}_{x,t} \quad \text{ and }\quad  v_s \mathds{1}_{x,t}=u(s,t)(\varphi_s(x))\mathds{1}_{\varphi_s(x),st}.
$$ 
If  $\Lambda_p(\sum\limits_{s\in G} a_s\delta_s)=0$, then for all $(x,t)\in X\times G$  we have
$$
\Lambda_{p}\Big(\sum\limits_{s\in G} a_s\delta_s\Big)\mathds{1}_{x,t}=\sum\limits_{s\in G} a_s(\varphi_s(x))u(s,t)(\varphi_s(x))\mathds{1}_{\varphi_s(x),st}=0.
$$
Since the elements $\{\mathds{1}_{\varphi(x),st}\}_{s\in G}$ are linearly independent and $u(s,t)(\varphi_s(x))$ are non-zero numbers, this implies 
 that $a_s(\varphi_s(x))=0$ for all $s\in G$ and $x\in X$. Hence $\sum\limits_{s\in G}a_s\delta_s=0$.
\end{proof}
\begin{defn}\label{def:L^p_crossed_products}
For $p\in [1,\infty)$ we define the (full) \emph{$L^p$-operator algebra crossed product} $F^{p}(\alpha,u)$ to be $F_{\RR}(\alpha,u)$ where $\RR$ is the class of all non-degenerate representations on  $L^p$-spaces. Thus $F^{p}(\alpha,u)$ is the completion of $F(\alpha,u)$ in the norm
$$
\|b\|_{L^p}=\{\|\psi(b)\|: \psi \text{ is a non-degenerate representation of $F(\alpha,u)$ on an $L^p$-space}\}.
$$
The \emph{reduced $L^p$-operator algebra crossed product} is $F^{p}_{\red}(\alpha,u):=F_{\{\Lambda_p\}}(\alpha,u)$.
Thus we may  identify $F^p_{\red}(\alpha,u)$ with $\overline{\Lambda_p(F(\alpha,u))}\subseteq B(\ell^{p}(X\times G))$.
Similarly, we define $F^{\infty}(\alpha,u)$ where $\RR$ is the class of all non-degenerate representations on $C_0$-spaces, 
and $F^{\infty}_{\red}(\alpha,u):=F_{\{\Lambda_{\infty}\}}(\alpha,u)$.
\end{defn}

\begin{rem}\label{rem:consequences_of_the_groupoid_paper}
 It follows from the results of \cite{BKM1} (using the groupoid model for the above crossed products) that we get the same norms and Banach algebras allowing degenerate representations, and then
$F^{\infty}(\alpha,u)$ is also universal for representations on $L^{\infty}$-spaces. 
For $p=2$ the full and reduced crossed products coincide with the standard $C^*$-algebraic crossed products  $C^*(\alpha,u)$ and $C^*_{\red}(\alpha,u)$. It also follows from \cite{BKM1} that for $b\in F(\alpha,u)$ we have
\begin{equation}\label{eq:norms_for_1_and_infty}
 \|b\|_{L^1}=\|\Lambda_1(b)\|=\max_{x\in X} \sum_{t\in G}|b(t)(x)|,\quad  \|b\|_{L^\infty}=\|\Lambda_{\infty}(b)\|=\max_{x\in X} \sum_{t\in G}|b(t)(\varphi_{t}(x))|,
\end{equation}
and if $1/p+1/q=1$, then $\|b\|_{L^p}\leq \|b\|_{L^1}^{\frac{1}{p}}\|b\|_{L^{\infty}}^{\frac{1}{q}}$ and $F^{p} (\alpha,u)\stackrel{anti}{\cong}F^{q} (\alpha,u)$ and $F^{p}_{\red} (\alpha,u)\stackrel{anti}{\cong}F^{q}_{\red} (\alpha,u)$ where
the isometric anti-isomorphisms are induced by the involution in  $F(\alpha,u)$. Thus we have the following hierarchy of the introduced crossed products each of which is a  completion of  $F(\alpha,u)$:

$$
\xymatrixcolsep{1pc} \xymatrixrowsep{1pc} 
\xymatrix{
 &  &  F(\alpha,u)  \ar@{->}[lldd]  \ar@{->}[ldd] \ar@{->}[ddr]  \ar@{->}[ddrr]&  &  
\\
 &  &  &  &
\\  
F^1(\alpha,u) \ar@{=}[d] \ar@{.}[r] & F^{p} (\alpha,u) \ar@{->}[d]  \ar@{.}[r]   \ar@/^2pc/[rr]^{\stackrel{anti}{\cong}} & F^{2} (\alpha,u)= C^*(\alpha,u)   \ar@{.}[r] 
 & F^{q} (\alpha,u) \ar@/_2pc/[ll]  \ar@{->}[d] \ar@{.}[r] & 
 F^\infty(\alpha,u)  \ar@{=}[d] 
\\
F^1_{\operatorname {r}}(\alpha,u) \ar@{.}[r] & F^{p}_{\operatorname {r}} (\alpha,u)  \ar@{.}[r]   \ar@/_2pc/[rr]  & F^2_{\operatorname {r}} (\alpha,u)= C^*_{\operatorname {r}}(\alpha,u)   \ar@{.}[r]  &  F^{q}_{\operatorname {r}} (\alpha,u)   \ar@/^2pc/[ll]_{ \stackrel{anti}{\cong}} \ar@{.}[r] & 
 F^\infty_{\operatorname {r}}(\alpha,u)  .
}
$$ 
Here the downward arrows are representations which are identities on $F(\alpha,u)$, the algebras in the middle column are Banach $*$-algebras and the horizontal anti-isomorphims are given by
the involution in $F(\alpha,u)$.
\end{rem}
We also mention a generalisation of the trivial representation of a group that only  works in the untwisted case. It  plays a crucial role 
in characterisation of the generalised intersection property for untwisted crossed products (see Theorem \ref{thm:generalised_intersection_property} below),
and indirectly it is also used in the proof of the corresponding theorem for  twisted crossed products (Theorem \ref{thm:twisted_generalised_intersection_property}).

\begin{ex}\label{ex:trivial_rep}
The formulas 
$
\pi(a)\xi(x):=a(x)\xi(x)$ and  $v_t \xi(x):=\xi(\varphi_{t^{-1}}(x))
$
define a covariant representation of $\alpha$ on each of the spaces $\ell^{p}(X)$, $p\in[1,\infty)$, and $c_0(X)$. We denote their integrated forms by  $\Lambda_{p}^{\tr}:F^p(\alpha)\to B(\ell^{p}(X))$, $\Lambda_{\infty}^{\tr}:F^p(\alpha)\to B(c_0(X))$ and call them \emph{$C_0(X)$-trivial representations} of $F(\alpha)$. Explicitly,
\begin{equation}\label{eq:trivial_rep}
\Lambda_p^{\tr}(b)\xi(x)=\sum_{s\in G} b(s)(x)\xi(\varphi_{s^{-1}}(x)).
\end{equation}
\end{ex}

\section{Reduced $L^p$-crossed products and amenable actions}
Definition \ref{def:L^p_crossed_products} agrees with notions introduced by Phillips in \cite{Phillips} for untwisted crossed products, which were  extended to the non-separable setting in \cite{cgt} and \cite{Gardella}. 
Reduced crossed products in these sources are defined using induced representations. We explain the equivalence between the two approaches. Let $(\alpha,u)$ be a twisted action  on $C_0(X)$.
For any non-degenerate representation $\pi:C_0(X)\to \BB(E)$ on a Banach space $E$ we have
a covariant representation of $(\alpha, u)$ on $\ell^{p}(G, E)$, $p\in [1,\infty)$, given by 
$$
\widetilde{\pi}(a)\xi(t):=\pi(\alpha_{t}^{-1}(a))\xi (t),\quad  \widetilde{v}_s \xi(t):=\overline{\pi}(\alpha_{t}^{-1}(u(s,s^{-1}t)))\xi(s^{-1}t),
$$
$a\in C_0(X)$, $\xi\in  \ell^{p}(G, E)$. We call $\Ind_p(\pi):=\widetilde{\pi}\rtimes \widetilde{v}: F(\alpha,u)\to B(\ell^{p}(G, E))$ the \emph{$\ell^p$-representation induced by $\pi$}. 
Explicitly, for $b\in F(\alpha,u)$ we have
$$
\Ind_p(\pi)(b)\xi (t)=\sum_{s\in G} \pi(\alpha_{t}^{-1}(b(s)u(s,s^{-1}t)))\xi(s^{-1}t).
$$
Note that if $m_p:C_0(X)\to B(\ell^p(X))$ is given by multiplication: $m_p(a)\xi (x):=a(x)\xi(x)$, then the representations $\Ind_p(m_p)$ and $\Lambda_p$ are equivalent (conjugated by 
the invertible isometry $U\in B(\ell^p(G,\ell^{p}(X))$ given by $U\xi(t)(x):=\xi(t)(\varphi_t(x))$).
Similarly,  one may define $\Ind_{\infty}(\pi)=\pi\rtimes v: F(\alpha,u)\to B(c_0(G, E))$ and then $\Ind_{\infty}(m_{\infty})\cong\Lambda_{\infty}$
where $m_{\infty}:C_0(X)\to B(c_0(X))$ is given by $m_{\infty}(a)\xi (x):=a(x)\xi(x)$. 
We now generalise \cite[Proposition 6.4]{cgt} where $X$ is compact, $p\in[1,\infty)\setminus\{2\}$, and there is no twist.
\begin{prop}\label{prop:reduced_norm_is_induced}
 For any $p\in[1,\infty)$ and $b\in F(\alpha,u)$ we have
$$
\|\Lambda_p(b)\|=\max\{\|\Ind_p(\pi)(b)\|: \pi \text{ is a non-degenerate representation on an }L^p\text{-space}\},
$$
and $\|\Lambda_{\infty}(b)\|=\max\{\|\Ind_{\infty}(\pi)(b)\|: \pi \text{ is a non-degenerate representation on a }C_0\text{-space}\}$.
\end{prop}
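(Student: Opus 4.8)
The plan is to prove the two inequalities separately and to identify which representation attains the maximum. The key structural fact, already recorded above, is that the regular representation $\Lambda_p$ is equivalent to $\Ind_p(m_p)$, where $m_p\colon C_0(X)\to \BB(\ell^p(X))$ is the multiplication representation. Since $m_p$ is a non-degenerate representation on the $L^p$-space $\ell^p(X)$, it is an admissible $\pi$, so the equality $\|\Lambda_p(b)\|=\|\Ind_p(m_p)(b)\|$ shows both that the right-hand side is at least $\|\Lambda_p(b)\|$ and that it is attained. Thus the whole content of the proposition is the reverse domination: for every non-degenerate representation $\pi$ of $C_0(X)$ on an $L^p$-space one has $\|\Ind_p(\pi)(b)\|\le\|\Lambda_p(b)\|$.

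To prove this I would first rewrite $\|\Lambda_p(b)\|$ as a supremum over points. Writing $\mathrm{ev}_x\colon C_0(X)\to\C$ for the character $a\mapsto a(x)$, the multiplication representation decomposes as the $\ell^p$-direct sum $m_p=\bigoplus^{\ell^p}_{x\in X}\mathrm{ev}_x$ (counting measure on $X$). Since induction is compatible with $\ell^p$-direct sums and the norm of an $\ell^p$-direct sum of operators is the supremum of the norms of the summands, this gives $\|\Lambda_p(b)\|=\|\Ind_p(m_p)(b)\|=\sup_{x\in X}\|\Ind_p(\mathrm{ev}_x)(b)\|$. The plan for a general $\pi$ is then to express $\Ind_p(\pi)(b)$ as a field of the fibre operators $\Ind_p(\mathrm{ev}_x)(b)$. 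By the structure theory of representations of commutative $C^*$-algebras on $L^p$-spaces — the spectral theorem for $p=2$ and its $L^p$-analogue (a Lamperti/multiplication theorem) for $p\ne 2$ — any non-degenerate $\pi$ on $E$ is isometrically equivalent to a multiplication representation: $E\cong L^p(\Omega,\mu)$ and $\pi(a)=M_{a\circ f}$ for a measurable map $f\colon\Omega\to X$.

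With this normal form in hand I would use the Fubini-type isometric identification $\ell^p(G,E)\cong L^p(G\times\Omega)$ and compute $\Ind_p(\pi)(b)$ directly. Because $\pi(\alpha_t^{-1}(a))$ acts as multiplication by $\omega\mapsto a(\varphi_t(f(\omega)))$, the operator $\Ind_p(\pi)(b)$ preserves the $G$-variable fibres and acts on the fibre over $\omega\in\Omega$ exactly as $\Ind_p(\mathrm{ev}_{f(\omega)})(b)$ on $\ell^p(G)$; that is, $\Ind_p(\pi)(b)$ is the decomposable operator $\int^{\oplus}_{\Omega}\Ind_p(\mathrm{ev}_{f(\omega)})(b)\,d\mu(\omega)$. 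Since the norm of a decomposable operator on $L^p(\Omega;\ell^p(G))$ equals the essential supremum of the norms of its fibres, I conclude
\[
\|\Ind_p(\pi)(b)\|=\operatorname*{ess\,sup}_{\omega\in\Omega}\|\Ind_p(\mathrm{ev}_{f(\omega)})(b)\|\le\sup_{x\in X}\|\Ind_p(\mathrm{ev}_x)(b)\|=\|\Lambda_p(b)\|,
\]
which is the desired domination. The case $p=\infty$ is handled in exactly the same way, replacing $L^p$-spaces by $C_0$-spaces, $m_p$ by $m_\infty$, the $\ell^p$-sums by $c_0$-sums, and the fibrewise essential supremum by the corresponding supremum over the $C_0$-fibres.

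I expect the main obstacle to be the step invoking the structure theorem: ensuring that \emph{every} non-degenerate representation of $C_0(X)$ on an $L^p$-space (for all $p\in[1,\infty)$ simultaneously, and on a $C_0$-space for $p=\infty$) really is spatially a multiplication representation over a measurable field of characters, and that the resulting field $\omega\mapsto\Ind_p(\mathrm{ev}_{f(\omega)})(b)$ is measurable with the claimed decomposable norm. For $p\ne 2$ this rests on the $L^p$-multiplication theorem, and some care is needed at $p=1$ and in the non-separable setting; the remaining computations (the Fubini identification and the fibrewise form of $\Ind_p(\pi)(b)$) are routine.
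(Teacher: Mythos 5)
Your reduction is the right one, and your fibrewise estimate is exactly how the paper bounds $\|\Ind_p(\mu)(b)\|$ for multiplication representations: indeed $\Lambda_p\cong\Ind_p(m_p)\cong\bigoplus_{x}\Ind_p(\delta_x)$ (with $\delta_x$ the point mass at $x$), and a Fubini-type computation gives the upper bound by $\sup_{x}\|\Ind_p(\delta_x)(b)\|$. But your central step --- that \emph{every} non-degenerate representation $\pi$ of $C_0(X)$ on an $L^p$-space is spatially of the form $\pi(a)=M_{a\circ f}$ for a \emph{measurable point map} $f\colon\Omega\to X$ --- is a genuine gap for $p\in[1,\infty)\setminus\{2\}$ in the generality the proposition demands. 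What the Lamperti-type multiplication theorem actually yields (this is Theorem~2.14 of [BKM23], used elsewhere in the paper) is a homomorphism $\pi_0\colon C_0(X)\to L^{\infty}(\mu)$ with $\pi(a)\xi=\pi_0(a)\cdot\xi$, i.e.\ a map into the measure algebra, not a point map. Lifting $\pi_0$ to $f\colon\Omega\to X$ with $\pi_0(a)=a\circ f$ is a point-realization problem, solvable when the measure space is standard and $C(X)$ separable, but not in general: the proposition assumes neither second countability of $X$ nor separability of $L^p(\nu)$. (For $p=2$ your route does work, since one can split off cyclic summands and realize each as $L^2(X,\mu_i)$ by Riesz representation; for $p\neq 2$ there are no complemented cyclic subspaces, which is exactly why this case is harder --- the paper simply quotes the classical $C^*$-fact for $p=2$.)

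This gap is precisely where the paper spends its effort. After disposing of $p=\infty$ (where the structure theorem for representations on $C_0(\Omega)$-spaces does hold without any separability, via a continuous $\Phi\colon\Omega\to X$, so your argument is sound there) and $p=2$, the paper runs a separabilization argument for $p\in[1,\infty)\setminus\{2\}$: fix $b$ and an almost-norming vector $\xi$, pass to the countable subgroup $G'$ generated by the support of $\xi$, to a separable $G'$-invariant unital subalgebra $A\cong C(X')$ (after adjoining a unit, so $X$ may be taken compact), and --- by Phillips' Proposition~1.25 --- to a separable $\pi(A)$-invariant subspace $F\subseteq L^p(\nu)$ which is itself an $L^p$-space, hence $F\cong L^p(\nu')$ with $\nu'$ standard. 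Only then does the key norm identity $\|\Ind_p(\pi')(b)\|=\|\Ind_p(\mu)(b)\|$, with $\mu:=\nu'\circ\pi'$ (Lemma~6.3 of [CGT19]), become available; your fibre estimate then finishes the job, after one also compares $\Ind_p(\delta_{x'})$ with $\Ind_p(\delta_x)$ along the quotient map $X\onto X'$ to return from $\Lambda_p'$ to $\Lambda_p$. So your skeleton matches the paper's, but the step you flag as ``the main obstacle'' is not a technicality to be waved at: as stated it fails in the non-separable setting, and repairing it requires this reduction machinery (or an equivalent substitute).
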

\begin{proof}  Let $b\in F(\alpha,u)$.
 Since $\Lambda_p\cong \Ind_p(m_p)$ we only need to show that  $\|\Ind_p(\pi)(b)\| \leq \|\Lambda_p(b)\|$ for any non-degenerate $\pi$ on an $L^p$-space or $C_0$-space when $p=\infty$. Let us first consider the case when $p=\infty$, and let $\pi:C_0(X)\to B(C_0(\Omega))$ be a non-degenerate representation where $\Omega$ is some locally compact Hausdorff space. By \cite[Theorem 2.16]{BKM1}  there is a continuous map $\Phi:\Omega\to X$ such that  $\pi(a)\xi (y)=a(\Phi(y))\xi(y)$ for $\xi \in C_0(\Omega)$,  $y\in \Omega$ and $a\in C_0(X)$.
 Using this and \eqref{eq:norms_for_1_and_infty}, for $\xi\in C_0(G, C_0(\Omega))$,  we get
\begin{align*}
\| \Ind_{\infty}(\pi)(b)\xi\|&= \max_{y\in \Omega, t\in G} \left|\sum_{s\in G}[b(s)u(s,s^{-1}t)](\varphi_{t}(\Phi(y))) \xi(s^{-1}t)(y)\right| 
\\
&\leq \max_{x\in X} \sum_{s\in G} \left| [b(s)u(s,s^{-1}t)](\varphi_t(x))\right| \max_{y\in \Omega, t\in G} \left| \xi(t)(y)\right|=
\|\Lambda_{\infty}(b)\| \|\xi\|,
\end{align*}
and so $\|\Ind_{\infty}(\pi)(b)\| \leq \|\Lambda_{\infty}(b)\|$. 
For $p=2$ the assertion  is a well known fact, see \cite[Theorem 4.11]{Zeller-Meier}. 
Hence it suffices to consider the case when  $p\in [1,\infty)\setminus \{2\}$. 
Let us first consider a probability Borel  measure $\mu$ on $X$ and a representation $m_p^{\mu}:C_0(X)\to B(L^p(\mu))$ 
given by multiplication $m_p^{\mu}(a)\xi (x):=a(x)\xi(x)$, $\xi\in L^p(\mu)$. 
Denote by 
$\Ind_p(\mu)=\Ind_p(m_p^{\mu})$ the corresponding representation of $F(\alpha,u)$. Then $\Ind_p(\delta_x)$, where $\delta_x$ is the point mass measure at $x\in X$, can be treated as a
representation on $\ell^{p}(G)$. Moreover,  $\Lambda_p\cong\Ind_p(m_p)\cong \bigoplus_{x\in X/G} \Ind_p(\delta_x)$ where $X/G$ is the orbit space for the action $\varphi$.
Hence for any $\xi\in \ell^{p}(X,L^p(\mu))$,  putting $\xi^x(t)=\xi(t)(x)$   for  $x\in X$, we get 
\begin{align*}
\| \Ind_p(\mu)(b)\xi\|^p&= \int_{X}\sum_{t\in G} \left|\sum_{s\in G}[b(s)u(s,s^{-1}t)](\varphi_{t}(x))\xi(s^{-1}t)(x)\right|^{p} d\mu
\\
&=  \int_{X} \|\Ind_p(\delta_x)(b)\xi^x\|_{\ell^{p}(G)}^p\, d\mu
\\
&\leq \sup_{x\in X} \|\Ind_p(\delta_x)(b)\|^p \int_{X} \|\xi^x\|_{\ell^{p}(G)}^p\, d\mu =\|\Lambda_p(b)\|^p \|\xi\|^p.
\end{align*}
Thus $\| \Ind_p(\mu)(b)\|\leq \|\Lambda_p(b)\|$. Let now $\pi:C_0(X)\to B(L^{p}(\nu))$ be any non-degenerate representation on some measure space $(Y,\nu)$.
By passing to the extension of the action $\alpha$ and representation $\pi$ to the algebra $\C1+C_0(X)\subseteq C_{b}(X)$ we may assume that $X$ is compact. 
For every $\varepsilon>0$ there is $\xi \in \ell^{p}(G,L^{p}(\nu))$  such that
$$ 
\|\xi\|=1\,\, \text{  and }\,\, \|\Ind_p(\pi)(b)\|-\varepsilon\leq \|\Ind_p(\pi)(b)\xi \|.
$$
Since the set $\{t\in G: \xi(t)\neq 0\}$ is countable, it generates a countable subgroup $G'$ of $G$. 
Since $\{s\in G: b(s)\neq 0\}$ is countable,  the $C^*$-subalgebra $A$ of $C(X)$ generated by 
$ \{\alpha_t(b(s)): s\in G,t\in G'\}\cup \{u(s,t): s,t\in G'\}$ is separable, unital and  the twisted action $(\alpha,u)$ of $G$ on $C(X)$ restricts to a twisted action $(\alpha', u')$ of $G'$ on $A$. Write  $A\cong C(X')$ for a  compact Hausdorff $X'$, and note that $\pi$ restricts to a unital representation 
$\pi:C(X')\to B( L^{p}(\nu))$. By the proof of \cite[Proposition 1.25]{Phillips}
 there is a separable Banach subspace $F\subseteq L^{p}(\nu)$   such that $F$ is an $L^p$-space, $\pi(C(X'))F\subseteq F$ and  $\{\xi(t):t\in G\}\subseteq F$. 
As $F$ is a separable $L^p$-space, there is  a standard Borel probability measure $\nu'$ such that $F\cong L^p(\nu')$. We identify $F=L^p(\nu')$. 
Then  $\pi$ yields a unital representation    $\pi':C(X')\to B(L^{p}(\nu'))$. By construction $\xi\in \ell^{p}(G',L^{p}(\nu'))\subseteq \ell^{p}(G,L^{p}(\nu))$, $b\in F(\alpha',u')$ and
$\Ind_{p}(\pi)(b)\xi=\Ind_{p}(\pi')(b)\xi$. 
Let $\mu$ be a Borel probability measure on $X'$ such that $\mu(a)=\nu'(\pi'(a))$ for $a\in C(X')$.
By (the proof of) \cite[Lemma 6.3]{cgt} we have $\|\Ind_p(\pi')(b)\|=\|\Ind_p(\mu)(b)\|$.
Hence
$$
\|\Ind_{p}(\pi)(b)\xi\|=\|\Ind_{p}(\pi')(b)\xi\|\leq  \| \Ind_p(\mu)(b)\|\leq \|\Lambda_p'(b)\|
$$
where $\Lambda_p':F(\alpha',u')\to \ell^{p}(G',L^{p}(\nu'))$ is the regular representation (see the previous step).
Since  $\|\Lambda_p'(b)\|=\sup_{x'\in X'/G'} \| \Ind_p(\delta_{x'})\|$ there is $x' \in X'$ with 
$\|\Lambda_p'(b)\|-\varepsilon \leq \| \Ind_p(\delta_{x'})(b)\|$. Taking any preimage $x\in X$ of $x'$ under the quotient map $X\onto X'$,
one verifies that $\|\Ind_p(\delta_{x'})(b)\|=\|\Ind_p(\delta_{x})(b)\|$. Consequently, gathering all the pieces together, we get
$$
\|\Ind_p(\pi)(b)\|-\varepsilon\leq \|\Ind_p(\pi)(b)\xi \|\leq \|\Lambda_p'(b)\|\leq \|\Lambda_p(b)\| + \varepsilon,
$$ 
which implies that $\|\Ind_p(\pi)(b)\|\leq \|\Lambda_p(b)\|$.
\end{proof}

\begin{defn}
For any $p\in[1,\infty]$, the representation from Lemma \ref{lem:regular_representation} extends to a representation
$\Lambda_p:F^{p}(\alpha,u)\to F^p_{\red}(\alpha,u)$ that we call the \emph{regular representation}. 
\end{defn}
 By Remark \ref{rem:consequences_of_the_groupoid_paper},  
$\Lambda_1$ and $\Lambda_{\infty}$ are identity maps (isometric isomorphisms) but for $p\in (1,\infty)$ the regular representation $\Lambda_p:F^{p}(\alpha,u)\to F^p_{\red}(\alpha,u)$ is not injective in general.
Here we give a detailed proof that all $\Lambda_p$'s are isometric for amenable actions. This was stated without a proof for non-twisted actions on compact spaces in \cite[Lemma 7.5]{cgt}.
 
\begin{defn}[{\cite[Proposition 2.2(3)]{Anantharaman-Delaroche}}] An action  $\varphi:G\to \Homeo(X)$ is \emph{amenable}  if  there exists an \emph{approximate invariant mean}, that is a
net $(f_i)_{i\in I}$ of 
finitely supported non-negative functions $f_i:G\to  C_c(X)^+$ such that  the net $\sum_{t\in G} f_i(t)$ converges compactly to $1$ and the net $\sum_{t\in G} |\alpha_{s}(f_i(t))- f_{i}(st)|$  converges compactly  to $0$, for every $s\in G$   (\emph{compact convergence} means uniform convergence on compact subsets).
\end{defn}
\begin{lem}\label{lem:p_invariant_means} 
Let $p,q\in (1,\infty)$ with $1/p+1/q=1$. 
If the action $\varphi$ is amenable, then there are nets  $(g_i)_{i\in I}$, $(h_i)_{i\in I} \subseteq C_c(G,C_c(X)^+)$ such that 
each of the  nets 
$\sum_{t\in G} g_i(t)^p$, $\sum_{t\in G} h_i(t)^q$  and  $\sum_{t\in G} h_{i}(st)\alpha_{s}\big(g_i(t)\big)
$, $s\in G$, 
converge compactly to $1$.
\end{lem}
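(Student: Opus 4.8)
The plan is to produce $g_i$ and $h_i$ directly from the approximate invariant mean $(f_i)$ by taking pointwise roots. Concretely, for the given $p,q$ I would set $g_i := f_i^{1/p}$ and $h_i := f_i^{1/q}$, meaning $g_i(t)(x) := f_i(t)(x)^{1/p}$ and $h_i(t)(x) := f_i(t)(x)^{1/q}$. Since $r\mapsto r^{1/p}$ is continuous on $[0,\infty)$, vanishes only where $r=0$, and preserves positivity, each $g_i(t),h_i(t)$ lies in $C_c(X)^+$ with the same support as $f_i(t)$; as $f_i$ is finitely supported in $G$, this gives $g_i,h_i\in C_c(G,C_c(X)^+)$. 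The first two required convergences are then immediate, since $g_i(t)^p=h_i(t)^q=f_i(t)$ forces $\sum_t g_i(t)^p=\sum_t h_i(t)^q=\sum_t f_i(t)$, which converges compactly to $1$ by the defining property of the mean.

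The only real content is the third (twisted) condition. Fix $s\in G$ and a point $x\in X$, and abbreviate $a_t:=\alpha_s(f_i(t))(x)\ge 0$ and $b_t:=f_i(st)(x)\ge 0$. Since $\alpha_s$ acts by composition with $\varphi_{s^{-1}}$, it commutes with the pointwise root, so the value at $x$ of the $t$-th summand of $\sum_t h_i(st)\,\alpha_s(g_i(t))$ equals $a_t^{1/p}b_t^{1/q}$. I would compare this sum with $\sum_t b_t$. Writing $b_t=b_t^{1/q}b_t^{1/p}$ and using the subadditivity estimate $|a^{1/p}-b^{1/p}|\le |a-b|^{1/p}$ for the concave root, one obtains the pointwise bound $|a_t^{1/p}b_t^{1/q}-b_t|=b_t^{1/q}|a_t^{1/p}-b_t^{1/p}|\le b_t^{1/q}|a_t-b_t|^{1/p}$. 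Summing over $t$ and applying H\"older's inequality with exponents $q,p$ yields
\[
\sum_{t\in G}\bigl|a_t^{1/p}b_t^{1/q}-b_t\bigr|\le\Bigl(\sum_{t\in G} b_t\Bigr)^{1/q}\Bigl(\sum_{t\in G}|a_t-b_t|\Bigr)^{1/p}.
\]

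To finish I would run this estimate uniformly over a compact set $K\subseteq X$. Note that $\sum_t b_t=\sum_t f_i(st)=\sum_{t'} f_i(t')$ by reindexing, hence converges compactly to $1$ and is therefore uniformly bounded on $K$ for large $i$, while $\sum_t|a_t-b_t|$ is exactly $\bigl(\sum_t|\alpha_s(f_i(t))-f_i(st)|\bigr)(x)$, which converges compactly to $0$ by the invariance condition. Thus the right-hand side above tends to $0$ uniformly on $K$, and combined with $\sum_t b_t\to 1$ this gives $\sum_t h_i(st)\,\alpha_s(g_i(t))\to 1$ compactly, as required. The step I expect to be most delicate is not any single inequality but keeping all three convergences simultaneously uniform on compact sets; the H\"older/root estimate is precisely what upgrades the $\ell^1$-type invariance of $(f_i)$ into the $\ell^p$--$\ell^q$ invariance of $(g_i),(h_i)$, and centering the comparison at $b_t$ rather than $a_t$ is what lets me avoid the extra argument that $\sum_t\alpha_s(f_i(t))\to 1$ compactly.
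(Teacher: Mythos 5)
Your proposal is correct and follows essentially the same route as the paper: the same choice $g_i=f_i^{1/p}$, $h_i=f_i^{1/q}$, the same centering of the comparison at $f_i(st)$, and the same combination of the root/power inequality (your $|a^{1/p}-b^{1/p}|\le|a-b|^{1/p}$ is the paper's $|a-b|^p\le|a^p-b^p|$ in disguise) with H\"older's inequality for exponents $p,q$. The only difference is cosmetic notation ($a_t,b_t$ at a fixed point versus the paper's function-level estimate), so there is nothing to flag.
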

\begin{proof}
If $(f_i)_{i\in I}$ is an approximate invariant mean, then $g_i(t):=f_i(t)^{\frac{1}{p}}$ and $h_i(t)=f_i(t)^{\frac{1}{q}}$
are the desired nets. Indeed, using H\"olders inequality, and that $|a-b|^p\leq |a^p-b^{p}|$ for $a,b\geq 0$, for any $s\in G$   we get
\begin{align*}
\left| \sum_{t\in G} \alpha_{s}(g_i(t))h_{i}(st) -\sum_{t\in G} f_i(t)\right|
&\leq 
\sum_{t\in G} \left|\alpha_{s}(f_i(t)^{\frac{1}{p}})f_{i}(st)^{\frac{1}{q}} - f_i(st)\right|
\\
&=
\sum_{t\in G} \left|\alpha_{s}(f_i(t)^{\frac{1}{p}}) -f_i(st)^{\frac{1}{p}}\right| f_{i}(st)^{\frac{1}{q}}
\\
&\leq\left(\sum_{t\in G}\left| \alpha_{s}(f_i(t)^{\frac{1}{p}}) -f_i(st)^{\frac{1}{p}}\right|^p\right)^{\frac{1}{p}}   \left(\sum_{t\in G} f_{i}(st) \right)^{\frac{1}{q}}
\\
&\leq \left(\sum_{t\in G} \left|\alpha_{s}(f_i(t)) -f_i(st)\right|\right)^{\frac{1}{p}}\left(\sum_{t\in G} f_{i}(t) \right)^{\frac{1}{q}}. 
\end{align*}
As $\sum_{t\in G} f_{i}(t) $ converges compactly to $1$, the above estimate implies that $\sum_{t\in G} \alpha_{s}(g_i(t))h_{i}(st)$ also  converges compactly to $1$.
\end{proof}
\begin{thm}\label{thm:ambenability_reduced_equals_full}
If the action $\varphi$ is amenable, then for any $p\in (1,\infty)$ the regular representation $\Lambda_p:F^{p}(\alpha,u)\to F^p_{\red}(\alpha,u)$ is isometric.
\end{thm}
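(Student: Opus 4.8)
The plan is to establish the reverse inequality $\|b\|_{L^p}\le\|\Lambda_p(b)\|$ for every $b\in F(\alpha,u)$, since $\Lambda_p$ is contractive and hence $\|\Lambda_p(b)\|\le\|b\|_{L^p}$ holds automatically. By Proposition \ref{prop:reduced_norm_is_induced} we have $\|\Ind_p(\pi)(b)\|\le\|\Lambda_p(b)\|$ for every non-degenerate representation $\pi$ of $C_0(X)$ on an $L^p$-space, so it suffices to fix an arbitrary non-degenerate representation $\psi=\pi\rtimes v$ of $F(\alpha,u)$ on an $L^p$-space $E$ (via Lemma \ref{lem:integration_disintegration}) with $\pi=\psi|_{C_0(X)}$, and to prove $\|\psi(b)\|\le\|\Ind_p(\pi)(b)\|$, where $\Ind_p(\pi)$ acts on $\ell^p(G,E)$. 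As in the proof of Proposition \ref{prop:reduced_norm_is_induced}, I would first reduce to the case where $\pi$ is a multiplication representation, i.e.\ $E=L^p(\nu)$ and $\pi(a)$ is multiplication by $a\circ\Phi$ for a measurable map $\Phi$, and (by unitising) arrange that the relevant sums are controlled on a compact part of $X$. Fix $q$ with $1/p+1/q=1$ and take the nets $(g_i),(h_i)$ provided by Lemma \ref{lem:p_invariant_means}.

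The heart of the argument is a spatial Fell-type absorption. I would introduce intertwiners $S_i:E\to\ell^p(G,E)$ and $T_i:\ell^p(G,E)\to E$ by
\[
(S_i\zeta)(t)=\pi(\alpha_t^{-1}(g_i(t)))\,v_t^{-1}\zeta,\qquad T_i\xi=\sum_{t\in G}v_t\,\pi(\alpha_t^{-1}(h_i(t)))\,\xi(t).
\]
Using $v_t^{-1}\pi(g_i(t))=\pi(\alpha_t^{-1}(g_i(t)))v_t^{-1}$ and that $v_t^{-1}$ is an isometry, the multiplication form of $\pi$ gives $\|S_i\zeta\|^p=\int\big(\sum_t g_i(t)(\Phi(y))^p\big)\,|\zeta(y)|^p\,d\nu$, so $\limsup_i\|S_i\|\le1$ follows from $\sum_t g_i(t)^p\to1$. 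Dually, testing $T_i$ against $\eta\in E^*=L^q(\nu)$, passing the multiplication operators onto $\eta$ and applying H\"older's inequality in the $G$-variable with exponents $p,q$, the bound $\sum_t h_i(t)^q\to1$ yields $\limsup_i\|T_i\|\le1$. Finally a direct computation — in which the twisting terms cancel through the identities $v_t\,v_{s^{-1}t}^{-1}=\overline{\pi}(u(s,s^{-1}t)^*)\,v_s$ and $\pi(\,\cdot\,u(s,s^{-1}t))\,\overline{\pi}(u(s,s^{-1}t)^*)=\pi(\,\cdot\,)$ — collapses the double sum to
\[
T_i\,\Ind_p(\pi)(b)\,S_i\zeta=\sum_{s\in G}\pi\Big(b(s)\sum_{t\in G}h_i(st)\,\alpha_s(g_i(t))\Big)v_s\zeta.
\]
Since $b$ is finitely supported and $\sum_t h_i(st)\,\alpha_s(g_i(t))\to1$ (the third conclusion of Lemma \ref{lem:p_invariant_means}), the right-hand side converges to $\psi(b)\zeta=\sum_s\pi(b(s))v_s\zeta$. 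Hence $\|\psi(b)\zeta\|\le\limsup_i\|T_i\|\,\|\Ind_p(\pi)(b)\|\,\|S_i\|\,\|\zeta\|\le\|\Ind_p(\pi)(b)\|\,\|\zeta\|$, and taking the supremum over $\zeta$ gives $\|\psi(b)\|\le\|\Ind_p(\pi)(b)\|\le\|\Lambda_p(b)\|$, as required.

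The main obstacle is that, in contrast with the $C^*$-case, there is no positivity to exploit and the approximate invariant mean only converges compactly; so the delicate point is converting this compact convergence into the genuine estimates $\limsup_i\|S_i\|,\limsup_i\|T_i\|\le1$ and the strong convergence above. I expect to handle this exactly as in Proposition \ref{prop:reduced_norm_is_induced}: using inner regularity to concentrate $\zeta$ (and the dual vector $\eta$) on $\Phi^{-1}(K)$ for a compact $K\subseteq X$ on which the means are uniformly close to $1$, together with a normalisation ensuring $\sum_t f_i(t)\le1$ so that the tails stay bounded. A secondary technical burden, absent when $p=2$, is the asymmetric bookkeeping forced by working simultaneously with $L^p$ and its dual $L^q$: the two nets $(g_i),(h_i)$ with $\sum_t g_i(t)^p\to1$ and $\sum_t h_i(t)^q\to1$ play genuinely different roles in bounding $S_i$ and $T_i$, and it is precisely the mixed relation $\sum_t h_i(st)\,\alpha_s(g_i(t))\to1$ that matches the induced representation. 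Pinning down the reduction to multiplication representations of $C_0(X)$ on $L^p$-spaces (the $L^p$-analogue of the $C_0$-space statement used earlier) is the other ingredient I would need to make precise.
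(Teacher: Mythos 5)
Your proposal is correct and is essentially the paper's own proof repackaged in operator (Fell-absorption) form: your $S_i$ and $T_i$ are built from the same nets of Lemma \ref{lem:p_invariant_means}, the transpose $T_i'\eta$ is literally the paper's functional family $\eta_i(t)=v_t'\pi(h_i(t))'\eta$ (and $S_i\zeta$ its $\xi_i$, up to twist bookkeeping), the collapsing double-sum computation with the cocycle cancellation is the same, and the reduction via Proposition \ref{prop:reduced_norm_is_induced} together with the multiplication form of $L^p$-representations of $C_0(X)$ (available for $p\neq 2$ by \cite{BKM1}; the paper quotes the known $C^*$-result for $p=2$, a case your spatial reduction would otherwise need to treat separately) is identical. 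The one caveat you flag yourself --- compact convergence of $\sum_t f_i(t)$ does not bound $\|S_i\|$, $\|T_i\|$ globally --- is resolved in the paper not by normalising the mean but by pairing against vectors and functionals from the dense subspaces $\pi(C_c(X))L^p(\mu)$ and $\pi(C_c(X))L^q(\mu)$, on which $\|\xi_i\|\to\|\xi\|$ and $\|\eta_i\|\to\|\eta\|$; either fix works.
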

\begin{proof}  Let $(\pi,v)$ be a covariant representation of $(\alpha,u)$ on some Banach space $E$ and let $(g_i)_{i\in I}$, $(h_i)_{i\in I}$ be nets in  $C_c(G,C_c(X)^+)$ as in Lemma \ref{lem:p_invariant_means}.
For $\xi \in E$, $\eta \in E'$ we put  
 $$
\xi_i(t):= v_{t^{-1}}\pi(\overline{u(t,t^{-1})}g_i(t))\xi, \qquad  \eta_i(t):=v_{t}' \pi(h_i(t))'\eta, \qquad t\in G.
$$
Then $\xi_i\in \ell^{p}(G,E)$ and $\eta_i\in \ell^{q}(G,E')$. Using the pairing establishing the isomorphism $\ell^{p}(G,E)'\cong \ell^{q}(G,E')$,
 for any $a\in C_c(X)$ and $s\in G$ we get
\begin{align*}
\langle \Ind_p(\pi)(a\delta_{s})\xi_{i}, \eta_{i}\rangle 
&= \sum_{t\in G} \langle \pi(\alpha_{st}^{-1}(au(s,t)
)) \xi_{i}(t),\eta_i(st)\rangle
\\
&
= \sum_{t\in G} \Big\langle \pi(\alpha_{st}^{-1}(au(s,t)
)) v_{t^{-1}} \pi\big(\overline{u(t,t^{-1})} g_i(t)\big)\xi, v_{st} '\pi(h_i(st))'\eta\Big\rangle
\\
&
=  \sum_{t\in G}  \Big\langle \pi\Big(h_{i}(st) au(s,t) u(st,t^{-1})  \alpha_{s}(g_i(t))  \overline{u(t,t^{-1})}\Big)v_{s} \xi , \eta\Big\rangle 
\\
&
=\Big\langle \pi\Big( \sum_{t\in G} h_{i}(st) \alpha_{s}(g_i(t)) a\Big)v_{s} \xi , \eta\Big\rangle 
\longrightarrow \langle \pi(a)v_{s} \xi,\eta\rangle.
\end{align*}
By linearity, for every $b\in C_c(G,C_c(X))\subseteq F(\alpha,u)$ we get $ \langle \pi\rtimes v (b) \xi,\eta\rangle=\lim_{i} 
\langle \Ind_p(\pi)(b)\xi_{i}, \eta_{i}\rangle$.
This shows that $\pi\rtimes v (b)\neq 0$ implies  $\Ind_p(\pi)(b)\neq 0$ for any $b\in F(\alpha,u)$. To get norm estimates we specialise to the case where
$E=L^p(\mu)$ for some measure $\mu$ (then $E'=L^{q}(\mu)$).  Without changing the Banach space we may assume that $\mu$ is localisable.
$E=L^p(\mu)$ for some measure $\mu$ (then $E'=L^{q}(\mu)$).  Without changing the Banach space we may assume that $\mu$ is localisable.
Also we may assume that $p\neq 2$, as for $p=2$ the assertion is well known, see \cite[Theorem 5.3]{Anantharaman-Delaroche}.
 Then $\pi$ is given by multiplication operators, so there is a homomorphism $\pi_0:C_0(X)\to L^{\infty}(\mu)$, 
such that $\pi(a)\xi=\pi_0(a) \cdot \xi$, $\xi \in L^p(\mu)$, see \cite[Theorem 2.14]{BKM1}. Using this, and that $\sum_{t\in G} g_i(t)^p$ converges compactly to $1$, 
for any $\xi \in \pi(C_c(X))L^{p}(\mu)$, we get  
\begin{align*}
\|\xi_i\|^p&=\sum_{t\in G}\|v_{t^{-1}}\pi(g_i(t))\xi\|^p=\sum_{t\in G}\int |\pi_0(g_i(t))\xi|^p \, d\mu=\int \pi_0\left(\sum_{t\in G}g_i(t)^p\right)| \xi|^p \, d\mu
\\
&
\longrightarrow  \int |\xi|^p \, d\mu=\|\xi\|^p.
\end{align*}
Similarly, for $\eta \in \pi(C_c(X))L^{q}(\mu)$ we get $\|\eta_i\|\longrightarrow  \|\eta\|$.
Thus for any $b\in C_c(G,C_c(X))\subseteq F(\alpha,u)$ we have
 $ |\langle \pi\rtimes v (b) \xi,\eta\rangle|=|\lim_{i} 
\langle \Ind_p(\pi)(b)\xi_{i}, \eta_{i}\rangle|\leq  \|\Ind_p(\pi)(b)\| \|\xi\| \|\eta\|$,
which implies $\| \pi\rtimes v (b)\|\leq \|\Ind_p(\pi)(b)\|$.
Since $(\pi, v)$ was arbitrary, we obtain  $\|b\|_{L^p}\leq \|\Lambda_p(b)\|$ by  Proposition \ref{prop:reduced_norm_is_induced}.
Therefore the full and reduced norms coincide.
\end{proof}

\section{Fourier decomposition and reduced crossed products}
\label{sec:reduced crossed products}
 Throughout this section $(\alpha,u)$ is a fixed twisted action  on $C_0(X)$.  For each $t\in G$, the evaluation map $
E_{t}(b):=b(t)
$   defines a contractive linear map $E_{t}:F(\alpha,u)\to C_0(X)$, and $b\in F(\alpha,u)$ is 
determined by the coefficients $\{E_{t}(b)\}_{t\in G}\subseteq C_0(X)$.  We formalise this structure in other Banach algebra completions of $F(\alpha,u)$
as follows.
\begin{defn} 
Let $\RR$ be a family of representations that separates the points of $F(\alpha,u)$.
 We say that  $F_{\RR}(\alpha,u)$ is a \emph{Banach algebra crossed product for $(\alpha,u)$} or that $F_{\RR}(\alpha,u)$ \emph{admits the Fourier decomposition}  if $E_{1}:F(\alpha,u)\to C_0(X)$ extends to a contractive map $E_{1}^{\RR}:F_{\RR}(\alpha,u)\to C_0(X)$. 
\end{defn}
By an \emph{ideal} in a Banach algebra we always mean a closed two-sided ideal.
\begin{lem}\label{lem:Fourier_decomposition_maps}
If $F_{\RR}(\alpha,u)$ is a Banach algebra crossed product, then each $E_{t}:F(\alpha,u)\to C_0(X)$ extends to a contractive  map $E_{t}^{\RR}:F_{\RR}(\alpha,u)\to C_0(X)$, $t\in G$.
They are left $C_0(X)$-module maps and their joint kernel $\bigcap_{t\in G}\ker(E_t^{\RR})$ is an ideal in $F_{\RR}(\alpha,u)$, which has a zero intersection with $C_0(X)$.
\end{lem}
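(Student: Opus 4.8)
The plan is to build every $E_t^{\RR}$ out of the single given extension $E_1^{\RR}$ together with the action of $G$ on $F_{\RR}(\alpha,u)$ by multipliers. The crucial preliminary observation is that for each $g\in G$ the multiplier $1_X\delta_g\in\M(F(\alpha,u))$ acts isometrically on the completion $F_{\RR}(\alpha,u)$, both from the left and from the right. Indeed, by Lemma~\ref{lem:integration_disintegration} any $\psi\in\RR$ disintegrates as $\psi=\pi\rtimes v$, and since $\pi$ is non-degenerate its extension sends the unit $1_X$ of $\M(C_0(X))=C_b(X)$ to the identity, so $\overline{\psi}(1_X\delta_g)=v_g\in\Iso(E)$ is an invertible isometry. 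Hence $\|\psi(b\cdot 1_X\delta_g)\|=\|\psi(b)v_g\|=\|\psi(b)\|$ for every $b\in F(\alpha,u)$, and taking the supremum over $\psi\in\RR$ shows that right (and symmetrically left) multiplication by $1_X\delta_g$ is $\|\cdot\|_{\RR}$-isometric, hence extends to an isometry of $F_{\RR}(\alpha,u)$.

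Given this, I would define $E_t^{\RR}(b):=E_1^{\RR}\big(b\cdot 1_X\delta_{t^{-1}}\big)\,\overline{u(t,t^{-1})}$. A direct computation in $F(\alpha,u)$ gives $(b\cdot 1_X\delta_{t^{-1}})(1)=b(t)u(t,t^{-1})$, so this formula reduces to $E_t$ on the dense subalgebra; contractivity is immediate since $E_1^{\RR}$ is contractive, right multiplication by $1_X\delta_{t^{-1}}$ is isometric, and multiplication by the unitary $u(t,t^{-1})\in C_u(X)$ preserves the norm on $C_0(X)$. That $E_1^{\RR}$ is a left $C_0(X)$-module map is inherited from $E_1$ by continuity (both $b\mapsto E_1^{\RR}(ab)$ and $b\mapsto aE_1^{\RR}(b)$ are continuous and agree on $F(\alpha,u)$), and the module property of each $E_t^{\RR}$ then follows by pulling $a\in C_0(X)$ through the definition.

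For the ideal claim, set $N:=\bigcap_{t\in G}\ker(E_t^{\RR})$, which is closed. Computing on the dense subalgebra and extending by continuity, one obtains the index-shift identities $E_t^{\RR}(1_X\delta_g\cdot b)=\alpha_g\big(E_{g^{-1}t}^{\RR}(b)\big)u(g,g^{-1}t)$ and $E_t^{\RR}(b\cdot 1_X\delta_g)=E_{tg^{-1}}^{\RR}(b)\,u(tg^{-1},g)$, together with $E_t^{\RR}(ab)=aE_t^{\RR}(b)$ and $E_t^{\RR}(ba)=E_t^{\RR}(b)\alpha_t(a)$ for $a\in C_0(X)$. These show at once that left and right multiplication by each $1_X\delta_g$ and by each $a\in C_0(X)$ maps $N$ into $N$. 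Since the elements $a\delta_g=a\cdot 1_X\delta_g$ span a dense subspace of $F_{\RR}(\alpha,u)$ and $N$ is closed, an approximation argument using joint continuity of multiplication upgrades this to $cN\subseteq N$ and $Nc\subseteq N$ for every $c\in F_{\RR}(\alpha,u)$, so $N$ is a two-sided ideal. Finally, the embedding $C_0(X)\hookrightarrow F_{\RR}(\alpha,u)$ satisfies $E_1^{\RR}(a)=a$, whence $a\in N$ forces $a=0$, giving $N\cap C_0(X)=0$.

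The only genuinely delicate point is the first paragraph: one must ensure the multiplier action descends to the completion, and this is exactly where the disintegration of representations (Lemma~\ref{lem:integration_disintegration}) and the fact that the $v_g$ are invertible isometries are used. Everything after that is routine crossed-product bookkeeping combined with the density of $\bigoplus_{g\in G} C_0(X)\delta_g$ and the continuity of the extended maps.
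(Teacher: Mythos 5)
Your proposal is correct and follows essentially the same route as the paper: the paper likewise uses Lemma~\ref{lem:integration_disintegration} to obtain the invertible isometric multipliers $v_t$ (your $1_X\delta_g$), defines $E_t^{\RR}(b)=E_1^{\RR}(bv_t^{-1})$ --- which agrees with your formula since $v_t^{-1}=v_{t^{-1}}\overline{u(t,t^{-1})}$ --- and derives the ideal property from the same index-shift identities. The one cosmetic difference is that the paper disintegrates a single non-degenerate representation, namely $F_{\RR}(\alpha,u)$ acting on itself, which sidesteps the fact that members of $\RR$ need not be non-degenerate; your supremum argument needs the harmless extra remark that each $\psi\in\RR$ may be replaced by its restriction to the essential subspace $\overline{\psi(F(\alpha,u))E}$ without changing $\|\psi(b)\|$ for $b\in F(\alpha,u)$, thanks to the contractive approximate unit.
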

\begin{proof}
We may assume that $F_{\RR}(\alpha,u)\subseteq \BB(E)$ acts in a non-degenerate way on some Banach space $E$ (one may take $E=F_{\RR}(\alpha,u)$).
By Lemma \ref{lem:integration_disintegration}, the inclusion map disintegrates to a covariant representation $(\text{id}_{C_0(X)}, v)$.
In particular,  $b=\sum_{t\in G} b(t)v_t$ for $b\in  F(\alpha,u)\subseteq F_{\RR}(\alpha,u)$. The invertible isometries $v_t$, $t\in G$, 
are multipliers of $C_c(G,C_0(X))\subseteq F(\alpha,u)\subseteq F_{\RR}(\alpha,u)$,
and so also of $F_{\RR}(\alpha,u)$. Thus  the formula $E_{t}^{\RR}(b):=E_{1}^{\RR}(b v_{t}^{-1})$ yields a well-defined  contractive map $E_{t}^{\RR}:F_{\RR}(\alpha,u)\to C_0(X)$. Simple verification, using that $v_{t}^{-1}=\overline{u(t^{-1},t)} v_{t^{-1}}= v_{t^{-1}}\overline{u(t,t^{-1})}$, shows that $E_{t}^{\RR}$ extends  $E_t$ and we also have 
$
E_{t}^{\RR}(b)=\alpha_t\left(E_{1}^{\RR}(v_{t}^{-1}b)\right)
$, $b\in F_{\RR}(\alpha,u)$.
Since $E_{1}^{\RR}$ is  $C_0(X)$-bimodule map, this readily implies that 
\begin{equation}\label{eq:twisted_module_maps}
E_{t}^{\RR}(ab)=a E_{t}^{\RR}(b), \qquad E_{t}^{\RR}(ba)=E_{t}^{\RR}(b)\alpha_t(a),\qquad a\in C_0(X),\, b\in F_{\RR}(\alpha,u).
\end{equation}
Similarly, using that $v_{t}^{-1}v_{s}^{-1}=v_{st}^{-1}\overline{u(s,t)}$ for any $s,t\in G$ and $b\in  F_{\RR}(\alpha,u)$ we  have  
$$E_{t}^{\RR}(v_s^{-1}b)=E_{st}^{\RR}(b) \alpha_t \big(\overline{u(s,t)}\big), \qquad  E_{s}^{\RR}(bv_t^{-1})=E_{st}^{\RR}(b) \overline{u(s,t)}.
$$
Exploiting these relations we see that $\bigcap_{t\in G}\ker(E_t^{\RR})$ is an ideal. 
\end{proof}
\begin{rem}\label{rem:Banach_algebra_with_Fourier_decomposition}
A Banach algebra completion  $B$ of $F(\alpha,u)$ is of the form $F_{\RR}(\alpha,u)$  if and only if   $\|b\|_{B}\leq \sum_{t\in G} \|b(t)\|_{\infty}$ for any $b\in F(\alpha,u)$.
If this holds, then
\begin{align*}
B\text{ is a crossed product}\,\, &\Longleftrightarrow\,\, \|b(1)\|_{\infty}\leq \|b\|_{B}\text{ for all } b\in F(\alpha,u) 
\\
 &\stackrel{\ref{lem:Fourier_decomposition_maps}}{\Longleftrightarrow}\,\, \max_{t\in G}\|b(t)\|_{\infty}\leq \|b\|_{B}\text{ for all } b\in F(\alpha,u)
\\
 &\Longleftrightarrow\,\, \text{the inclusion $F(\alpha,u)\subseteq C_0(G,C_0(X))$ extends to a}
\\
&  \qquad \,\, \text{ contractive linear map $j_{B}:B\to  C_0(G,C_0(X))$}. 
\end{align*} 

If the above holds, then
the maps $E_{t}^{B}:B\to C_0(X)$, $t\in G$, from Lemma \ref{lem:Fourier_decomposition_maps}, are compositions of $j_B$ and evaluations on $C_0(G,C_0(X))$. 
In particular,   $\bigcap_{t\in G}\ker(E_t^{B})=\ker j_B$.
If $G$ is finite, then   $B=F(\alpha,u)$ as topological algebras, becasue for $b\in F(\alpha,u)$ we have 
$\max_{t\in G}\|b(t)\|_{\infty} \leq \|b\|_{B}\leq \sum_{t\in G} \|b(t)\|_{\infty} \leq |G| \max_{t\in G}\|b(t)\|_{\infty}
$, and so the norms are equivalent.
\end{rem}

\begin{defn}\label{def:reduced_crossed_product} Let $F_{\RR}(\alpha,u)$ be a Banach algebra crossed product. We call the maps $\{E_{t}^{\RR}\}_{t\in G}$ in Lemma \ref{lem:Fourier_decomposition_maps}
the \emph{Fourier decomposition} (\emph{FD} for short) for $F_{\RR}(\alpha,u)$. We say that  $F_{\RR}(\alpha,u)$ is a \emph{reduced crossed product} if
$\bigcap_{t\in G}\ker(E_t^{\RR})=\{0\}$, equivalently every $b\in F_{\RR}(\alpha,u)$ is uniquely
determined by the coefficients $\{E_{t}^{\RR}(b)\}_{t\in G}\subseteq C_0(X)$. 
\end{defn}
\begin{rem}
Obviously, $F(\alpha,u)$ is a reduced crossed product. By Remark \ref{rem:Banach_algebra_with_Fourier_decomposition}, a crossed product $F_{\RR}(\alpha,u)$ is  reduced if and only if  the inclusion $F(\alpha,u)\subseteq C_0(G,C_0(X))$ extends to a contractive injective linear map $j_{\RR}:F_{\RR}(\alpha,u)\to  C_0(G,C_0(X))$, and this is automatic when $G$ is finite. Every crossed product $F_{\RR}(\alpha,u)$  can be reduced in the sense that the quotient of 
$F_{\RR}(\alpha,u)$ by $\bigcap_{t\in G}\ker(E_t^{\RR})$ is naturally a reduced crossed product.
\end{rem}
\begin{rem} When $X=\{x\}$ and there is no twist, our definition agrees with the notion of an  abstract reduced Banach group algebra that  Phillips introduced in \cite[Definition 1.1]{Phillips19}. 
\end{rem}
\begin{rem} Exotic $C^*$-algebra crossed products for $(\alpha,u)$,  see \cite{Buss_Echt_Will}, are exactly those  crossed products $F_{\RR}(\alpha,u)$ which are $C^*$-algebras. 
Up to an isometric isomorphism there is exactly one reduced crossed product  for $(\alpha,u)$ which is a $C^*$-algebra, and this is the standard reduced $C^*$-algebra crossed product $C_{\red}^*(\alpha,u)$.
\end{rem}
\begin{lem}\label{lem:Fourier_Coefficients}
For any $p\in [1,\infty]$, $F^{p}_{\red}(\alpha,u)$ is a reduced crossed product.
\end{lem}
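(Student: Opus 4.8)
The plan is to show that the regular representation $\Lambda_p$ gives rise to a completion $F^p_{\red}(\alpha,u)$ that admits the Fourier decomposition and is moreover reduced, by exhibiting the coefficient maps $E_t^{\RR}$ concretely. First I would dispose of the endpoint cases $p\in\{1,\infty\}$, where by Remark~\ref{rem:consequences_of_the_groupoid_paper} the regular representation is already isometric and $F^p_{\red}(\alpha,u)=F^p(\alpha,u)$, so one only needs to check the Fourier decomposition there; in fact the formulas \eqref{eq:norms_for_1_and_infty} immediately give $\|b(t)\|_\infty\le\|b\|_{L^1}$ and $\|b(t)\|_\infty\le\|b\|_{L^\infty}$ for each $t$, so by the criterion in Remark~\ref{rem:Banach_algebra_with_Fourier_decomposition} these are even crossed products in the required sense.

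For the main range $p\in(1,\infty)$, the key is to produce a contractive extension $E_1^{\RR}$ of the evaluation $E_1$ to $F^p_{\red}(\alpha,u)=\overline{\Lambda_p(F(\alpha,u))}\subseteq B(\ell^p(X\times G))$. The natural device is to read off the coefficient $b(1)(x)$ as a matrix entry of the operator $\Lambda_p(b)$ against the standard basis $\{\mathds 1_{x,t}\}$: from the formula in Lemma~\ref{lem:regular_representation}, one computes $\langle \Lambda_p(b)\mathds 1_{x,t},\mathds 1_{x,t}\rangle = b(1)(x)$ (using $u(1,\cdot)=1$ by normalisation), where the pairing is the duality between $\ell^p(X\times G)$ and $\ell^q(X\times G)$, $1/p+1/q=1$. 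Since the basis vectors have norm one in $\ell^p$ and their dual functionals have norm one in $\ell^q$, this matrix coefficient is bounded by $\|\Lambda_p(b)\|$ uniformly in $(x,t)$, and taking the supremum over $(x,t)$ (with $t$ fixed, varying $x$) recovers $\|b(1)\|_\infty\le\|b\|_{L^p}$. Thus $E_1$ extends continuously and contractively to $E_1^{\RR}$ on the completion, so $F^p_{\red}(\alpha,u)$ admits the Fourier decomposition; the remaining maps $E_t^{\RR}$ and relations \eqref{eq:twisted_module_maps} then come for free from Lemma~\ref{lem:Fourier_decomposition_maps}.

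It then remains to verify that the crossed product is \emph{reduced}, i.e.\ $\bigcap_{t\in G}\ker(E_t^{\RR})=\{0\}$. By Lemma~\ref{lem:Fourier_decomposition_maps}, $E_t^{\RR}(b)=\alpha_t(E_1^{\RR}(v_t^{-1}b))$, so it suffices to show that if all coefficients $E_t^{\RR}(b)$ vanish then $\Lambda_p(b)=0$ as an operator. The cleanest way is to observe, as in the injectivity argument of Lemma~\ref{lem:regular_representation}, that the full matrix of $\Lambda_p(b)$ against the basis $\{\mathds 1_{x,t}\}$ is determined by the coefficients: a direct computation of $\langle\Lambda_p(b)\mathds 1_{x,t},\mathds 1_{x',t'}\rangle$ yields an expression that is (up to the non-vanishing scalars $u(s,t)(\varphi_s(x))$) one of the numbers $E_s^{\RR}(b)(\cdot)$ evaluated at a point, and is nonzero only when $(x',t')=(\varphi_s(x),st)$ for the relevant $s$. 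Hence all matrix entries vanish, forcing $\Lambda_p(b)=0$, and injectivity of $j_{\RR}$ (equivalently $\bigcap_t\ker E_t^{\RR}=\{0\}$) follows.

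The main obstacle I anticipate is being careful with the precise identification of the matrix coefficients: one must track the twist $u$ and the translation by $\varphi_s$ correctly so that each entry of $\Lambda_p(b)$ is pinned to exactly one Fourier coefficient $b(s)$, and one must justify that the duality pairing against unit basis vectors is genuinely norm-contractive in the $\ell^p$--$\ell^q$ sense (this is where $p\in(1,\infty)$ is used, though the computation also survives at the endpoints via \eqref{eq:norms_for_1_and_infty}). Once the bookkeeping is done, both the contractivity of $E_1^{\RR}$ and the triviality of the joint kernel reduce to the single observation that the basis $\{\mathds 1_{x,t}\}$ simultaneously diagonalises the coefficient structure of $\Lambda_p(b)$.
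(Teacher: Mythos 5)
Your proposal is correct and is essentially the paper's own argument: reading the Fourier coefficients off the matrix of $\Lambda_p(b)$ in the standard basis $\{\mathds{1}_{x,t}\}$ is exactly the paper's compression $\E_1(b)=\sum_{(x,t)}P_{x,t}\,b\,P_{x,t}$, whose $(x,t)$-diagonal entry is your pairing $\langle \Lambda_p(b)\mathds{1}_{x,t},\mathds{1}_{x,t}\rangle=b(1)(x)$, and your reducedness step via the support pattern $(x,t)\mapsto(\varphi_s(x),st)$ of the matrix is the same as the paper's use of \eqref{eq:Fourier_coeffficient_map_reduced}. One small correction: at the endpoints $p\in\{1,\infty\}$ the identity $F^p_{\red}(\alpha,u)=F^p(\alpha,u)$ does not by itself reduce the task to checking the Fourier decomposition --- triviality of $\bigcap_{t\in G}\ker(E_t^{\RR})$ in the sense of Definition \ref{def:reduced_crossed_product} still requires the matrix-entry argument (which, as you note at the end, does survive on $\ell^1(X\times G)$ and $c_0(X\times G)$; the paper handles $p=\infty$ by rerunning the proof on $c_0(X\times G)$ or via the anti-isomorphism with $p=1$).
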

\begin{proof} Assume that $p<\infty$.
We identify $F^{p}_{\red}(\alpha,u)$
with  $\overline{\Lambda_p(F(\alpha,u))}\subseteq B(\ell^p(X\times G))$ and use the notation from the proof of Lemma \ref{lem:regular_representation}. For each $(x,t)\in X\times G$ we let  $P_{x,t}(\xi):=\xi(x,t) \mathds{1}_{x,t}$ be a  contractive projection in $B(\ell^p(X\times G))$ onto the one-dimensional space $\C \mathds{1}_{x,t}$.  For any $b\in B(\ell^p(X\times G))$  the series
$$
\E_1(b):=\sum\limits_{(x,t)\in X\times G}P_{x,t} b  P_{x,t}
$$
 is strongly convergent and $\|\E_1(b)\|\leq \|b\|$, as for any $\xi\in \ell^p(X\times G)$ we get
$$
\|E_1(b)\xi\|^{p}=\sum\limits_{(x,t)\in X\times G} \|P_{x,t} b   \xi(x,t) \mathds{1}_{x,t}\|^p
\leq \|b\|^{p}  \sum\limits_{(x,t)\in X\times G} \|\xi(x,t) \mathds{1}_{x,t}\|^p=\|b\|^{p} \|\xi\|^{p}
$$
(these sums are in fact countable, as $\xi(x,t)\neq 0$ only for a countable set of pairs $(x,t)$). 
Also if $b\in F(\alpha,u)\subseteq F^{p}_{\red}(\alpha,u)\subseteq B(\ell^p(X\times G))$, then clearly 
$\E_1(b)=E_1(b)=b(1)$. 
Thus $\E_1$ is a contractive extension of $E_1$.
For any $t\in G$, the formula $\E_t(b):=\E_1(b v_{t}^{-1})$ defines a contractive map $\E_t:F^{p}_{\red}(\alpha,u)\to C_0(X)$ that extends $E_t$.
Hence $\{\E_t\}_{t\in G}$ is the Fourier decomposition of $F^{p}_{\red}(\alpha,u)$.
For any $s,t\in G$, $x\in X$ we have
\begin{equation}\label{eq:Fourier_coeffficient_map_reduced}
\E_s(b)\mathds{1}_{x,t}:=P_{x,t} b  \overline{\pi}(u(s,s^{-1}t)^{-1})   \mathds{1}_{\varphi_s^{-1}(x),s^{-1}t} .
\end{equation}
The operators  of the form $av_{s}$, $a\in C_0(X)$, $s\in G$, and hence also every $b\in  F^{p}_{\red}(\alpha,u)$,  maps  $\mathds{1}_{x,t}$ to the closed linear span of
elements $\mathds{1}_{\varphi_s(x),st}$, $s\in G$. Therefore if $b\neq 0$ there are $s,t\in G$ and $x\in X$
such that $P_{\varphi_s(x),st} b \mathds{1}_{x,t}\neq 0$. 
By \eqref{eq:Fourier_coeffficient_map_reduced}, the latter is equivalent to  $\E_s(b)\mathds{1}_{\varphi_s(x),st}\neq 0$.
Hence $b\neq 0$ implies $\E_{s}(b)\neq 0$ for some $s\in G$. Consequently, $\bigcap_{t\in G}\ker(\E_t)=\{0\}$.

For $p=\infty$ the proof above  works with $\ell^p(X\times G)$ replaced by $c_0(X\times G)$, but also the assertion follows from 
the case $p=1$ using the anti-isomorphism  $F^{\infty}_{\red} (\alpha,u)\stackrel{anti}{\cong}F^{1}_{\red} (\alpha,u)$.
\end{proof}
\begin{cor}\label{cor:Fourier_maps_on_full}
For any $p\in [1,\infty]$,  $F^{p}(\alpha,u)$ admits the Fourier decomposition 
$\{E_{p,t}\}_{t\in G}$ and $\bigcap_{t\in G}\ker(E_{p,t})=\ker\Lambda_p$
is the kernel of the regular representation.
\end{cor}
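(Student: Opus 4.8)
The plan is to build the Fourier decomposition of the full algebra $F^p(\alpha,u)$ by pulling back the one already available on the reduced algebra through the regular representation $\Lambda_p\colon F^p(\alpha,u)\to F^p_{\red}(\alpha,u)$. Two facts drive everything: $\Lambda_p$ is a contractive homomorphism restricting to the identity on the dense subalgebra $F(\alpha,u)$ (since $\Lambda_p$ is one of the representations defining the full norm), and, by Lemma \ref{lem:Fourier_Coefficients}, $F^p_{\red}(\alpha,u)$ is a reduced crossed product, so it carries a Fourier decomposition $\{\E_t\}_{t\in G}$ with $\bigcap_{t\in G}\ker(\E_t)=\{0\}$.

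First I would define $E_{p,1}:=\E_1\circ\Lambda_p\colon F^p(\alpha,u)\to C_0(X)$. Being a composition of contractions it is contractive, and for $b\in F(\alpha,u)$ we have $\Lambda_p(b)=b$ and $\E_1(b)=E_1(b)=b(1)$, so $E_{p,1}$ extends $E_1$. This is precisely the assertion that $F^p(\alpha,u)$ admits the Fourier decomposition, whereupon Lemma \ref{lem:Fourier_decomposition_maps} automatically provides the full family $\{E_{p,t}\}_{t\in G}$ of contractive maps $F^p(\alpha,u)\to C_0(X)$ extending the $E_t$.

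Next I would identify $E_{p,t}=\E_t\circ\Lambda_p$ for every $t\in G$. Both sides are contractive maps $F^p(\alpha,u)\to C_0(X)$, and on the dense subalgebra $F(\alpha,u)$ both send $b$ to the coefficient $b(t)=E_t(b)$ (for the right-hand side, because $\Lambda_p$ is the identity on $F(\alpha,u)$ and $\E_t$ extends $E_t$). Hence they agree by continuity. Alternatively one can argue directly from the formulas $E_{p,t}(b)=E_{p,1}(bv_t^{-1})$ and $\E_t(c)=\E_1(cv_t^{-1})$ of Lemma \ref{lem:Fourier_decomposition_maps}, using that $\Lambda_p$ intertwines the implementing isometries $v_t$ on the two levels.

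Finally, with $E_{p,t}=\E_t\circ\Lambda_p$, an element $b$ lies in $\bigcap_{t\in G}\ker(E_{p,t})$ if and only if $\Lambda_p(b)\in\bigcap_{t\in G}\ker(\E_t)$, which equals $\{0\}$ by Lemma \ref{lem:Fourier_Coefficients}; that is, if and only if $b\in\ker\Lambda_p$. This gives $\bigcap_{t\in G}\ker(E_{p,t})=\ker\Lambda_p$, as claimed, uniformly in $p\in[1,\infty]$ (the case $p=\infty$ being identical with $c_0$-spaces in place of $\ell^p$-spaces). The proof has no real obstacle: the content is entirely in Lemma \ref{lem:Fourier_Coefficients}, and the remaining work is the bookkeeping of checking that the two contractive maps agree on the dense subalgebra $F(\alpha,u)$.
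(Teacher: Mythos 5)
Your proof is correct and follows essentially the same route as the paper: the paper's (one-line) proof also sets $E_{p,t}:=\E_t\circ\Lambda_p$, where $\{\E_t\}_{t\in G}$ is the Fourier decomposition of $F^p_{\red}(\alpha,u)$ from Lemma \ref{lem:Fourier_Coefficients}, and the identity $\bigcap_{t\in G}\ker(E_{p,t})=\ker\Lambda_p$ then follows exactly as you argue from $\bigcap_{t\in G}\ker(\E_t)=\{0\}$. Your additional bookkeeping (contractivity, agreement with $E_t$ on the dense subalgebra $F(\alpha,u)$, the $c_0$ case for $p=\infty$) just makes explicit what the paper leaves implicit.
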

\begin{proof}
Put $E_{p,t}:=\E_t\circ \Lambda_p$,  $t\in G$, where  $\Lambda_p:F^{p}(\alpha,u)\to F^p_{\red}(\alpha,u)$ is the regular representation and $\{\E_{t}\}_{t\in G}$ is the Fourier decomposition of $F^p_{
\red}(\alpha,u)$ given by Lemma \ref{lem:Fourier_Coefficients}.
\end{proof}
There  are canonical ways of producing  (reduced) Banach algebra or $*$-Banach algebra crossed products  from 
 other (reduced) Banach algebra crossed products: 
\begin{lem}\label{lem:reduced_from_other_reduced}
 Let $\|\cdot \|_{\RR}$ and $\|\cdot\|_{\RR_{i}}$, $i\in I$, be norms on $F(\alpha,u)$ that  define Banach algebra  crossed products $F_{\RR}(\alpha,u)$ and $F_{\RR_i}(\alpha,u)$, $i\in I$, 
for $(\alpha,u)$.
Then the formulas 
$$
\|a\|_{\RR^*}:=\|a^*\|_{\RR}, \qquad \|a\|_{\{ \RR_i\}_{i\in I}}:=\sup_{i\in I}\|a\|_{R_i}, \qquad a\in F(\alpha,u),
$$
define norms that  yield Banach algebra  crossed products  $F_{\RR^*}(\alpha,u)$, $F_{\{ \RR_i\}_{i\in I}}(\alpha,u)$ for $(\alpha,u)$.  
In particular,  the norm $\|a\|_{\RR,*}:=\max\{\|a\|_{\RR},\|a^*\|_{\RR}\}$ defines a crossed product $F_{\RR,*}(\alpha,u)$ which is  a $*$-Banach algebra.
Moreover, $F_{\RR^*}(\alpha,u)$ and $F_{\RR,*}(\alpha,u)$ are reduced if $F_{\RR}(\alpha,u)$ is,
and $F_{\{ \RR_i\}_{i\in I}}(\alpha,u)$ is reduced if and only if  all $F_{\RR_i}(\alpha,u)$, $i\in I$, are reduced.
\end{lem}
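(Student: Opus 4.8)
The plan is to verify everything through the intrinsic criterion of Remark \ref{rem:Banach_algebra_with_Fourier_decomposition}: a submultiplicative norm $\|\cdot\|$ on $F(\alpha,u)$ is the norm of a Banach algebra crossed product precisely when $\max_{t\in G}\|b(t)\|_\infty\le\|b\|\le\sum_{t\in G}\|b(t)\|_\infty$ for all $b\in F(\alpha,u)$ (the lower bound also guarantees that $\|\cdot\|$ is a genuine norm). For $\|\cdot\|_{\{\RR_i\}}=\sup_i\|\cdot\|_{\RR_i}$ submultiplicativity and both estimates pass to the supremum, the upper bound ensuring finiteness. For $\|\cdot\|_{\RR^*}$ submultiplicativity comes from $(ab)^*=b^*a^*$, and the estimates from the observation that the involution of $F(\alpha,u)$ preserves both $\max_t\|b(t)\|_\infty$ and $\sum_t\|b(t)\|_\infty$, because $|a^*(t)(x)|=|a(t^{-1})(\varphi_{t^{-1}}(x))|$. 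Since $\|\cdot\|_{\RR,*}=\sup\{\|\cdot\|_{\RR},\|\cdot\|_{\RR^*}\}$ is the special case of the previous construction for the two-element family $\{\RR,\RR^*\}$, it too defines a crossed product, and $\|a^*\|_{\RR,*}=\|a\|_{\RR,*}$ shows the involution is isometric and hence extends to the completion, making $F_{\RR,*}(\alpha,u)$ a $*$-Banach algebra.

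For the reducedness claims I would exploit isometric identifications. The equality $\|a\|_{\RR^*}=\|a^*\|_\RR$ means the involution extends to an isometric anti-isomorphism $\Phi\colon F_{\RR^*}(\alpha,u)\to F_\RR(\alpha,u)$. Evaluating the Fourier decomposition on the dense subalgebra and extending by continuity yields $E_t^{\RR}(\Phi(b))=\alpha_t\big((E_{t^{-1}}^{\RR^*}(b))^*\big)u(t,t^{-1})^*$, so, $\alpha_t$ being an automorphism and $u(t,t^{-1})$ unitary, $E_t^{\RR}(\Phi(b))=0$ if and only if $E_{t^{-1}}^{\RR^*}(b)=0$. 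Hence $\Phi$ maps $\bigcap_{t}\ker E_t^{\RR^*}$ onto $\bigcap_{t}\ker E_t^{\RR}$, which is $\{0\}$ once $F_\RR(\alpha,u)$ is reduced. For $F_{\RR,*}(\alpha,u)$ note that $F_{\RR^*}(\alpha,u)$ is then reduced as well, and assemble the contractive maps $q\colon F_{\RR,*}(\alpha,u)\to F_\RR(\alpha,u)$ and $q^*\colon F_{\RR,*}(\alpha,u)\to F_{\RR^*}(\alpha,u)$ (extending the identity) into $(q,q^*)$ with values in the $\ell^\infty$-direct sum $F_\RR(\alpha,u)\oplus_\infty F_{\RR^*}(\alpha,u)$; this is isometric on $F(\alpha,u)$, hence injective, and since both sides restrict to $E_t$ on $F(\alpha,u)$ we have $E_t^{\RR}\circ q=E_t^{\RR,*}=E_t^{\RR^*}\circ q^*$, so reducedness of both targets gives $\bigcap_t\ker E_t^{\RR,*}=\ker q=\ker q^*\subseteq\ker q\cap\ker q^*=\{0\}$.

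The same device settles the forward implication for a family: the diagonal map $\Theta\colon F_{\{\RR_i\}}(\alpha,u)\to\bigoplus^{\infty}_{i\in I}F_{\RR_i}(\alpha,u)$, $b\mapsto(q_i(b))_i$, into the $\ell^\infty$-direct sum is an isometric embedding, and $E_t^{\RR_i}\circ q_i=E_t^{\{\RR_i\}}$ shows that if all $F_{\RR_i}(\alpha,u)$ are reduced then any $b\in\bigcap_t\ker E_t^{\{\RR_i\}}$ satisfies $q_i(b)=0$ for all $i$, whence $\Theta(b)=0$ and $b=0$. I expect the reverse implication to be the main obstacle. The formal reduction is clear---one has $\bigcap_t\ker E_t^{\{\RR_i\}}=q_{i_0}^{-1}\big(\bigcap_t\ker E_t^{\RR_{i_0}}\big)$, so reducedness of $F_{\{\RR_i\}}(\alpha,u)$ forces only that the closed ideal $\bigcap_t\ker E_t^{\RR_{i_0}}$ of $F_{\RR_{i_0}}(\alpha,u)$ meet the dense range of the contractive injection $q_{i_0}$ trivially. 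Passing from this to the vanishing of the whole ideal is the delicate step: a contractive map with dense range need not make a continuous map on the target injective, so the soft functional-analytic manipulations that suffice for all the preceding parts break down here, and this is exactly the point where I anticipate the argument must use genuine structure of the Fourier decomposition (or an additional hypothesis) rather than formal manipulation.
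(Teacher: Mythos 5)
Everything you prove is correct and is, in substance, the paper's own proof. The paper verifies the same two norm estimates from Remark \ref{rem:Banach_algebra_with_Fourier_decomposition} (for $\RR^*$ it checks $\|b(1)\|_\infty=\|b(1)^*\|_\infty\le\|b^*\|_{\RR}=\|b\|_{\RR^*}$, which is your computation $|a^*(t)(x)|=|a(t^{-1})(\varphi_{t^{-1}}(x))|$ at $t=1$); your identity $E_t^{\RR}(\Phi(b))=\alpha_t\big((E_{t^{-1}}^{\RR^*}(b))^*\big)u(t,t^{-1})^*$ is exactly the paper's displayed formula $E_t^{\RR^*}(a)=\alpha_{t}(E_{t^{-1}}^{\RR}(a^*))^* u(t,t^{-1})^*$ read through the extended involution; and your diagonal map $\Theta$ together with $E_t^{\RR_i}\circ q_i=E_t^{\{\RR_i\}}$ is the paper's isometric embedding $\pi(a)=\prod_{i\in I}a$ into $\prod_{i\in I}F_{\RR_i}(\alpha,u)$. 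So the construction of all three completions, the $*$-Banach algebra claim, the reducedness of $F_{\RR^*}(\alpha,u)$ and $F_{\RR,*}(\alpha,u)$, and the ``if'' half of the family statement all match the paper.

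The step you flag as the obstacle deserves a sharper verdict than you give it: the paper's proof never addresses it either --- it ends with ``$F_{\{\RR_i\}_{i\in I}}(\alpha,u)$ is reduced if all $F_{\RR_i}(\alpha,u)$, $i\in I$, are reduced'' --- and the ``only if'' direction is in fact false as stated. Take $X=\{x\}$, $G=\F_2$, $u\equiv 1$, $I=\{1,2\}$, with $\|\cdot\|_{\RR_1}$ the $\ell^1$-norm (realised by left multiplication of $F(\alpha)=\ell^1(\F_2)$ on itself) and $\|\cdot\|_{\RR_2}$ the universal $C^*$-norm, so $F_{\RR_2}(\alpha)=F^2(\alpha)=C^*(\F_2)$. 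The latter admits the Fourier decomposition by Corollary \ref{cor:Fourier_maps_on_full} but is not reduced, since $\bigcap_{t\in G}\ker(E_{2,t})=\ker\Lambda_2\neq\{0\}$ because $\F_2$ is not amenable. Yet every representation of $F(\alpha)$ is $\ell^1$-contractive, so $\sup\{\|\cdot\|_{\RR_1},\|\cdot\|_{\RR_2}\}=\|\cdot\|_1$ and $F_{\{\RR_1,\RR_2\}}(\alpha)=\ell^1(\F_2)$ is reduced. This realises exactly the failure mode you isolated: the joint kernel $\ker\Lambda_2$ of $F_{\RR_2}(\alpha)$ is nonzero but meets the dense range of $q_2$ --- the canonical copy of $\ell^1(\F_2)$ inside $C^*(\F_2)$ --- trivially, because $\Lambda_2$ is injective on $F(\alpha)$ by Lemma \ref{lem:regular_representation}. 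So no argument, soft or structural, can close the gap; the family statement should be read with ``if'' in place of ``if and only if'', which is also the only direction the paper ever uses of it (Example \ref{ex:Banach_*-algebras}, Remark \ref{rem:properties_of_F^P}).
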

\begin{proof}
Clearly, $ \|\cdot\|_{\RR^*}$ is a submultiplicative norm with   $\|b\|_{\RR^*}\leq \sum_{t\in G} \|b(t)\|_{\infty}$ for  $b\in F(\alpha,u)$ (because $\|\cdot\|_{\RR}$ has these properties). 
The involution on $F(\alpha,u)$ extends to an antimultiplicative antilinear isometry $*$ from 
$F_{\RR^*}(\alpha,u)$ onto $F_{\RR}(\alpha,u)$. In particular, if $F_{\RR}(\alpha,u)$ admits the FD, then so does $F_{\RR^*}(\alpha,u)$, as  then 
$
\|b(1)\|=\|b(1)^*\|\leq \|b^*\|_{\RR}= \|b\|_{\RR^*}, 
$ for all $b\in F(\alpha,u)$. Moreover, if  $\{E_{t}^{\RR}\}_{t\in G}$ is the FD for $F_{\RR}(\alpha,u)$ and $\{E_{t}^{\RR^*}\}_{t\in G}$ is the FD for $F_{\RR^*}(\alpha,u)$,
then by continuity  and the formula for involution on $F(\alpha,u)$ we get 
$$
E_t^{\RR^*}(a)=\alpha_{t}(E_{t^{-1}}^{\RR}(a^*))^* u(t,t^{-1})^*, \qquad t\in G, \, a\in F_{\RR^*}(\alpha,u).
$$
This readily implies that $F_{\RR^*}(\alpha,u)$ is reduced if and only if $F_{\RR}(\alpha,u)$ is.

Clearly, $\|\cdot \|_{\{ \RR_i\}_{i\in I}}$ is a submultiplicative norm with   $\|b\|_{\{ \RR_i\}_{i\in I}}\leq \sum_{t\in G} \|b(t)\|_{\infty}$ for  $b\in F(\alpha,u)$ (because each $\|\cdot\|_{\RR_{i}}$ has these properties).
If there is at least one $i_0\in I$ such that $F_{\RR_{i_0}}(\alpha,u)$ admits the FD, then $F_{\{ \RR_i\}_{i\in I}}(\alpha,u)$ admits the FD, as  then
$\|b(1)\|_{\infty}\leq \|b\|_{R_{i_0}}\leq  \|b\|_{\{ \RR_i\}_{i\in I}}$ for all $b\in F(\alpha,u)$. 
For each $i\in I$,  let $\{E_{t}^{\RR_i}\}_{t\in G}$  be the FD for $F_{\RR_{i}}(\alpha,u)$, 
and denote by  $\{\EE_{t}\}_{t\in G}$  the FD for $F_{\{ \RR_i\}_{i\in I}}(\alpha,u)$. Note that $\pi(a):=\prod_{i\in I} a$ is an isometric embedding of $F_{\{ \RR_i\}_{i\in I}}(\alpha,u)$
into the direct product $\prod_{i\in I} F_{\RR_{i}}(\alpha,u)$. Moreover, $\prod_{i\in I} \EE_{t}= \prod_{i\in I} E_{t}^{\RR_i}\circ  \pi$   (as maps from  $F_{\{ \RR_i\}_{i\in I}}(\alpha,u)$ to  $\prod_{i\in I} C_0(X)$).
Thus $\bigcap_{t\in G}\ker(\EE_t)=0$ if $\bigcap_{t\in G}\ker( E_{t}^{\RR_i})=0$ for all $i\in I$. That is, $F_{\{ \RR_i\}_{i\in I}}(\alpha,u)$ is reduced if all $F_{\RR_{i}}(\alpha,u)$, $i\in I$, are reduced.
\end{proof}
\begin{ex}
\label{ex:Banach_*-algebras}
Let $p,q\in[1,\infty]$ satisfy $1/p+1/q=1$, and let $\|\cdot\|_{L^p}$ and $\|\cdot\|_{L^p,\red}$ be norms in $F^p(\alpha,u)$ and $F^p_{\red}(\alpha,u)$, respectively.
By Remark \ref{rem:consequences_of_the_groupoid_paper}, we have  $\|a^*\|_{L^p}=\|a\|_{L^q}$,  $\|a^*\|_{L^p,\red}=\|a\|_{L^q,\red}$ for $a\in F(\alpha,u)$. Thus in the notation of Lemma \ref{lem:reduced_from_other_reduced},  we have 
 $\|a\|_{L^p,*}=\max\{\|a\|_{L^p},\|a\|_{L^q}\}$ and $\|a\|_{L^p,\red,*}=\max\{\|a\|_{L^p,\red},\|a\|_{L^q,\red}\}$. Hence 
$$
F^{p,*}(\alpha,u):=\overline{F(\alpha,u)}^{\|\cdot\|_{L^p,*}}\qquad\text{and}\qquad F^{p,*}_{\red}(\alpha,u):=\overline{F(\alpha,u)}^{\|\cdot\|_{L^p,\red,*}}
$$
are Banach $*$-algebra crossed products for $(\alpha,u)$. The algebra $F^{p,*}(\alpha,u)$ is universal for covariant representions of $(\alpha,u)$ on $\ell^{\infty}$-direct sums of 
$L^p$ and $L^q$-spaces, while $F^{p,*}_{\red}(\alpha,u)$ is its reduced version. 
The reduced group Banach algebra $B^{p, *}_{\mathrm{r}} (G)$ introduced in \cite{LiYu}, see \cite[Definition 1.6]{Phillips19}, 
is the special case of $F^{p,*}_{\red}(\alpha,u)$ where $X=\{x\}$ and $u\equiv 1$. 
In the  context of twisted groupoids, the algebras $F^{p,*}_{\red}(\alpha,u)$ were studied in  \cite{Austad_Ortega},  see \cite[Definition 3.7]{Austad_Ortega}.
In particular, we have 
$
F^{1,*}(\alpha,u)=F^{1,*}_{\red}(\alpha,u)=F^{\infty,*}(\alpha,u)=F^{\infty,*}_{\red}(\alpha,u)
$
and this $*$-Banach algebra is equipped with the so called Hahn's \emph{$I$-norm} $\|\cdot\|_{I}$ considered by a number of authors (cf.
\cite{BKM1}, \cite{Austad_Ortega}, and the sources cited there). We denote this $*$-Banach algebra by $F_{I}(\alpha,u)$ and call it \emph{Hahn's crossed product}.
 \end{ex}
To cover all these examples under one umbrella we will associate crossed products to any set of parameters $P\subseteq [1,\infty]$. 
\begin{defn}\label{def:L^P_crossed_products}
For any non-empty $P\subseteq [1,\infty]$ we put
$$
F^P(\alpha,u):=\overline{F(\alpha,u)}^{\|\cdot\|_{P}}\quad \textrm{and} \qquad F^P_{\red}(\alpha,u):=\overline{F(\alpha,u)}^{\|\cdot\|_{P,\red}},
$$
 where $\|b\|_{P}:=\sup_{p\in P}\|b\|_{L^p}$ and  $\|b\|_{P,\red}:=\sup_{p\in P}\|b\|_{L^p,\red}$, $b\in F(\alpha,u)$.
We denote by $\Lambda_{P}:F^P(\alpha,u)\to F^P_{\red}(\alpha,u)$  the canonical representation (which is the identity on $F(\alpha,u)$). 
For convenience we also put 
\begin{equation}\label{eq:emptyparameters_definition}
F^\emptyset(\alpha,u):=F^\emptyset_{\red}(\alpha,u):=F(\alpha,u)\quad   \text{  and  }\quad \Lambda_{\emptyset}:=\text{id}|_{F(\alpha,u)}.
\end{equation}

\end{defn}
\begin{rem}\label{rem:properties_of_F^P}
For $\emptyset \neq P\subseteq [1,\infty]$, the algebra $F^P(\alpha,u)$ is universal for representations of $F(\alpha,u)$ on ($\ell^{\infty}$-direct sums of) $L^p$-spaces  for all $p\in P$. 
By Lemma \ref{lem:reduced_from_other_reduced} and Corollary \ref{cor:Fourier_maps_on_full}, $F^P(\alpha,u)$ admits  the Fourier decomposition 
$\{E_{P,t}\}_{t\in G}$ and $\bigcap_{t\in G}\ker(E_{P,t})=\ker\Lambda_P$.
Similarly,  $F^P_{\red}(\alpha,u)$ is a reduced crossed product that can be  isometrically represented on the $\ell^{\infty}$-direct sum of spaces $\ell^p(X\times G)$, $p\in P$.
When $P=\{p\}$ is a singleton we recover the $L^p$-operator algebra crossed products from Definition \ref{def:L^p_crossed_products}. When $P=\{p,q\}$ with $1/p+1/q=1$ we get the Banach $*$-algebra crossed products
from Example \ref{ex:Banach_*-algebras}. In general, $F^P(\alpha,u)$ is a Banach $*$-algebra whenever the set $1/P:=\{1/p:p\in P\}$ is symmetric with respect to $1/2$. If $\{1,\infty\}\subseteq P$, then $F^P(\alpha,u)=F^P_{\red}(\alpha,u)=F_{I}(\alpha,u)$ is
the Hahn's crossed product, by Remark \ref{rem:consequences_of_the_groupoid_paper}.
\end{rem}
\begin{cor}\label{cor:properties_of_F^P}
 For any non-empty $P\subseteq [1,\infty]$ and any $b\in F(\alpha,u)$ we have
$$
\|b\|_{P,\red}=\sup\{\|\Ind_p(\pi)(b)\|: \pi:C_0(X)\to B(E) \text{ is a non-degenerete representation } 
$$
$$
   \qquad \qquad   \qquad  \text{ where $E$ is an $L^p$-space for $p\in P\setminus \{\infty\}$ or a $C_0$-space, if $\infty\in P$}\}.
$$
If $P\subseteq \{1,\infty\}$, then $F^P(\alpha,u)=F^P_{\red}(\alpha,u)$. If the action $\varphi$ is amenable, then $F^P(\alpha,u)=F^P_{\red}(\alpha,u)$ for any $P\subseteq [1,\infty]$.
\end{cor}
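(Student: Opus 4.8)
The plan is to prove the three assertions of Corollary \ref{cor:properties_of_F^P} in turn, reducing each to results already established for singleton parameter sets. First I would establish the formula for $\|b\|_{P,\red}$ by unwinding the definitions. By Definition \ref{def:L^P_crossed_products} we have $\|b\|_{P,\red}=\sup_{p\in P}\|b\|_{L^p,\red}=\sup_{p\in P}\|\Lambda_p(b)\|$, so it suffices to apply Proposition \ref{prop:reduced_norm_is_induced} termwise. For each fixed $p\in P\setminus\{\infty\}$ that proposition gives $\|\Lambda_p(b)\|=\max\{\|\Ind_p(\pi)(b)\|:\pi$ non-degenerate on an $L^p$-space$\}$, and for $p=\infty$ it gives the analogous identity over $C_0$-spaces. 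Taking the supremum over $p\in P$ and interchanging the two suprema yields exactly the claimed formula; this step is essentially bookkeeping.

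For the second assertion, that $F^P(\alpha,u)=F^P_{\red}(\alpha,u)$ when $P\subseteq\{1,\infty\}$, the key is that this equality already holds for the singletons $P=\{1\}$ and $P=\{\infty\}$ by Remark \ref{rem:consequences_of_the_groupoid_paper}, where $\Lambda_1$ and $\Lambda_\infty$ are stated to be isometric isomorphisms. I would argue that for any $b\in F(\alpha,u)$,
$$
\|b\|_{P}=\sup_{p\in P}\|b\|_{L^p}=\sup_{p\in P}\|b\|_{L^p,\red}=\|b\|_{P,\red},
$$
where the middle equality is precisely $\|b\|_{L^1}=\|b\|_{L^1,\red}$ and $\|b\|_{L^\infty}=\|b\|_{L^\infty,\red}$ applied to whichever elements of $\{1,\infty\}$ lie in $P$. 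Since $\Lambda_P$ is the completion map with respect to these coinciding seminorms, it is an isometric isomorphism.

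For the third assertion, concerning amenable actions, the central input is Theorem \ref{thm:ambenability_reduced_equals_full}, which gives $\|b\|_{L^p}=\|\Lambda_p(b)\|=\|b\|_{L^p,\red}$ for every $p\in(1,\infty)$ whenever $\varphi$ is amenable. Combined with the already-known equalities at $p=1$ and $p=\infty$ from Remark \ref{rem:consequences_of_the_groupoid_paper}, we obtain $\|b\|_{L^p}=\|b\|_{L^p,\red}$ for \emph{every} $p\in[1,\infty]$. Taking the supremum over any $P\subseteq[1,\infty]$ gives $\|b\|_P=\|b\|_{P,\red}$, so $\Lambda_P$ is isometric and hence $F^P(\alpha,u)=F^P_{\red}(\alpha,u)$.

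The only genuine subtlety, and the step I would watch most carefully, is the interchange of suprema in the first part and the legitimacy of extending a pointwise-on-$F(\alpha,u)$ equality of seminorms to an identification of completions; both are formal once one notes that all the seminorms in question dominate nothing larger than $\sum_{t}\|b(t)\|_\infty$ and that the completion functor respects coincidence of seminorms. No new construction is required: every ingredient is quoted from Proposition \ref{prop:reduced_norm_is_induced}, Theorem \ref{thm:ambenability_reduced_equals_full}, and Remark \ref{rem:consequences_of_the_groupoid_paper}, so the corollary is a clean assembly of prior results rather than a fresh argument.
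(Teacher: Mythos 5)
Your proposal is correct and follows essentially the same route as the paper: the norm formula by applying Proposition \ref{prop:reduced_norm_is_induced} pointwise in $p$ and taking suprema, the case $P\subseteq\{1,\infty\}$ from the isometric identities of Remark \ref{rem:consequences_of_the_groupoid_paper}, and the amenable case from Theorem \ref{thm:ambenability_reduced_equals_full} combined with the endpoint cases. The only (trivial) omission is that for the last two assertions the paper also allows $P=\emptyset$, which is handled by the convention $F^\emptyset(\alpha,u)=F^\emptyset_{\red}(\alpha,u)=F(\alpha,u)$ in \eqref{eq:emptyparameters_definition}.
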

\begin{proof} The formula for the norm follows from Proposition \ref{prop:reduced_norm_is_induced}. 
The statement when $P \subseteq \{1,\infty\}$ holds by Remark \ref{rem:consequences_of_the_groupoid_paper} and Definition \ref{eq:emptyparameters_definition}.
When $\varphi$ is amenable, then full equals to reduced by Theorem \ref{thm:ambenability_reduced_equals_full}.
\end{proof}
\section{Topological freeness and intersection properties}
\label{sec:the_main_result}

The term topological freeness was probably coined by Tomiyama. However, the condition appeared much earlier in the work of Zeller-Meier where it was used to characterise when the coefficient algebra is maximal abelian in the  reduced $C^*$-algebra crossed product. We generalise this to Banach algebras.
\begin{defn}[cf. {\cite[Definition 2.1(c)]{Tomiyama}}]
We call $\varphi$ \emph{topologically free} if the sets  $\{ x\in X: \varphi_t(x)=x\}$, $t\in G\setminus\{1\}$, have empty interiors.
\end{defn}

\begin{prop}[cf. {\cite[Proposition 4.14]{Zeller-Meier}}]
Let $F_{\RR}(\alpha,u)$ be a reduced crossed product. The algebra $C_0(X)$ is a maximal commutative subalgebra of $F_{\RR}(\alpha,u)$ if and only if $\varphi$ is topologically free.
\end{prop}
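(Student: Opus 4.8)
The plan is to characterise precisely when the relative commutant of $C_0(X)$ inside $F_{\RR}(\alpha,u)$ reduces to $C_0(X)$ itself, since a commutative subalgebra is maximal commutative exactly when it equals its own relative commutant $\{b\in F_{\RR}(\alpha,u): ab=ba \text{ for all } a\in C_0(X)\}$. I would analyse this commutant entirely through the Fourier decomposition $\{E_t^{\RR}\}_{t\in G}$ supplied by Lemma \ref{lem:Fourier_decomposition_maps}, exploiting that for a reduced crossed product (Definition \ref{def:reduced_crossed_product}) an element is zero if and only if all its Fourier coefficients vanish.

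First I would take $b\in F_{\RR}(\alpha,u)$ commuting with every $a\in C_0(X)$ and apply $E_t^{\RR}$ to the identity $ab=ba$. The twisted module relations \eqref{eq:twisted_module_maps} give $E_t^{\RR}(ab)=aE_t^{\RR}(b)$ and $E_t^{\RR}(ba)=E_t^{\RR}(b)\alpha_t(a)$, so the commutation hypothesis yields $aE_t^{\RR}(b)=E_t^{\RR}(b)\alpha_t(a)$ for all $a\in C_0(X)$ and all $t\in G$. Reading this as an identity of functions and evaluating at $x\in X$ gives $E_t^{\RR}(b)(x)\big(a(x)-a(\varphi_{t^{-1}}(x))\big)=0$. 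Whenever $\varphi_{t^{-1}}(x)\neq x$ one separates $x$ from $\varphi_{t^{-1}}(x)$ by a suitable $a\in C_0(X)$, forcing $E_t^{\RR}(b)(x)=0$; hence each coefficient $E_t^{\RR}(b)$ is a continuous function supported in the closed fixed-point set $\{x:\varphi_t(x)=x\}$.

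For the forward implication I would now invoke topological freeness: for $t\neq 1$ this fixed-point set has empty interior, and a continuous function supported on a closed set with empty interior vanishes identically, so $E_t^{\RR}(b)=0$ for all $t\neq 1$. Then $b$ and $E_1^{\RR}(b)\in C_0(X)$ have identical Fourier coefficients, whence $b=E_1^{\RR}(b)\in C_0(X)$ by reducedness; thus the relative commutant equals $C_0(X)$ and $C_0(X)$ is maximal commutative. For the converse I argue contrapositively: if some $\varphi_t$ with $t\neq 1$ has fixed-point set with nonempty interior $U$, I pick a nonzero $a\in C_0(X)$ with $\supp(a)\subseteq U$ and set $b:=a\delta_t$. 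Since $a$ is supported where $\varphi_t$ acts trivially, a direct check with the multiplication formula and the normalisation $u(1,t)=u(t,1)=1$ shows $(c\delta_1)b=b(c\delta_1)$ for every $c\in C_0(X)$, so $b$ lies in the relative commutant; yet $E_t^{\RR}(b)=a\neq 0$ with $t\neq 1$ shows $b\notin C_0(X)$, so $C_0(X)$ is not maximal commutative.

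The routine parts are the verification of the two module relations and of the commutation of $a\delta_t$ with $C_0(X)$. The one genuinely load-bearing step is the passage from ``continuous and supported in a closed set with empty interior'' to ``identically zero'': this is exactly where topological freeness enters, and it is the crux of both directions.
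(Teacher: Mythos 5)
Your proof is correct and takes essentially the same route as the paper's: both hinge on the module relations \eqref{eq:twisted_module_maps} forcing $\supp(E_t^{\RR}(b))\subseteq\{x\in X:\varphi_t(x)=x\}$ for any $b$ commuting with $C_0(X)$, together with the counterexample $b=a\delta_t$ with $a$ supported in the interior of a fixed-point set. The only cosmetic difference is that you phrase the implication from topological freeness directly (all coefficients $E_t^{\RR}(b)$, $t\neq 1$, vanish, so $b\in C_0(X)$ by reducedness) whereas the paper argues the contrapositive.
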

\begin{proof}
If $\varphi$ is not topologically free, then there is $t\in G\setminus \{1\}$ and a non-zero $a\in C_0(X)$ with
$\supp(a):=\overline{\{x:a(x)\neq 0\}}\subseteq \{x\in X:\varphi_t(x)=x\}$. Then  $b:=av_t\notin C_0(X)$ and $b$ commutes with every $f\in C_0(X)$ because 
$ b\cdot  f= av_t\cdot f= a\alpha_t(f) v_{t}=f a v_t= f \cdot b$. So $C_0(X)$ is not maximal abelian subalgebra of $F_{\RR}(\alpha,u)$ (here $F_{\RR}(\alpha,u)$ does  not need to be reduced).
Conversely, assume that  $C_0(X)$ is not a maximal abelian subalgebra of $F_{\RR}(\alpha,u)$, and let $b\in F_{\RR}(\alpha,u) \setminus C_0(X)$ commute with all elements in $C_0(X)$. 
Since $b\not\in C_0(X)$ and $F_{\RR}(\alpha,u)$ is reduced, there is $t\in G\setminus \{1\}$ such that $E_t(b)\neq 0$. For any $f\in C_0(X)$, using \eqref{eq:twisted_module_maps} 
and that $b$ commutes with $f$ we get $f E_t(b)= E_t(fb)= E_t(bf)=E_t(b)\alpha_t(f)$, which implies that $\supp(E_t(b))\subseteq  \{x\in X:\varphi_t(x)=x\}$.
Hence $\varphi$ is not topologically free.
\end{proof}

Topological freeness implies what in the context of $C^*$-algebras  R\o rdam  calls  the \emph{pinching property}, see \cite[Definition 3.13]{Rordam}. 
We use it to show that any Banach algebra completion of $F(\alpha,u)$  (in fact any Hausdorff completion into which $C_0(X)$ embeds)  admits the Fourier decomposition.

\begin{lem}[pinching property]\label{lem:pinching_property}
If $\varphi$ is topologically free, then for every $b\in F(\alpha,u)$, $t\in G$, and $\varepsilon >0$ 
there exists $h\in C_c(X)^+$ of norm one such that 
$$
\| E_t(b)\|\leq\| hE_t(b)h\|+\varepsilon, \quad \|hE_t(b)h-hb (h\delta_t)^*\|\leq\varepsilon.
$$
\end{lem}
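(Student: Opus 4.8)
The plan is to reduce to finitely supported $b$ and then build $h$ as a bump function whose support is \emph{wandering} for the finitely many group elements that actually occur. First I would fix a finite $S'\subseteq G$ with $t\in S'$ and $\sum_{s\notin S'}\|b(s)\|_\infty<\varepsilon$, and set $b'=\sum_{s\in S'}b(s)\delta_s$, so that $E_t(b')=E_t(b)$ and $\|b-b'\|_1<\varepsilon$. Any $h\in C_c(X)^+$ with $\|h\|_\infty=1$ satisfies $\|h\delta_1\|_1=\|(h\delta_t)^*\|_1=1$, since the cocycle values are unimodular; so by submultiplicativity of $\|\cdot\|_1$ we get $\|h(b-b')(h\delta_t)^*\|_1\le\varepsilon$. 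Hence it suffices to produce $h$ with $\|E_t(b')\|\le\|hE_t(b')h\|+\varepsilon$ and $hb'(h\delta_t)^*=hE_t(b')h$, because then $\|hE_t(b)h-hb(h\delta_t)^*\|_1=\|h(b-b')(h\delta_t)^*\|_1\le\varepsilon$ and the first inequality transfers verbatim.

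The key computation is to expand $hb'(h\delta_t)^*=\sum_{s\in S'}(hb'(s)\delta_s)(h\delta_t)^*$ via the product and involution formulas of \eqref{eq:crossed_product_operations}: the $s$-term lands in grade $st^{-1}$ with coefficient a unimodular multiple of $h\,b'(s)\,\alpha_{st^{-1}}(h)$. For $s=t$ the grade is $1$ and, using normalisation together with \eqref{eq:cocycle_identity} in the form $\alpha_t(u(t^{-1},t))=u(t,t^{-1})$, the unimodular factor collapses to $1$; thus the grade-$1$ coefficient is exactly $hb'(t)h=hE_t(b')h$. The remaining coefficients, for $s\in S:=\{s\in S':s\neq t\}$, are supported in $\supp h\cap\varphi_{st^{-1}}(\supp h)$, because $h\cdot\alpha_{st^{-1}}(h)$ vanishes off this set and the unimodular factors play no role in vanishing. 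So the identity $hb'(h\delta_t)^*=hE_t(b')h$ holds precisely when $\varphi_g(\supp h)\cap\supp h=\emptyset$ for every $g$ in the finite set $\{st^{-1}:s\in S\}\subseteq G\setminus\{1\}$.

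Here topological freeness enters. The set $\{x:|b(t)(x)|>\|b(t)\|_\infty-\varepsilon\}$ is open and non-empty, while $\bigcup_{s\in S}\{x:\varphi_{st^{-1}}(x)=x\}$ is a finite union of closed sets with empty interior, hence nowhere dense, so its complement is dense and meets the open set. Choosing $x_0$ in the intersection, I would then use that $X$ is Hausdorff and each $\varphi_{st^{-1}}(x_0)\neq x_0$ to pick a compact neighbourhood $V$ of $x_0$ with $\varphi_{st^{-1}}(V)\cap V=\emptyset$ for all $s\in S$, and a function $h\in C_c(X)^+$ supported in $V$ with $\|h\|_\infty=h(x_0)=1$. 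Then the off-diagonal coefficients vanish, giving $hb'(h\delta_t)^*=hE_t(b')h$, while $\|hE_t(b')h\|_\infty\ge|b(t)(x_0)|>\|b(t)\|_\infty-\varepsilon=\|E_t(b')\|-\varepsilon$. The main obstacle is exactly this simultaneous selection: the support of $h$ must be small enough to be wandering for all relevant translates, yet large enough to still capture a near-maximal value of $|b(t)|$ — and this is precisely what topological freeness, through the nowhere-density of the fixed-point sets, guarantees.
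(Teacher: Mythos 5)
Your proposal is correct, but it takes a genuinely different route from the paper's. The paper disposes of the case $t=1$ by citing (the proof of) \cite[Proposition 2.4]{Exel_Laca_Quigg}, and then reduces general $t$ to $t=1$: for a contractive approximate unit $(\mu_i)$ in $C_0(X)$ it sets $b_i:=b\cdot(\mu_i\delta_t)^*$, uses $E_1(b_i)\to E_t(b)$ to pass the $t$-th Fourier coefficient into grade one, applies the $t=1$ case to $b_i$ with tolerance $\varepsilon/3$, and assembles the two estimates, using $\lim_i b_i h=b(h\delta_t)^*$. You instead prove the statement directly for arbitrary $t$, in effect unfolding and generalising the Exel--Laca--Quigg bump-function argument to the twisted Banach setting: you truncate $b$ to $b'$ supported on a finite set $S'\ni t$ with $\ell^1$-tail below $\varepsilon$, expand $hb'(h\delta_t)^*$ grade by grade, note that the grade-$st^{-1}$ coefficient is a unimodular multiple of $h\,b'(s)\,\alpha_{st^{-1}}(h)$, and check that at $s=t$ the twist cancels exactly via $\alpha_t(u(t^{-1},t))=u(t,t^{-1})$, which indeed follows from \eqref{eq:cocycle_identity} and normalisation --- this cancellation is the one point where the twisted setting requires care beyond the untwisted $C^*$-source, and you verify it correctly. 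The fixed-point sets of $\varphi_{st^{-1}}$, $s\in S'\setminus\{t\}$, are closed with empty interior, so their finite union is nowhere dense and you can pick $x_0$ with $|b(t)(x_0)|>\|b(t)\|_\infty-\varepsilon$ moved by each relevant homeomorphism, then a compact neighbourhood $V$ with $\varphi_{st^{-1}}(V)\cap V=\emptyset$ and a Urysohn bump $h$ with $h(x_0)=\|h\|_\infty=1$; all of this is sound. What your route buys: it is self-contained (no citation, no approximate-unit limit) and yields the \emph{exact} identity $hb'(h\delta_t)^*=hE_t(b')h$ on the truncation, so both tolerances come only from the $\ell^1$-tail of $b$ and the choice of $x_0$, with the error controlled in $\|\cdot\|_1$, which dominates every crossed-product seminorm --- exactly what is needed when the lemma is fed into Proposition \ref{prop:Fourier_decomposition}. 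What the paper's route buys is brevity: the grade computation and the bump argument are outsourced, and multiplying by $(\mu_i\delta_t)^*$ means the twist never has to be confronted explicitly. One cosmetic remark: your phrase ``holds precisely when'' should be ``holds whenever'' --- disjointness of $\supp h$ from its translates is sufficient but not necessary, since the coefficients $b'(s)$ could vanish on the overlap --- but nothing in your argument uses the converse.
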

\begin{proof}
For $t=1$ this follows by (the proof of)  \cite[Proposition 2.4]{Exel_Laca_Quigg}. For general $t\in G$ we may use that $E_t(b)=\lim_{i} E_1(b \cdot  (\mu_i \delta_{t})^*)$ 
for  any contractive approximate unit $(\mu_i)$ in $C_0(X)$. Namely, putting $b_i:=b \cdot  (\mu_i \delta_{t})^*$ 
 from some point on we have $\|E_t(b)-E_1(b_i)\|\leq \varepsilon/3$, and by the previous remark we may find a norm one $h\in C_c(X)^+$ such that 
$
\|E_1(b_i)\|\leq \| hE_1(b_i)h\|+\varepsilon/3$ and $\|hE_1(b_i)h-hb_ih\|\leq\varepsilon/3.
$
Using these three inequalites we get
$$\| E_t(b)\|\leq \varepsilon/3+ \|E_1(b_i)\|\leq \| hE_1(b_i)h\|+2\varepsilon /3 \leq \| hE_t(b)h\|+\varepsilon, $$
$$
\|hE_t(b)h-h b_ih\|\leq\varepsilon/3 +\|hE_1(b_i)h-hb_ih\|\leq 2\varepsilon/3.
$$
Since  $\lim_{i}b_ih =b (h\delta_t)^*$ we get the assertion.
\end{proof}
\begin{lem}[{\cite[Theorem 10]{Bonsall}}]\label{lem:incompressibility}
If $A$ is a $C^*$-algebra, so in particular, if $A=C_0(X)$, then any injective representation $\psi: A\to B$ into a Banach algebra $B$ is isometric.
\end{lem}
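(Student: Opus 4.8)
The plan is to prove directly the estimate $\|a\|\le\|\psi(a)\|$ for every $a\in A$; combined with contractivity of $\psi$ (which gives $\|\psi(a)\|\le\|a\|$) this yields $\|\psi(a)\|=\|a\|$. The difficulty is that the target $B$ carries neither an involution nor an order structure, so the $C^*$-identity cannot be transported to $B$ and one cannot argue that $\psi$ ``preserves positivity'' or ``preserves spectra''. The idea is instead to use the continuous functional calculus \emph{inside} $A$ to manufacture, for each $a$, a positive element that peaks exactly where $\|a\|$ is attained, and then to exploit multiplicativity and contractivity of $\psi$ in the only places where they are available.

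First I would reduce to a positive element by setting $h:=a^*a\in A$, so that $h\ge 0$ and $M^2:=\|h\|=\|a\|^2$ with $M^2\in \mathrm{sp}(h)\subseteq[0,M^2]$. Fix $\varepsilon>0$. Using functional calculus I would pick continuous functions $f,f':[0,\infty)\to[0,1]$ with $f(0)=f'(0)=0$, with $f\equiv 1$ on a neighbourhood of $M^2$ that contains $\supp(f')$, with $f'(M^2)=1$, and with $\supp(f)\subseteq[M^2-\varepsilon,M^2]$. Put $g:=f(h)$ and $g':=f'(h)$ in $A$. Then $g,g'\ge 0$, $\|g\|=f(M^2)=1$, $g'\neq 0$ (because $M^2\in\mathrm{sp}(h)$), and $gg'=g'$; moreover the function $t\mapsto f(t)(t-M^2+\varepsilon)$ takes values in $[0,\varepsilon]$ on $\mathrm{sp}(h)$, so that $\|gh-(M^2-\varepsilon)g\|\le\varepsilon$.

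The crucial normalisation comes from the relation $gg'=g'$: applying $\psi$ gives $\psi(g')=\psi(g)\psi(g')$, and since $\psi$ is injective and $g'\neq 0$ we have $\psi(g')\neq 0$, whence $\|\psi(g')\|\le\|\psi(g)\|\,\|\psi(g')\|$ forces $\|\psi(g)\|\ge 1$; as $\psi$ is contractive and $\|g\|=1$ we conclude $\|\psi(g)\|=1$. Using $\psi(gh)=\psi(g)\psi(a^*)\psi(a)$ together with the triangle inequality, I would then estimate
\[
M^2-2\varepsilon \;\le\; (M^2-\varepsilon)\|\psi(g)\|-\big\|\psi\bigl(gh-(M^2-\varepsilon)g\bigr)\big\| \;\le\; \|\psi(gh)\| \;\le\; \|\psi(g)\|\,\|\psi(a^*)\|\,\|\psi(a)\| \;=\; \|\psi(a^*)\|\,\|\psi(a)\|.
\]
Letting $\varepsilon\to 0$ gives $\|\psi(a^*)\|\,\|\psi(a)\|\ge M^2=\|a\|^2$. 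Since contractivity yields $\|\psi(a^*)\|\le\|a\|$ and $\|\psi(a)\|\le\|a\|$, both factors must equal $\|a\|$; in particular $\|\psi(a)\|=\|a\|$.

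The main obstacle is precisely the absence of an involution and of positivity on $B$, and it is circumvented by three moves: replacing $a$ with $a^*a$ so that $\|a\|^2$ is read off as the top of a real spectrum; producing the normalised local unit $g$ entirely inside $A$ through functional calculus; and invoking submultiplicativity in $B$ only in the two places it actually helps, namely to get $\|\psi(g)\|\ge 1$ and in the final product bound $\|\psi(a^*)\|\,\|\psi(a)\|$. For the case we actually use, $A=C_0(X)$, all of this is elementary: $h=|a|^2$ and $g$ is simply a bump function concentrated near a point where $|a|$ is almost maximal, so no noncommutative functional calculus is needed.
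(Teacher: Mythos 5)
Your argument is correct, and it takes a genuinely different (more self-contained) route than the paper: the paper offers no proof of this lemma at all, quoting it directly from Bonsall's theorem that the norm of a $C^*$-algebra is minimal among all algebra norms on it. Your estimate $\|a\|\le\|\psi(a)\|$ is exactly the instance of that theorem needed here, applied to the algebra norm $a\mapsto\|\psi(a)\|$ (a norm by injectivity, submultiplicative because $\psi$ is a homomorphism into a Banach algebra). Your mechanism --- building inside $A$ a norm-one local unit $g=f(h)$ absorbing a nonzero spectral bump $g'=f'(h)$ at the top of $\mathrm{sp}(a^*a)$, extracting $\|\psi(g)\|=1$ from $gg'=g'$ together with injectivity, contractivity and submultiplicativity, and then squeezing $M^2-2\varepsilon\le\|\psi(a^*)\|\,\|\psi(a)\|$ --- is sound, and you correctly confine involution, order and functional calculus to $A$, the only place they exist; the final step from $\|\psi(a^*)\|\,\|\psi(a)\|\ge\|a\|^2$ and the two contractivity bounds to $\|\psi(a)\|=\|a\|$ is also valid. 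What the citation buys the paper is brevity and a stronger statement (minimality against arbitrary algebra norms, not just contractive complete ones, which your proof does not give since you use contractivity both for $\|\psi(g)\|=1$ and for $\|\psi(gh-(M^2-\varepsilon)g)\|\le\varepsilon$); what your proof buys is a transparent, elementary argument which, in the commutative case $A=C_0(X)$ actually used in the paper, reduces to bump functions as you note.

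One small repair: as written, the conditions ``$f\equiv1$ on a neighbourhood of $M^2$'' and ``$\supp(f)\subseteq[M^2-\varepsilon,M^2]$'' are incompatible on $[0,\infty)$, since the support of such an $f$ must contain points greater than $M^2$. Take instead $\supp(f)\subseteq[M^2-\varepsilon,M^2+\varepsilon]$: the functional calculus only sees values of $f$ on $\mathrm{sp}(h)\subseteq[0,M^2]$, so the key bound $\sup_{t\in\mathrm{sp}(h)}f(t)\,(t-M^2+\varepsilon)\le\varepsilon$, hence $\|gh-(M^2-\varepsilon)g\|\le\varepsilon$, survives unchanged, as do $\|g\|=1$ and $gg'=g'$. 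You should also dispose of $a=0$ separately and take $\varepsilon<M^2$ so that $M^2-\varepsilon>0$. With these cosmetic changes the proof is complete.
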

\begin{prop}\label{prop:Fourier_decomposition}
Assume that $\varphi$ is topologically free and let $F_{\RR}(\alpha,u)$ be a crossed product.
For any representation $\psi:F_{\RR}(\alpha,u)\to B$  which is injective on $C_0(X)$ and has dense range,   
there are contractive linear maps $E^{\psi}_{t}:B\to C_0(X)$, $t\in G$,
such that $E^{\psi}_{t}(\psi(b))=E_t^{\RR}(b)$ for  $b\in F_{\RR}(\alpha,u)$, $t\in G$. In particular, $\ker\psi\subseteq \bigcap_{t\in G}\ker(E_t^{\RR})$.
\end{prop}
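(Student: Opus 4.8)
The plan is to reduce the whole statement to the single norm estimate
\[
\|E_t^{\RR}(b)\|\le \|\psi(b)\|_{B},\qquad b\in F_{\RR}(\alpha,u),\ t\in G,
\]
and then to define $E_t^{\psi}$ on the dense subspace $\psi(F_{\RR}(\alpha,u))\subseteq B$ by $E_t^{\psi}(\psi(b)):=E_t^{\RR}(b)$. This estimate does all the work at once: if $\psi(b)=0$ it forces $E_t^{\RR}(b)=0$, so the assignment is well defined, and it makes the assignment contractive, so it extends uniquely to a contractive linear map $E_t^{\psi}\colon B\to C_0(X)$ by density of the range of $\psi$ together with completeness of $C_0(X)$. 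The inclusion $\ker\psi\subseteq\bigcap_{t\in G}\ker(E_t^{\RR})$ is then immediate, since $\psi(b)=0$ yields $\|E_t^{\RR}(b)\|\le 0$ for every $t$. Because $E_t^{\RR}$ and $\psi$ are both continuous for $\|\cdot\|_{\RR}$ and $F(\alpha,u)$ is dense, it suffices to prove the estimate for $b\in F(\alpha,u)$.

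I would first record two preliminary facts. Since $\psi$ is injective on the $C^*$-algebra $C_0(X)$, Lemma \ref{lem:incompressibility} shows that $\psi|_{C_0(X)}$ is isometric, so $\|\psi(a)\|_B=\|a\|_\infty$ for $a\in C_0(X)$; in particular a norm-one $h\in C_c(X)^+$ gives $\|\psi(h)\|_B=1$. Secondly, because $F_{\RR}(\alpha,u)$ admits the Fourier decomposition we have $\|\cdot\|_{\RR}\le\|\cdot\|_1$ on $F(\alpha,u)$ (Remark \ref{rem:Banach_algebra_with_Fourier_decomposition}), and $\psi$ is contractive for $\|\cdot\|_{\RR}$; hence $\|\psi((h\delta_t)^*)\|_B\le\|(h\delta_t)^*\|_{\RR}\le\|(h\delta_t)^*\|_1=\|h\|_\infty=1$, using that $\alpha_{t^{-1}}$ is isometric and $u$ is unitary-valued.

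The heart of the argument is the pinching property. Fixing $b\in F(\alpha,u)$, $t\in G$ and $\varepsilon>0$, Lemma \ref{lem:pinching_property} produces a norm-one $h\in C_c(X)^+$ with $\|E_t(b)\|\le\|hE_t(b)h\|+\varepsilon$ and $\|hE_t(b)h-hb(h\delta_t)^*\|\le\varepsilon$, the latter measured in $F(\alpha,u)$ and hence dominating $\|\cdot\|_{\RR}$. Since $hE_t(b)h\in C_0(X)$, the first preliminary fact gives $\|hE_t(b)h\|=\|\psi(hE_t(b)h)\|_B$; applying $\psi$ to the pinching estimate and using the factorisation $\psi(hb(h\delta_t)^*)=\psi(h)\psi(b)\psi((h\delta_t)^*)$ yields
\[
\|hE_t(b)h\|\le \varepsilon+\|\psi(h)\|_B\,\|\psi(b)\|_B\,\|\psi((h\delta_t)^*)\|_B\le \|\psi(b)\|_B+\varepsilon .
\]
Chaining this with the first pinching inequality gives $\|E_t(b)\|\le\|\psi(b)\|_B+2\varepsilon$, and letting $\varepsilon\to0$ establishes the estimate on $F(\alpha,u)$, whence on $F_{\RR}(\alpha,u)$ by continuity.

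I expect the only delicate point to be the bookkeeping of norms: one must be sure that the $F(\alpha,u)$-closeness coming from pinching survives application of $\psi$, which is exactly where $\|\cdot\|_{\RR}\le\|\cdot\|_1$ and contractivity of $\psi$ are used, and that the compressing factors $\psi(h)$ and $\psi((h\delta_t)^*)$ are genuinely contractive in $B$. Topological freeness enters only through the pinching lemma, while the isometry of $\psi|_{C_0(X)}$ furnished by Lemma \ref{lem:incompressibility} is precisely what lets us read the scalar quantity $\|hE_t(b)h\|$ off inside $B$.
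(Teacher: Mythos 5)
Your proposal is correct and matches the paper's own proof essentially step for step: both reduce everything to the estimate $\|E_t^{\RR}(b)\|\le\|\psi(b)\|_B$ on the dense subalgebra $F(\alpha,u)$, established by combining the pinching property (Lemma \ref{lem:pinching_property}) with the isometry of $\psi|_{C_0(X)}$ from Lemma \ref{lem:incompressibility}, and then define $E_t^{\psi}$ on the dense range and extend by continuity. In fact you spell out more of the bookkeeping than the paper does, e.g.\ the contractivity $\|\psi((h\delta_t)^*)\|_B\le\|(h\delta_t)^*\|_1=1$ and the domination $\|\cdot\|_{\RR}\le\|\cdot\|_1$ used to push the pinching estimate through $\psi$, which the paper leaves implicit.
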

\begin{proof} Fix $t\in G$. Take any $b\in F(\alpha,u)\subseteq F_{\RR}(\alpha,u)$ and $\varepsilon>0$.
Let  $h\in C_c(X)^+$ be as in Lemma \ref{lem:pinching_property}. Using  that $\psi$ is isometric on $C_0(X)$, by Lemma \ref{lem:incompressibility}, we get
\begin{align*}
\|E_t^{\RR}(b)\| &= \|E_t(b)\|\leq \| h E_t(b)h\|+\varepsilon=\|\psi(h E_t(b)h)\|+\varepsilon 
\\
&\leq\|\psi(h b (h\delta_t)^*)\|+2\varepsilon \leq\|\psi(b)\|+2\varepsilon.
\end{align*}
This  shows that $\|E_t^{\RR}(b)\|\leq  \|\psi(b)\|$. Hence  the formula $E^{\psi}_{t}(\psi(b)):=E_t^{\RR}(b)$, $b\in F(\alpha,u)$, yields a well defined linear contraction 
$\psi(F(\alpha,u))\to C_0(X)$ which extends to the contraction $E^{\psi}_{t}:B\to C_0(X)$ satisfying $E^{\psi}_{t}(\psi(b))=E_t^{\RR}(b)$ for
 $b\in F_{\RR}(\alpha,u)$. 
\end{proof}
The relationship between topological freeness and  ideals in $C^*$-crossed products was probably first made explicit  by O'Donovan, for $\Z$-actions, see  \cite[Theorem 1.2.1]{OD}.
Sierakowski in \cite{Sierakowski} called the relevant condition the intersection property. 
\begin{defn}[cf. {\cite[Definitions 5.5, 5.6]{Kwa-Meyer}}]
  \label{def:visible}
	
  An inclusion of Banach algebras $A\subseteq B$ has the \emph{generalised     intersection property} if there is the largest  ideal
  $\Null$ in $B$  with $\Null\cap A=\{0\}$.  Then we
  call~\(\Null\) the \emph{hidden ideal} for $A\subseteq B$. If $\Null=\{0\}$, that is if for every non-zero ideal $J$ in $B$ we have $J\cap A \neq \{0\}$ 
	we say that $A\subseteq B$ has the \emph{intersection property} or that the subalgebra $A$ \emph{detects ideals} in the algebra $B$.
		\end{defn}
		\begin{rem}\label{rem:reduced_is_necessary} By the last part of Lemma \ref{lem:Fourier_decomposition_maps}, if $F_{\RR}(\alpha,u)$ is a Banach algebra crossed product and $C_0(X)$ detects ideals in $F_{\RR}(\alpha,u)$, then $F_{\RR}(\alpha,u)$
is necessarily reduced. 
\end{rem}
	Clearly	$A\subseteq B$  has the generalised intersection property with hidden ideal $\Null$  if and only if any representation 
$\pi:B\to C$ injective on $A$ descends to a (necessarily injective) representation $B/\Null\to C$. Thus Proposition \ref{prop:Fourier_decomposition} immediately gives

\begin{cor}\label{cor:general_generalised_intersection}
If $\varphi$ is topologically free, then for any crossed product $F_{\RR}(\alpha,u)$
the inclusion $C_0(X)\subseteq F_{\RR}(\alpha,u)$ has the generalised intersection property with the hidden ideal $\bigcap_{t\in G}\ker(E_t^{\RR})$; in particular, 
$C_0(X)$ detects ideals in all reduced crossed products.
\end{cor}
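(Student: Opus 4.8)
The plan is to produce $\Null:=\bigcap_{t\in G}\ker(E_t^{\RR})$ directly as the hidden ideal, with Proposition~\ref{prop:Fourier_decomposition} doing essentially all the work. First I would check that $\Null$ is a legitimate candidate: by the last part of Lemma~\ref{lem:Fourier_decomposition_maps}, $\Null$ is a (closed, two-sided) ideal in $F_{\RR}(\alpha,u)$ and it satisfies $\Null\cap C_0(X)=\{0\}$. So to establish the generalised intersection property of Definition~\ref{def:visible} it remains only to prove that $\Null$ is \emph{maximal} among the ideals meeting $C_0(X)$ trivially.

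For maximality, I would take an arbitrary ideal $J\subseteq F_{\RR}(\alpha,u)$ with $J\cap C_0(X)=\{0\}$ and pass to the quotient homomorphism $q\colon F_{\RR}(\alpha,u)\to F_{\RR}(\alpha,u)/J$. This $q$ is a contractive homomorphism onto a Banach algebra, hence has dense range, and since $\ker q=J$ meets $C_0(X)$ only in $0$, the map $q$ is injective on $C_0(X)$. Thus $q$ satisfies exactly the hypotheses of Proposition~\ref{prop:Fourier_decomposition} (applied with $\psi=q$), whose conclusion gives $J=\ker q\subseteq\bigcap_{t\in G}\ker(E_t^{\RR})=\Null$. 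Since $J$ was arbitrary, every ideal trivially intersecting $C_0(X)$ lies inside $\Null$, so $\Null$ is the largest such ideal; this is precisely the assertion that $C_0(X)\subseteq F_{\RR}(\alpha,u)$ has the generalised intersection property with hidden ideal $\Null$.

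The \emph{in particular} clause then follows with no further work: by Definition~\ref{def:reduced_crossed_product}, $F_{\RR}(\alpha,u)$ being reduced means exactly $\bigcap_{t\in G}\ker(E_t^{\RR})=\{0\}$, so the hidden ideal is $\{0\}$, which by Definition~\ref{def:visible} is the statement that $C_0(X)$ detects ideals. As for the main obstacle: there really is none left at this stage, because the entire difficulty has been front-loaded into Proposition~\ref{prop:Fourier_decomposition} (and through it into the pinching property, Lemma~\ref{lem:pinching_property}, which is where topological freeness is genuinely consumed, together with the incompressibility of $C_0(X)$ from Lemma~\ref{lem:incompressibility}). The only point worth checking is that the quotient map genuinely qualifies as a $\psi$ to which the proposition applies, i.e.\ that ``representation'' is read broadly enough to allow a contractive homomorphism onto a Banach algebra quotient rather than only maps into $\BB(E)$; this is already the reading fixed in the proposition and in the remark preceding the corollary, and in any case it is harmless since $F_{\RR}(\alpha,u)/J$ represents contractively on itself by left multiplication.
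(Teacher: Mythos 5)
Your proof is correct and follows essentially the same route as the paper: the paper obtains the corollary immediately from Proposition \ref{prop:Fourier_decomposition} via the remark that the generalised intersection property with hidden ideal $\Null$ amounts to every representation injective on $C_0(X)$ having kernel contained in $\Null$, and your application of that proposition to the quotient map $q\colon F_{\RR}(\alpha,u)\to F_{\RR}(\alpha,u)/J$ is exactly the rigorous unpacking of that remark. Your preliminary verification via Lemma \ref{lem:Fourier_decomposition_maps} that $\bigcap_{t\in G}\ker(E_t^{\RR})$ is an ideal meeting $C_0(X)$ trivially, and the deduction of ideal detection in the reduced case from Definition \ref{def:reduced_crossed_product}, likewise match the paper's intended argument.
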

When applied to   untwisted crossed products that have `$C_0(X)$-trivial representations', the implication in the above corollary  can be reversed. For $C^*$-algebraic crossed products this was proved 
in \cite[Theorem 4.1]{Kawa-Tomi}, \cite[Theorem 2]{Arch_Spiel}. We now generalise these results to crossed products $F^P(\alpha,u)$ where $P\subseteq [1,\infty]$.
If $P$ is non-empty,  we define the \emph{$C_0(X)$-trivial representation}  $\Lambda_{P}^{\tr}$  of $F^P (\alpha)$ as an extension of the $\ell^{\infty}$-direct sum  $\oplus_{p\in P}\Lambda_p^{\tr}$  of   representations from Example \ref{ex:trivial_rep}. We also put $\Lambda_{\emptyset}^{\tr}:=\Lambda_{[1,\infty]}^{\tr}|_{F (\alpha)}$ and recall that $F^\emptyset (\alpha)=F (\alpha)$.
\begin{thm}\label{thm:generalised_intersection_property}
Let $\alpha$ be an  action of a discrete group $G$ on $C_0(X)$. For any  $P\subseteq [1,\infty]$,  the following conditions are equivalent:
\begin{enumerate}
\item\label{enu:generalised_intersection_property1} The dual action $\varphi$ of $G$ on $X$ is topologically free.
\item\label{enu:generalised_intersection_property2} $C_0(X)\subseteq  F^P(\alpha)$  has the generalised intersection property with the hidden ideal being the kernel $\ker\Lambda_P$
of the regular representation $\Lambda_P:F^P (\alpha)\to F_{\red}^P (\alpha)$.
\item\label{enu:generalised_intersection_property3} We have  $\ker(\Lambda_P^{\tr})\subseteq\ker(\Lambda_P)$ where $\Lambda_{P}^{\tr}$ is the $C_0(X)$-trivial representation of  $F^P (\alpha)$.
\item\label{enu:generalised_intersection_property4} $C_0(X)$ detects ideals in $F^T(\alpha)$ for some (and hence all)  $T\subseteq \{1,\infty\}$.
\end{enumerate}
\end{thm}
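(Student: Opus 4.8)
The plan is to prove the cycle (1)$\Rightarrow$(2)$\Rightarrow$(3)$\Rightarrow$(1) together with the equivalence (1)$\Leftrightarrow$(4). The implications (1)$\Rightarrow$(2), (1)$\Rightarrow$(4) and (2)$\Rightarrow$(3) are essentially formal consequences of the machinery already assembled, whereas the two ``necessity'' implications (3)$\Rightarrow$(1) and (4)$\Rightarrow$(1) carry the real content and are proved simultaneously, by contraposition, using one explicit element. The point is that the genuinely hard analytic input for the sufficiency direction --- the pinching property without recourse to positivity --- has already been absorbed into Corollary \ref{cor:general_generalised_intersection}, so here the work is largely assembly plus the construction of a single counterexample.

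For (1)$\Rightarrow$(2) I would specialise Corollary \ref{cor:general_generalised_intersection} to the crossed product $F^P(\alpha)$, which admits the Fourier decomposition by Remark \ref{rem:properties_of_F^P}: topological freeness yields the generalised intersection property with hidden ideal $\bigcap_{t\in G}\ker(E_{P,t})$, and this ideal equals $\ker\Lambda_P$, again by Remark \ref{rem:properties_of_F^P}. The implication (1)$\Rightarrow$(4) is the same observation, noting that for $T\subseteq\{1,\infty\}$ one has $F^T(\alpha)=F^T_{\red}(\alpha)$ by Corollary \ref{cor:properties_of_F^P}, so these are reduced crossed products in which $C_0(X)$ detects ideals. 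For (2)$\Rightarrow$(3) the key remark is that the $C_0(X)$-trivial representation $\Lambda_P^{\tr}$ is injective on $C_0(X)$, since its restriction to $C_0(X)$ is the faithful multiplication representation on $\ell^p(X)$ (or $c_0(X)$); hence $\ker(\Lambda_P^{\tr})$ is an ideal with $\ker(\Lambda_P^{\tr})\cap C_0(X)=\{0\}$, and as (2) asserts that $\ker\Lambda_P$ is the \emph{largest} such ideal, we conclude $\ker(\Lambda_P^{\tr})\subseteq\ker\Lambda_P$.

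For the necessity direction I would argue contrapositively. Assuming $\varphi$ is not topologically free, fix $t_0\in G\setminus\{1\}$ whose fixed-point set has nonempty interior $U$, and pick by Urysohn a nonzero $a\in C_c(X)^+$ with $\supp(a)\subseteq U$. Put $b:=a\delta_{t_0}-a\delta_1$. The formula of Example \ref{ex:trivial_rep} gives $\Lambda_p^{\tr}(b)\xi(x)=a(x)\big(\xi(\varphi_{t_0^{-1}}(x))-\xi(x)\big)$, which vanishes identically because $a(x)\neq 0$ forces $x\in U$ and hence $\varphi_{t_0}(x)=x$; thus $b\in\ker(\Lambda_P^{\tr})$ for every $P$ and $b\in\ker(\Lambda_T^{\tr})$ for every $T\subseteq\{1,\infty\}$. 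On the other hand $E_{t_0}(b)=a\neq 0$, so $b\notin\bigcap_{t\in G}\ker(E_{P,t})=\ker\Lambda_P$, which contradicts (3) and proves (3)$\Rightarrow$(1). The same $b$ settles (4)$\Rightarrow$(1): since $\Lambda_T^{\tr}$ is injective on $C_0(X)$, the ideal $\ker(\Lambda_T^{\tr})$ meets $C_0(X)$ trivially, yet it is nonzero as it contains $b$ (which is nonzero in $F^T(\alpha)$ because $E_{T,t_0}(b)=a\neq 0$); hence $C_0(X)$ fails to detect ideals in $F^T(\alpha)$, contradicting (4).

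The argument being soft, the main obstacle is bookkeeping: one must verify that the single element $b$ works uniformly across all parameter sets, including the degenerate conventions $P=\emptyset$ (where $\Lambda_\emptyset=\mathrm{id}$ and $\Lambda_\emptyset^{\tr}=\Lambda_{[1,\infty]}^{\tr}|_{F(\alpha)}$) and the $C_0$-space case $\infty\in P$, and that the regular and trivial representations genuinely descend to the completions $F^P(\alpha)$ so that their kernels are honest ideals. The ``some (and hence all)'' clause in (4) then needs no extra work: when (1) fails the counterexample $\ker(\Lambda_T^{\tr})$ exists for \emph{every} $T\subseteq\{1,\infty\}$, while (1) conversely gives detection for every such $T$, so detection for one choice of $T$ is equivalent to (1) and hence to detection for all of them.
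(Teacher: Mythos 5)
Your proof is correct and takes essentially the same route as the paper: sufficiency via Corollary \ref{cor:general_generalised_intersection} together with Remark \ref{rem:properties_of_F^P}, and necessity by contraposition using an explicit element annihilated by the $C_0(X)$-trivial representation but with a nonzero Fourier coefficient (your witness $a\delta_{t_0}-a\delta_1$ is the same device as the paper's $a-\delta_{t^{-1}}a$). The only cosmetic difference is that the paper dispatches condition \ref{enu:generalised_intersection_property4} by observing it is condition \ref{enu:generalised_intersection_property2} for $P=T$ since $F^T(\alpha)=F^T_{\red}(\alpha)$ by Corollary \ref{cor:properties_of_F^P}, whereas you argue the equivalence of \ref{enu:generalised_intersection_property1} and \ref{enu:generalised_intersection_property4} directly; both are immediate.
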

\begin{proof}
\ref{enu:generalised_intersection_property1} implies \ref{enu:generalised_intersection_property2} by Corollary \ref{cor:general_generalised_intersection} and 
Remark \ref{rem:properties_of_F^P}.
 \ref{enu:generalised_intersection_property2} implies \ref{enu:generalised_intersection_property3} because $\ker(\Lambda_P^{\tr})\cap C_0(X)=\{0\}$.
To show that \ref{enu:generalised_intersection_property3} implies \ref{enu:generalised_intersection_property1}  assume that $\varphi$ is not topologically free. Then 
there are $t\in G\setminus\{1\}$ and an open non-empty set $U\subseteq X$ such that $\varphi_t|_{U}=\text{id}_{U}$. Take any non-zero $a\in C_0(U)\subseteq C_0(X)$. 
Then  $b:=a-\delta_{t^{-1}}a\in F(\alpha)$ is non-zero, and so $\Lambda_P(b)\neq 0$,  but   \eqref{eq:trivial_rep} readily implies that $ \Lambda_P^{\tr}(b)=0$. 
Hence $\ker(\Lambda_P^{\tr})\not\subseteq\ker(\Lambda_P)$. Thus \ref{enu:generalised_intersection_property1}-\ref{enu:generalised_intersection_property3} are equivalent. These conditions are independent of $P$ because \ref{enu:generalised_intersection_property1} is. 
Condition \ref{enu:generalised_intersection_property4} is equivalent to \ref{enu:generalised_intersection_property2} for $P=T$,
because $F^T(\alpha)=F^T_{\red}(\alpha)$ by Corollary   \ref{cor:properties_of_F^P}. Hence  all conditions \ref{enu:generalised_intersection_property1}-\ref{enu:generalised_intersection_property4} are equivalent.
\end{proof}
\begin{cor}\label{cor:intersection_property_full_untwisted}
 For any $P\subseteq [1,\infty]$,  $C_0(X)$ detects ideals in $F^P(\alpha)$
if and only if the regular representation $\Lambda_P$ is injective and  $\varphi$ is topologically free.
 \end{cor}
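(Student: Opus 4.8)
The plan is to derive both implications from Theorem \ref{thm:generalised_intersection_property} together with Remark \ref{rem:reduced_is_necessary}, exploiting that by Definition \ref{def:visible} the statement that ``\(C_0(X)\) detects ideals in \(F^P(\alpha)\)'' means precisely that the hidden ideal for the inclusion \(C_0(X)\subseteq F^P(\alpha)\) is \(\{0\}\), equivalently that no non-zero ideal of \(F^P(\alpha)\) meets \(C_0(X)\) trivially.

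For the ``if'' direction, suppose \(\varphi\) is topologically free and \(\Lambda_P\) is injective. Since condition \ref{enu:generalised_intersection_property1} implies \ref{enu:generalised_intersection_property2} in Theorem \ref{thm:generalised_intersection_property}, the inclusion \(C_0(X)\subseteq F^P(\alpha)\) has the generalised intersection property with hidden ideal \(\ker\Lambda_P\). As \(\Lambda_P\) is injective, \(\ker\Lambda_P=\{0\}\), so the hidden ideal vanishes, which by Definition \ref{def:visible} is exactly the assertion that \(C_0(X)\) detects ideals. This direction is essentially immediate.

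For the ``only if'' direction, assume \(C_0(X)\) detects ideals in \(F^P(\alpha)\). I would first extract injectivity of \(\Lambda_P\): by Remark \ref{rem:reduced_is_necessary} detection of ideals forces \(F^P(\alpha)\) to be reduced, and by Remark \ref{rem:properties_of_F^P} the joint kernel \(\bigcap_{t\in G}\ker(E_{P,t})\) coincides with \(\ker\Lambda_P\); hence \(\ker\Lambda_P=\{0\}\). To obtain topological freeness I would go through condition \ref{enu:generalised_intersection_property3}. The \(C_0(X)\)-trivial representation \(\Lambda_P^{\tr}\) is a representation of \(F^P(\alpha)\) whose restriction to \(C_0(X)\) acts by multiplication operators and is therefore injective, so \(\ker(\Lambda_P^{\tr})\) is an ideal of \(F^P(\alpha)\) with \(\ker(\Lambda_P^{\tr})\cap C_0(X)=\{0\}\). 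Detection of ideals then forces \(\ker(\Lambda_P^{\tr})=\{0\}\), whence trivially \(\ker(\Lambda_P^{\tr})\subseteq\ker(\Lambda_P)\); this is condition \ref{enu:generalised_intersection_property3}, and Theorem \ref{thm:generalised_intersection_property} now returns condition \ref{enu:generalised_intersection_property1}, namely topological freeness of \(\varphi\).

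The one genuine subtlety, and the place where I expect to have to be careful, is the ``only if'' direction: detection of ideals only tells us that the hidden ideal is \(\{0\}\), and does not by itself identify this hidden ideal with \(\ker\Lambda_P\). One must therefore secure injectivity of \(\Lambda_P\) independently, via the necessity of being reduced in Remark \ref{rem:reduced_is_necessary}, and separately feed the faithfulness of \(\Lambda_P^{\tr}\) on \(C_0(X)\) into condition \ref{enu:generalised_intersection_property3} to recover topological freeness, rather than attempting to read either fact directly off the bare statement that the hidden ideal vanishes. Everything else is routine bookkeeping with Definition \ref{def:visible}.
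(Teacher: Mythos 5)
Your proposal is correct and follows essentially the same route as the paper: both directions reduce to Theorem \ref{thm:generalised_intersection_property} together with the observation that the relevant kernels intersect $C_0(X)$ trivially. The paper's proof is slightly more compact — it notes directly that $\ker(\Lambda_P)\cap C_0(X)=\{0\}$, so detection of ideals is equivalent to $\ker(\Lambda_P)=\{0\}$ combined with condition (2) of the theorem — whereas you obtain injectivity via Remark \ref{rem:reduced_is_necessary} and recover topological freeness through condition (3) via $\ker(\Lambda_P^{\tr})\cap C_0(X)=\{0\}$; this is a cosmetic permutation within the same theorem's equivalences, not a different method.
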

\begin{proof}
Since $\ker(\Lambda_P)\cap C_0(X)=\{0\}$, we see that $C_0(X)$ detects ideals in $F^P(\alpha)$ if and only if $\ker(\Lambda_P)=\{0\}$ and condition \ref{enu:generalised_intersection_property2} in Theorem \ref{thm:generalised_intersection_property} holds. 
\end{proof}

By Corollary \ref{cor:general_generalised_intersection}, topological freeness implies 
that $C_0(X)$ detects ideals in every reduced algebra $F^p_{\red}(\alpha)$, $p\in (1,\infty)$, but in general (when the action is not amenable)
 the converse is not true. 
For instance,   the main result of \cite{PH} implies that if $G$ is a group with the Powers property (so for instance, a free group $G=\F_n$ with $n>1$ generators), then for any minimal action $\varphi$    
on a compact  space $X$  the crossed products $F^p_{\red}(\alpha)$, $p\in (1,\infty)$, are simple, and so $C_0(X)$ detects ideals in $F^p_{\red}(\alpha)$ but the action might be  trivial. This in particular applies to actions on $X=\{x\}$. Then the resulting crossed products are \emph{group Banach algebras} that we denote by $F(G,u)$,
$F_{\red}^p(G,u)$ and so on. For  group  algebras Phillips  extended in \cite{Phillips19} the result of \cite{PH}  to all $C^*$-simple groups.  So for `large groups'
topological freeness is not necessary for simplicity of $F^p_{\red}(\alpha)$.
Adding a twist to the picture makes things even more interesting, as then  topological freeness is not necessary even  for  simplicity of algebras  of   finite abelian groups:
\begin{ex}
Any matrix algebra $M_{n}(\C)$ is a twisted group algebra where  $G:=\Z_n\times\Z_n$ and 
$u( (p ,r), (s,t)):=e^{-2\pi i \frac{rs}{n}}$, $p,r,s,t\in \Z_n$. The identification (that disregards norms)  goes
via the map $F(G,u)\ni a\mapsto \overline{a}\in M_{n}(\C) $ where $(\overline{a}x)(r):=\sum_{s,t\in \Z_n} a(s,t)e^{2\pi i\frac{rs}{n}} x(r-t)$ for $x=(x(1),...,x(n))\in \C^n$ and $r=1,...,n$.
Note that functions  supported on the subgroup  $H:=\Z_n\times \{0\}$ 
form a  subalgebra  of diagonal matrices in $F(G,u)=M_{n}(\C)$ that can be identified with $F(H)$.
In particular, $\C\cdot 1$ detects ideals in $F(G,u)$, because $F(G,u)$ is simple, but $\C\cdot 1$ does  not detect ideals in  $F(H) \subseteq F(G,u)$, unless $n=1$.
 \end{ex}
We now show that the only reason why topological freeness fails in the above examples is that  it  passes to subgroups while  detection of ideals does not pass to subalgebras (not even those coming from subgroups). Namely, let  $H\subseteq G$ be a subgroup of $G$. Topological freeness of $\varphi:G\to \Homeo(X)$ implies topological freeness of the restricted action $\varphi|_{H}:H\to \Homeo(X)$.
Also any twisted action $(\alpha,u)$ of $G$ restricts to a twisted action $(\alpha|_H,u|_H)$ of $H$ (formally we should write $u|_{H\times H}$).
Clearly, $F(\alpha|_H,u|_H)\subseteq F(\alpha,u)$. If $\RR$ is a class of representations of $F(\alpha,u)$, we denote by $\RR|_H$ their restrictions to $F(\alpha|_H,u|_H)$.
Then  $F_{\RR|_{H}}(\alpha|_{H},u|_{H})\subseteq  F_{\RR}(\alpha,u)$ is a Banach subalgebra, and $F_{\RR|_{H}}(\alpha|_{H},u|_{H})$ admits the FD or  is reduced  if $F_{\RR}(\alpha,u)$
has this property. This applies to full  crossed products in Definition \ref{def:L^P_crossed_products} giving $F^P(\alpha|_{H},u|_{H})\subseteq  F^P(\alpha,u)$
for any $P\subseteq [1,\infty]$. For reduced crossed products this is not clear.
We certainly have $F^P_{\red}(\alpha|_{H},u|_{H})\subseteq  F^P_{\red}(\alpha,u)$ when  $ P\subseteq \{1,2,\infty\}$ (as for $p=1,\infty$ the  reduced and full crossed products coincide, and for $p=2$ the inclusion is 
 a well known $C^*$-algebraic fact).
We also have $F^P_{\red}(\alpha|_{H},u|_{H})\subseteq  F^P_{\red}(\alpha,u)$  whenever $F^P_{\red}(\alpha|_{H},u|_{H})=F^P(\alpha|_{H},u|_{H})$, which  holds for instance  when the restricted action $\varphi|_{H}$
is amenable.

The next result  is inspired by the corresponding result in \cite{Kwa-Meyer2} proved for  $C^*$-algebras associated to Fell bundles over inverse semigroups, cf. \cite[Proposition 6.4]{Kwa-Meyer2}.
\begin{thm}\label{thm:twisted_generalised_intersection_property}
Let $(\alpha,u)$ be a twisted action of a discrete group $G$ on $C_0(X)$. For any  $P\subseteq [1,\infty]$,  the following conditions are equivalent:
\begin{enumerate}
\item\label{enu:twisted_generalised_intersection_property1} The dual action $\varphi:G\to \Homeo(X)$ is topologically free.
\item\label{enu:twisted_generalised_intersection_property2} For any subgroup $H\subseteq G$,  $C_0(X)\subseteq F^P (\alpha|_{H},u|_{H})$ has the generalised intersection property with
hidden ideal being the kernel of  $\Lambda_P:F^P (\alpha|_{H},u|_{H})\to F_{\red}^P (\alpha|_{H},u|_{H})$.
\item\label{enu:twisted_generalised_intersection_property3}  $C_0(X)$ detects ideals in all $F_{\red}^P (\alpha|_{H},u|_{H})$ for any subgroup $H\subseteq G$.
\item\label{enu:twisted_generalised_intersection_property4}  $C_0(X)$ detects ideals in all $F_{\red}^P (\alpha|_{H},u|_{H})$ for any cyclic subgroup $H\subseteq G$.
\end{enumerate}
\end{thm}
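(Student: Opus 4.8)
The plan is to run the cycle $\ref{enu:twisted_generalised_intersection_property1}\Rightarrow\ref{enu:twisted_generalised_intersection_property2}\Rightarrow\ref{enu:twisted_generalised_intersection_property3}\Rightarrow\ref{enu:twisted_generalised_intersection_property4}\Rightarrow\ref{enu:twisted_generalised_intersection_property1}$, where only the last arrow carries real content. For $\ref{enu:twisted_generalised_intersection_property1}\Rightarrow\ref{enu:twisted_generalised_intersection_property2}$, topological freeness of $\varphi$ restricts to topological freeness of each $\varphi|_H$ (fixed-point sets for $t\in H\setminus\{1\}\subseteq G\setminus\{1\}$ still have empty interior), so Corollary~\ref{cor:general_generalised_intersection} applies to $F^P(\alpha|_H,u|_H)$ and yields the generalised intersection property with hidden ideal $\bigcap_{t\in H}\ker(E_{P,t})$, which by Remark~\ref{rem:properties_of_F^P} equals $\ker\Lambda_P$. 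The step $\ref{enu:twisted_generalised_intersection_property2}\Rightarrow\ref{enu:twisted_generalised_intersection_property3}$ is the general fact that the quotient by a hidden ideal $\Null$ always acquires the intersection property: any ideal of $B/\Null$ is $K/\Null$ for an ideal $K\supsetneq\Null$ of $B$, maximality of $\Null$ forces $K\cap C_0(X)\neq\{0\}$, and since $C_0(X)\cap\ker\Lambda_P=\{0\}$ this nonzero element survives in $F^P(\alpha|_H,u|_H)/\ker\Lambda_P=F^P_{\red}(\alpha|_H,u|_H)$. Finally $\ref{enu:twisted_generalised_intersection_property3}\Rightarrow\ref{enu:twisted_generalised_intersection_property4}$ is immediate, cyclic subgroups being a special case.

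The crux is $\ref{enu:twisted_generalised_intersection_property4}\Rightarrow\ref{enu:twisted_generalised_intersection_property1}$, which I would prove by contraposition. Assume $\varphi$ is not topologically free and fix $t\in G\setminus\{1\}$ together with a nonempty open $U\subseteq X$ on which $\varphi_t=\mathrm{id}$. Then $H:=\langle t\rangle$ is cyclic, $U$ is $H$-invariant, and $H$ acts \emph{trivially} on the invariant ideal $C_0(U)\subseteq C_0(X)$. The goal is to produce a nonzero ideal $J$ in $B:=F^P_{\red}(\alpha|_H,u|_H)$ with $J\cap C_0(X)=\{0\}$, so that, by Definition~\ref{def:visible}, $C_0(X)$ fails to detect ideals in $B$ for the cyclic group $H$, contradicting \ref{enu:twisted_generalised_intersection_property4}. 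I would first localise: the invariant open set $U$ determines a two-sided ideal $I_U:=\overline{C_0(U)\,B}=\overline{B\,C_0(U)}$ of $B$, and since every point of $U$ is $H$-fixed, the orbit decomposition $\Lambda_p\cong\bigoplus_{x}\Ind_p(\delta_x)$ from the proof of Proposition~\ref{prop:reduced_norm_is_induced} (elements supported over $U$ are annihilated by $\Ind_p(\delta_y)$ for $y\notin U$) identifies $I_U$ with the reduced crossed product of the \emph{trivial} $H$-action on $C_0(U)$ twisted by $u|_U$.

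The structural heart is the analysis of $I_U$. For each $x\in U$ evaluation gives a scalar cocycle $u_x(s,r):=u(s,r)(x)\in Z^2(H,\T)$, and because $H$ is cyclic one has $H^2(H,\T)=\{0\}$ — for $H=\Z$ this is the freeness of $\Z=\F_1$ underlying Corollary~\ref{cor:untwist_the_twisted}, and for finite $H$ it is a direct computation with the norm map. Hence every $u_x$ is a coboundary; since coboundaries for a trivial action are symmetric in $(s,r)$, each $u_x$ is symmetric, and therefore $I_U$ is \emph{commutative}. Thus, using the description of a trivial-action crossed product as the section algebra of a bundle of twisted group algebras, $I_U$ is a commutative algebra whose fibre at $x$ is $F^P_{\red}(H,u_x)\cong F^P_{\red}(H)$ (a copy of $\C^{|H|}$ when $t$ has finite order, and $F^P_{\red}(\Z)$ otherwise); its Gelfand spectrum is a locally trivial bundle $\pi\colon V\to U$ whose fibres have at least two points, with the diagonal copy of $C_0(U)$ corresponding to the fibrewise-constant functions $\pi^*C_0(U)$. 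Making this bundle description rigorous for an arbitrary parameter set $P\subseteq[1,\infty]$, and assembling the twisted fibres coherently, is precisely where the free group $\F_1$ and the vanishing of its second cohomology do the real work, and I expect this to be the main obstacle of the whole argument.

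It then remains to exhibit a nonzero ideal of $I_U$ meeting $\pi^*C_0(U)$ only in $\{0\}$. By local triviality of $\pi$ there is a nonempty open $W\subseteq V$ containing no entire fibre $\pi^{-1}(x)$ (over a small open piece of $U$, take a proper open part of the fibre direction; this uses that fibres have at least two points). A fibrewise-constant function supported in $W$ vanishes on the nonempty part of each fibre it touches that lies outside $W$, hence vanishes on that whole fibre, hence is $0$; so $J:=C_0(W)$ satisfies $J\cap\pi^*C_0(U)=\{0\}$. Since the group elements $v_r$, $r\in H$, act on $I_U$ as invertible multipliers and $C_0(X)$ acts by fibrewise-constant multiplication, and multipliers preserve closed ideals of an algebra with an approximate unit, the ideal $J$ of $I_U$ is preserved by all of $B$; thus $J$ is a nonzero ideal of $B$ with $J\cap C_0(X)=J\cap C_0(U)=\{0\}$. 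By Definition~\ref{def:visible} this contradicts \ref{enu:twisted_generalised_intersection_property4} and closes the cycle.
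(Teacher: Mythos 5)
Your skeleton matches the paper's on the easy arrows, and your localisation to the invariant set $U$ (Lemma~\ref{lem:restriction_of_action_to_set}) together with the bundle picture over a trivial action (Lemma~\ref{lem:trivial_action_bundle_description}) are exactly the paper's tools; the observation that $H^2(H,\T)=0$ for every cyclic $H$ forces each $u_x$ to be symmetric, hence $I_U$ commutative, is correct and does not appear in the paper. But the finish of \ref{enu:twisted_generalised_intersection_property4}$\Rightarrow$\ref{enu:twisted_generalised_intersection_property1} has a genuine hole, which you yourself flag as ``the main obstacle'': the Gelfand argument needs (a) that every character of the section algebra is an evaluation at some $x\in U$ composed with a character of the fibre, with $\pi\colon V\to U$ open, (b) local triviality of $V\to U$, and (c) --- fatally for $H\cong\Z$ --- that $C_0(W)$ is contained in $I_U$ at all: the fibres $F^P_{\red}(\Z,u_x)\cong F^P_{\red}(\Z)$ are in general \emph{proper} dense subalgebras of $C(\T)$ (e.g.\ the Wiener algebra for $p=1$), so ``$J:=C_0(W)$'' is not an ideal of $I_U$ as written, and producing nonzero elements of $I_U$ whose Gelfand transform is supported in $W$ requires a regularity argument (for a general commutative Banach algebra such elements need not exist --- think of the disc algebra). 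The paper sidesteps all of this with a case split: for $H\cong\Z$ it untwists \emph{globally} using $H^2(\Z,C_u(X))=0$ with coefficients in $C_u(X)$ (Corollary~\ref{cor:untwist_the_twisted}; your pointwise vanishing of $H^2(H,\T)$ is a weaker statement, adequate only for the symmetry claim) and then quotes the untwisted Theorem~\ref{thm:generalised_intersection_property}, whose proof runs through the $C_0(X)$-trivial representation rather than Gelfand theory; for finite $H$ it first replaces $t$ by a power so that $|H|$ is square-free, whence $\dim F^P_{\red}(H,u_{x_0})=|H|$ forces the fibre to be non-simple, and it manufactures the offending ideal from a nontrivial fibre ideal $I_{x_0}$ --- no spectrum bundle, no local triviality, no regularity.

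There is a second gap, in your step \ref{enu:twisted_generalised_intersection_property2}$\Rightarrow$\ref{enu:twisted_generalised_intersection_property3}: you write $F^P(\alpha|_H,u|_H)/\ker\Lambda_P=F^P_{\red}(\alpha|_H,u|_H)$, but for non-amenable $H$ the map induced by $\Lambda_P$ on the quotient is only a continuous injection with dense range into the reduced algebra; it is not known to be isometric or surjective, so ideals of $F^P_{\red}(\alpha|_H,u|_H)$ are not ideals of the quotient, and a nonzero closed ideal $J$ of $F^P_{\red}(\alpha|_H,u|_H)$ could a priori satisfy $J\cap\Lambda_P\bigl(F^P(\alpha|_H,u|_H)\bigr)=\{0\}$, stalling the pullback of $J$ to an ideal $K\supsetneq\ker\Lambda_P$. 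The paper never uses this arrow: it obtains \ref{enu:twisted_generalised_intersection_property3} directly from \ref{enu:twisted_generalised_intersection_property1} via Corollary~\ref{cor:general_generalised_intersection}, which applies verbatim to the reduced algebras (where the hidden ideal $\bigcap_{t}\ker E_t^{\RR}$ vanishes), and it deduces \ref{enu:twisted_generalised_intersection_property4} from \ref{enu:twisted_generalised_intersection_property2} using that cyclic groups are amenable, so full and reduced crossed products coincide there (Corollary~\ref{cor:properties_of_F^P}). Your cycle can be repaired along these lines, but as proposed both steps above are real gaps, not presentational ones.
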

\begin{cor}\label{cor:intersection_property_full_twisted}
Assume the action $\varphi$ is amenable and let $u\in Z^2(G, C_u(X))$ and $P\subseteq  [1,\infty]$.
Then $\varphi$ is topologically free if and only if   $C_0(X)$ detects ideals in all intermediate subalgebras 
$F^P (\alpha|_{H},u|_{H})\subseteq F^P (\alpha,u)$, where  $H\subseteq G$ is a subgroup. 
 \end{cor}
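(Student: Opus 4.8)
The plan is to read this off the equivalence of conditions \ref{enu:twisted_generalised_intersection_property1} and \ref{enu:twisted_generalised_intersection_property3} in Theorem \ref{thm:twisted_generalised_intersection_property}, upgrading the reduced crossed products appearing there to full ones by means of the amenability hypothesis. Indeed, each intermediate subalgebra $F^P(\alpha|_H,u|_H)\subseteq F^P(\alpha,u)$ is canonically identified with the full crossed product of the restricted twisted action (as recorded just before the theorem), and by Definition \ref{def:visible} the statement ``$C_0(X)$ detects ideals in $F^P(\alpha|_H,u|_H)$'' means exactly that the inclusion $C_0(X)\subseteq F^P(\alpha|_H,u|_H)$ has the intersection property. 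Consequently the entire corollary reduces to the single equality
\[
F^P(\alpha|_H,u|_H)=F^P_{\red}(\alpha|_H,u|_H)\qquad\text{for every subgroup }H\subseteq G,
\]
since once this holds, condition \ref{enu:twisted_generalised_intersection_property3} of Theorem \ref{thm:twisted_generalised_intersection_property} becomes verbatim the statement of the corollary, and its equivalence with topological freeness of $\varphi$ (condition \ref{enu:twisted_generalised_intersection_property1}) is precisely what we want.

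By Corollary \ref{cor:properties_of_F^P} the displayed equality holds as soon as the restricted action $\varphi|_H$ is amenable, so the real content is the stability of amenability under restriction to subgroups. To prove this I would fix $H\subseteq G$ and a transversal $\{t_j\}$ for the right cosets, writing $G=\bigsqcup_j Ht_j$; the key point is that left multiplication by $s\in H$ preserves each coset $Ht_j$ bijectively, because $sH=H$. Given an approximate invariant mean $(f_i)$ for $\varphi$, I would push it forward to $H$ by setting
\[
g_i(h):=\sum_j f_i(ht_j),\qquad h\in H,
\]
which is again finitely supported with values in $C_c(X)^+$ since $f_i$ has finite support. Regrouping the sum over $G$ by cosets gives $\sum_{h\in H}g_i(h)=\sum_{t\in G}f_i(t)$, so the first net converges compactly to $1$; and using $\alpha_s(g_i(h))-g_i(sh)=\sum_j\bigl(\alpha_s(f_i(ht_j))-f_i((sh)t_j)\bigr)$ together with the triangle inequality and the fact that $ht_j$ runs over all of $G$, one obtains for every $s\in H$
\[
\sum_{h\in H}\bigl|\alpha_s(g_i(h))-g_i(sh)\bigr|\leq \sum_{t\in G}\bigl|\alpha_s(f_i(t))-f_i(st)\bigr|,
\]
whose right-hand side converges compactly to $0$. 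Hence $(g_i)$ is an approximate invariant mean for $\varphi|_H$, so $\varphi|_H$ is amenable.

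Combining the two steps, Corollary \ref{cor:properties_of_F^P} applied to the amenable action $\varphi|_H$ yields $F^P(\alpha|_H,u|_H)=F^P_{\red}(\alpha|_H,u|_H)$ for every subgroup $H$ and every $P\subseteq[1,\infty]$, and the corollary then follows at once from Theorem \ref{thm:twisted_generalised_intersection_property}. The only non-formal ingredient is the subgroup-stability of amenability above; everything else is bookkeeping matching the definitions of the two formulations. I expect the coset-averaging estimate to be the sole genuine obstacle, and it is a mild one: the domination bounds survive the pushforward precisely because the right cosets $Ht_j$ are exactly the orbits of the left $H$-action on $G$ that the invariance condition is tested against.
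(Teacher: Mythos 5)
Your proposal is correct and follows essentially the same route as the paper: both reduce the corollary to the equivalence \ref{enu:twisted_generalised_intersection_property1}$\Leftrightarrow$\ref{enu:twisted_generalised_intersection_property3} of Theorem \ref{thm:twisted_generalised_intersection_property} by noting that amenability of $\varphi$ passes to every restricted action $\varphi|_H$, whence $F^P(\alpha|_H,u|_H)=F^P_{\red}(\alpha|_H,u|_H)$ by Corollary \ref{cor:properties_of_F^P} and these algebras sit inside $F^P(\alpha,u)$ as the intermediate subalgebras in question. The only difference is that the paper asserts the subgroup-stability of amenability without proof, whereas you verify it explicitly via the coset pushforward $g_i(h)=\sum_j f_i(ht_j)$, and that verification is correct.
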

\begin{proof}
Amenability of $\varphi:G\to \Homeo(X)$ passes  to the restricted action $\varphi:H\to \Homeo(X)$.
Hence $F^P (\alpha|_{H},u|_{H})=F_{\red}^P (\alpha|_{H},u|_{H})$ by Corollary \ref{cor:properties_of_F^P}.
In particular, $F^P (\alpha|_{H},u|_{H})$ embeds into $F^P (\alpha,u)$.
\end{proof}

For the proof of Theorem \ref{thm:twisted_generalised_intersection_property} we need some preparations. 
A set  $U\subseteq X$  is \emph{$\varphi$-invariant} if  $\varphi_{t}(U)=U$ for all $t\in G$.  For any such set the twisted action $(\alpha,u)$ of $G$ restricts to a twisted action $(\alpha^U,u^U)$ of $G$ where  
the action  dual  to $\alpha^U$ is given by homeomorphisms  $\varphi^{U}_t:=\varphi_{t}|_{U}$, $t\in G$, and  we put $u^U(s,t):=u(s,t)|_{U}$ for $s,t\in G$. 
Assume $U$ is $\varphi$-invariant and open. 
Then $F(\alpha^U,u^U)=\ell^1(G,C_0(U))\subseteq  \ell^1(G,C_0(X))=F(\alpha,u)$ is naturally  a $^*$-Banach subalgebra, and in fact an ideal in $F(\alpha,u)$. If $\RR$ is a class of representations of $F(\alpha,u)$, we denote by $\RR^U$ their restrictions to $F(\alpha^U,u^U)$.
Then again, $F_{\RR^U}(\alpha^U,u^U)\subseteq  F_{\RR}(\alpha,u)$ is an ideal, and $F(\alpha^U,u^U)$ admits the FD or  is reduced  if $F_{\RR}(\alpha,u)$
has this property. Modulo compression of the space, this applies to reduced $L^p$-operator algebra crossed products giving $F^P_{\red}(\alpha^U,u^U)\subseteq  F^P_{\red}(\alpha,u)$
for any $P\subseteq [1,\infty]$. For full crossed products this is not clear.
We  have $F^P(\alpha^U,u^U)\subseteq  F^P(\alpha,u)$ for $P\subseteq \{1,2,\infty\}$, as for $p=1,\infty$ the reduced and full crossed products coincide and for $p=2$ it is
 a  known $C^*$-algebraic fact, see \cite[Proposition 21.15]{Exel_book}.
For other $P$ we  certainly have this whenever $F^P(\alpha^U,u^U)=F^P_{\red}(\alpha^U,u^U)$ which holds for instance when the restricted action $\varphi^{U}:G\to \Homeo(U)$
is amenable.

\begin{lem}\label{lem:restriction_of_action_to_set}
Assume $F_{\RR}(\alpha,u)$ is a Banach algebra crossed product. 
An ideal in $F_{\RR}(\alpha,u)$ is generated by its intersection with $C_0(X)$ if and only if it is of the form $F_{\RR^U}(\alpha^U,u^U)$ 
for some open $\varphi$-invariant $U\subseteq X$, and then $F_{\RR^U}(\alpha^U,u^U)\cap C_0(X)=C_0(U)$. If $C_0(X)$ detects ideals in $F_{\RR}(\alpha,u)$, then $C_0(U)$
detects ideals in $F_{\RR^U}(\alpha^U,u^U)$ for any open $\varphi$-invariant $U\subseteq X$.
\end{lem}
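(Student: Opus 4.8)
The plan is to prove the three assertions in turn; the linchpin throughout is that every closed ideal of $F_{\RR}(\alpha,u)$ is invariant under the canonical multipliers $v_t$. Disintegrating the action of $F_{\RR}(\alpha,u)$ on itself as in Lemma \ref{lem:integration_disintegration} produces invertible isometries $v_t$ with $v_t a v_t^{-1}=\alpha_t(a)$ for $a\in C_0(X)$, while the contractive approximate unit $(\mu_i)$ of $C_0(X)$ is an approximate unit for $F_{\RR}(\alpha,u)$ and both $v_t\mu_i=\alpha_t(\mu_i)\delta_t$ and $\mu_i v_t=\mu_i\delta_t$ lie in $F(\alpha,u)$. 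Hence for a closed ideal $J$ and $x\in J$ one gets $v_t x=\lim_i (v_t\mu_i)x\in J$ and $x v_t=\lim_i x(\mu_i v_t)\in J$, so $v_t J\subseteq J$ and $J v_t\subseteq J$ for all $t\in G$. I would record this first, since both the equivalence and the $\varphi$-invariance of $U$ rest on it.

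Next I would establish the intersection identity $F_{\RR^U}(\alpha^U,u^U)\cap C_0(X)=C_0(U)$. The inclusion $\supseteq$ is immediate. For $\subseteq$, the Fourier coefficient $E_1^{\RR}$ is contractive and, by evaluation at $1\in G$, carries $F(\alpha^U,u^U)=\ell^1(G,C_0(U))$ into the closed subspace $C_0(U)$; by density it then maps all of $F_{\RR^U}(\alpha^U,u^U)$ into $C_0(U)$. Since $E_1^{\RR}$ is the identity on $C_0(X)$, any element of the intersection equals its own image under $E_1^{\RR}$ and so lies in $C_0(U)$.

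The heart of the equivalence is the identification, for open $\varphi$-invariant $U$, of the closed ideal of $F_{\RR}(\alpha,u)$ generated by $C_0(U)$ with $F_{\RR^U}(\alpha^U,u^U)$. One inclusion is clear because $F_{\RR^U}(\alpha^U,u^U)$ is a closed ideal containing $C_0(U)$. For the other, $v_t$-invariance puts $C_0(U)\delta_t=C_0(U)v_t$ into the generated ideal for each $t$, and passing to the closed span recovers $F_{\RR^U}(\alpha^U,u^U)$. Granting this, the two implications follow: if $J=F_{\RR^U}(\alpha^U,u^U)$ then $J\cap C_0(X)=C_0(U)$ by the previous step and this generates $J$; conversely, if $J$ is generated by $J\cap C_0(X)$, write $J\cap C_0(X)=C_0(U)$ for an open $U$, observe that $\alpha_t(a)=v_t a v_t^{-1}\in J\cap C_0(X)=C_0(U)$ for $a\in C_0(U)$ forces $\varphi_t(U)=U$, and then conclude $J=F_{\RR^U}(\alpha^U,u^U)$ from the generated-ideal identity.

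For the last assertion, the decisive point is that a closed ideal $K$ of $I:=F_{\RR^U}(\alpha^U,u^U)$ is automatically a closed ideal of $B:=F_{\RR}(\alpha,u)$. Indeed $C_0(U)$ furnishes a contractive approximate unit $(\nu_j)$ for $I$, so for $b\in B$ and $k\in K$ one has $bk=\lim_j(b\nu_j)k$ with $b\nu_j\in I$ (since $I$ is an ideal of $B$) and $(b\nu_j)k\in K$ (since $K$ is an ideal of $I$), and symmetrically for $kb$. Therefore, assuming $C_0(X)$ detects ideals in $B$, any nonzero closed ideal $K$ of $I$ is a nonzero closed ideal of $B$, whence $K\cap C_0(X)\neq\{0\}$; as $K\subseteq I$, the intersection identity gives $K\cap C_0(U)=K\cap C_0(X)\neq\{0\}$, so $C_0(U)$ detects ideals in $I$. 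I expect the main obstacle to be precisely these approximate-unit manipulations with the multipliers $v_t$ --- the $v_t$-invariance of closed ideals and, above all, the fact that an ideal of the ideal $I$ is an ideal of the whole algebra; the remaining steps are continuity of the Fourier maps together with the standard bijection between open sets and closed ideals of $C_0(X)$.
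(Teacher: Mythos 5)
Your proof is correct and follows essentially the same route as the paper's: the covariance relations via the multipliers $v_t$ from Lemma \ref{lem:integration_disintegration} to get $\varphi$-invariance of $U$ and the identification of the generated ideal with $F_{\RR^U}(\alpha^U,u^U)$, the contractive Fourier coefficient $E_1^{\RR}$ for the intersection identity, and the approximate-unit argument showing an ideal of $F_{\RR^U}(\alpha^U,u^U)$ is an ideal of $F_{\RR}(\alpha,u)$ for the detection statement. You merely make explicit two points the paper leaves implicit (the $v_t$-invariance of closed ideals and the two-sided approximate-unit computation), and you phrase the last part directly rather than by contradiction.
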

\begin{proof} 
If $J$ is an ideal in $F_{\RR}(\alpha,u)$, then $J\cap C_0(X)$ is an ideal in $C_0(U)$ and hence $J\cap C_0(X)=C_0(U)$ for an open set $U\subseteq X$.
Disintegrating the identity map on  $F_{\RR}(\alpha,u)$,  see Lemma \ref{lem:integration_disintegration}, and using  covariance relations, for any $a\in C_0(U)$ and $t\in G$ we
get  $\alpha_t(a)=v_t a v_{t}^{-1}\in J\cap C_0(X)=C_0(U)$. Hence $U$ is $\varphi$-invariant. 
Now let $U$ be any $\varphi$-invariant open set. The ideal generated by $C_0(U)$ in $F(\alpha,u)= \ell^1(G,C_0(X))$ is $F(\alpha^U,u^U)=\ell^1(G,C_0(U))$,
and hence the ideal generated by $C_0(U)$ in $F_{\RR}(\alpha,u)$ is $F_{\RR^U}(\alpha^U,u^U)$. 
Moreover, $F_{\RR^U}(\alpha^U,u^U)\cap C_0(X)=C_0(U)$.  Indeed, the  inclusion $C_0(U)\subseteq F_{\RR^U}(\alpha^U,u^U)\cap C_0(X)$ is obvious and for the opposite inclusion
note  that $E_1^{\RR}$ is the identity on $F_{\RR^U}(\alpha^U,u^U)\cap C_0(X)$,  and   $E_1^{\RR}(F_{\RR^U}(\alpha^U,u^U))=C_0(U)$ because 
$F_{\RR^U}(\alpha^U,u^U)$ is the closure of $F(\alpha^U,u^U)$ and $E_1(F(\alpha^U,u^U))=C_0(U)$.
This proves the first part of the assertion. For the second part, assume that  $C_0(U)$ does not detect ideals in $F_{\RR^U}(\alpha^U,u^U)$, so that there is a non-zero ideal $I$ in  $F_{\RR^U}(\alpha^U,u^U)$ with $I\cap C_0(U)=\{0\}$. 
Since $F_{\RR^U}(\alpha^U,u^U)$  has an approximate unit which is an approximate unit in $C_0(U)$, and $F_{\RR^U}(\alpha^U,u^U)$ is an ideal in 
$F_{\RR}(\alpha,u)$, we conclude that $I$ is an ideal in $F_{\RR}(\alpha,u)$ and $I\cap C_0(X)=I\cap C_0(U)=\{0\}$. Hence $C_0(X)$ does not detect ideals in $F_{\RR}(\alpha,u)$.
\end{proof}
\begin{lem}\label{lem:trivial_action_bundle_description}
Suppose that  $\varphi$  acts by identities, that is $\varphi_g=\text{id}_X$ for $g\in G$. 
For any non-empty $P\subseteq [1,\infty]$,  $F^P(\alpha,u)$ is isometrically isomorphic to the algebra of $C_0$-sections of 
a bundle $\B=\{ F_{\red}^P(G,u_x)\}_{x\in X}$ of twisted group reduced Banach algebras, where $u_x(s,t):=u(s,t)(x)$ and the isomorphism sends $b\mapsto \widehat{b}$ where $\widehat{b}(x)(t):=b(x,t)$, $x\in X$, $t\in G$.
\end{lem}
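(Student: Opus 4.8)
The plan is to realise the isomorphism through the family of point evaluations and to reduce everything to a single norm identity. Since $\varphi$ acts by identities, the convolution in $F(\alpha,u)$ is \emph{pointwise} in $x$: for $b,c\in F(\alpha,u)$ one has $(b*c)(r)(x)=\sum_{st=r}b(s)(x)c(t)(x)u_x(s,t)$, where $u_x(s,t):=u(s,t)(x)\in\mathbb{T}$. Hence for each fixed $x\in X$ the evaluation $b\mapsto\widehat b(x)$, $\widehat b(x)(t):=b(t)(x)$, is a homomorphism $F(\alpha,u)\to F(G,u_x)=\ell^1(G)$ intertwining the involutions ($u_x\in Z^2(G,\mathbb{T})$ being a classical circle-valued cocycle). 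Composing with the canonical map into the completion gives, for every $x$, a contractive homomorphism $F(\alpha,u)\to F^P_{\red}(G,u_x)$, and thus a candidate section $\widehat b=(\widehat b(x))_{x\in X}$. What then remains is (i) the norm identity $\|b\|=\sup_{x\in X}\|\widehat b(x)\|$, and (ii) that the sections $\widehat b$ fill out exactly the $C_0$-sections of an upper semicontinuous bundle.

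For the norm identity the inequality ``$\ge$'' is easy: given a (reduced) covariant representation $w$ of $(G,u_x)$ on a space $V$, letting $C_0(X)$ act on $V$ through the scalars $a\mapsto a(x)$ produces, by triviality of $\alpha$, a genuine covariant representation of $(\alpha,u)$ whose integrated form applied to $b$ is exactly $w$ applied to $\widehat b(x)$; taking the supremum over the relevant fibre representations bounds each fibre norm by the crossed product norm. The substantial inequality ``$\le$'' is where I expect the real work. I would fix a non-degenerate representation $(\pi,v)$ of $(\alpha,u)$ on an $L^p$-space $E=L^p(\Omega,\mu)$ and invoke the structure theorem of \cite{BKM1}: $\pi$ is multiplication through a measurable map $\Phi\colon\Omega\to X$. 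Disintegrating $\mu$ along $\Phi$ as $\mu=\int_X\mu_x\,d\nu(x)$ realises $E$ as an $L^p$ direct integral $\int_X^{\oplus}L^p(\mu_x)\,d\nu(x)$; since the covariance relation for the trivial action forces each $v_t$ to commute with $\pi(C_0(X))$, the isometries $v_t$ are decomposable, $v_t=\int_X^{\oplus}v_t(x)\,d\nu(x)$, and $(a(x)\,\mathrm{id},\,v_t(x))$ is, for $\nu$-a.e.\ $x$, a covariant representation of $(G,u_x)$ on $L^p(\mu_x)$. This yields $\|\pi\rtimes v(b)\|\le\sup_x\|\widehat b(x)\|$. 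The fibre completion produced this way matches the crossed product one has chosen (full $\leftrightarrow$ full, reduced $\leftrightarrow$ reduced); these coincide when $G$ is amenable, as for the cyclic subgroups appearing in Theorem \ref{thm:twisted_generalised_intersection_property}(\ref{enu:twisted_generalised_intersection_property4}). For the reduced side the decomposition is transparent and avoids \cite{BKM1}: by Lemma \ref{lem:regular_representation}, with $\varphi_{s^{-1}}=\mathrm{id}$, the regular representation splits as $\Lambda_p=\bigoplus_{x\in X}\lambda_{p,x}$, where $\lambda_{p,x}$ is precisely the reduced $\ell^p$-regular representation of $F(G,u_x)$ applied to $\widehat b(x)$, giving at once $\|b\|_{P,\red}=\sup_x\|\widehat b(x)\|_{F^P_{\red}(G,u_x)}$; Proposition \ref{prop:reduced_norm_is_induced} then feeds the estimate above.

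It remains to organise the fibres into a bundle and identify the image. I would take $\B=\{F^P_{\red}(G,u_x)\}_{x\in X}$ with the sections generated by $\{\widehat b:b\in F(\alpha,u)\}$. Upper semicontinuity of $x\mapsto\|\widehat b(x)\|$ follows by writing this norm as a supremum of continuous functions coming from the fibre representations, and each $\widehat b$ is a genuine $C_0$-section because every coefficient $x\mapsto b(t)(x)$ lies in $C_0(X)$ and $\|\widehat b(x)\|\le\sum_{t}|b(t)(x)|\in C_0(X)$, so it vanishes at infinity. Since $b\mapsto\widehat b$ is isometric, is $C_0(X)$-linear (the action of $a\in C_0(X)$ being fibrewise multiplication by $a(x)$), and the $\widehat b(x)$ generate each fibre algebra, a standard partition-of-unity/Stone--Weierstrass argument for $C_0$-section algebras over $X$ shows the image is dense; completeness of $F^P(\alpha,u)$ then gives surjectivity onto the full $C_0$-section algebra.

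The main obstacle is the decomposition step for $p\ne2$: one must establish that the isometries $v_t$, which commute only with the (generally non-maximal) abelian algebra $\pi(C_0(X))$, are genuinely decomposable along the fibres of $\Phi$ on an $L^p$-space, together with the measurability of the field $x\mapsto v_t(x)$ and the consequent fibre-norm identification. Once this $L^p$-disintegration is secured, the remaining bundle-theoretic points (upper semicontinuity, vanishing at infinity, density of the generated sections) are routine.
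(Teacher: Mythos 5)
Your reduced-side argument is essentially the paper's proof, carried out more carefully. What the paper calls ``a simple calculation'' is exactly your block-diagonal decomposition $\Lambda_p=\bigoplus_{x\in X}\lambda_{p,x}$ of the regular representation into the twisted $\ell^p$-regular representations of the fibre algebras, yielding $\|b\|_{P,\red}=\sup_{x\in X}\|\widehat b(x)\|_{F^P_{\red}(G,u_x)}$. In fact your form of the identity is the correct one: the explicit value $\max_{x}\bigl(\sum_{t\in G}|b(t)(x)|^{p}\bigr)^{1/p}$ displayed in the paper is the $\ell^p$-norm of the coefficient sequence, which for $p\in(1,\infty)$ is in general strictly smaller than the reduced operator norm (already for $G=\Z_2$ and $b=\delta_1+\delta_g$ the operator norm is $2$, not $2^{1/p}$; the pointwise value is not even submultiplicative), but only the block-diagonal identity is actually used. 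The assembly of the fibres differs from the paper only cosmetically: the paper invokes Fell's reconstruction theorem \cite[II.13.18]{FD}, whereas you use fibrewise density plus a $C_0(X)$-module/Stone--Weierstrass argument. One slip on your side here: a supremum of continuous functions is \emph{lower}, not upper, semicontinuous, so your justification of upper semicontinuity of $x\mapsto\|\widehat b(x)\|$ is backwards; for the finite groups to which the lemma is applied the fibre norms are plainly continuous, which is what Fell's theorem needs.

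The substantive divergence is your treatment of the full norm, and there your ``main obstacle'' exposes a real issue rather than a step you failed to reproduce. The paper never disintegrates arbitrary covariant $L^p$-representations: its proof operates with reduced norms throughout and tacitly identifies $\|\cdot\|_{P}$ with $\sup_{p\in P}\|\cdot\|_{L^p,\red}$. Your proposed $\Phi$-fibred disintegration, if completed, would prove $\|b\|_{P}=\sup_x\|\widehat b(x)\|$ with the \emph{full} fibre norms, and, as you half-observe yourself, the lemma as literally stated (full crossed product, reduced fibres) then forces fibrewise full $=$ reduced: taking $X$ a single point it would assert $F^2(G,u)\cong F^2_{\red}(G,u)$, which fails for $G=\F_2$. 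So the unresolved decomposability of the $v_t$ along the fibres of $\Phi$ on an $L^p$-space is not a missing ingredient of the paper's route --- it is an attempt to prove more than the paper does, and it cannot succeed in the stated generality. Everything is consistent where the lemma is actually applied, in the proof of Theorem \ref{thm:twisted_generalised_intersection_property}: there $H$ is finite cyclic, hence amenable, so full and reduced crossed products coincide by Theorem \ref{thm:ambenability_reduced_equals_full} and Remark \ref{rem:consequences_of_the_groupoid_paper} (cf.\ Corollary \ref{cor:properties_of_F^P}). The cleanest repair, implicit in your own second paragraph, is to prove the reduced statement $F^P_{\red}(\alpha,u)\cong C_0(\B)$ outright --- your block-diagonal argument plus the bundle assembly does this completely --- and to pass to $F^P(\alpha,u)$ only under amenability, dropping the $L^p$-disintegration altogether.
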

\begin{proof}
 Since $\alpha_{t}=\text{id}$ for all $t\in G$,  each $u_x \in Z^2(G,\mathbb{T})$, $x\in X$, is a 2-cocycle for $G$.  
Since $\varphi$  is trivial we have $\|\cdot\|_{L^1,\red}=\|\cdot\|_{L^{\infty},\red}$ by \eqref{eq:norms_for_1_and_infty}.
Hence we may assume that $P\subseteq [1,\infty)$.
A simple calculation shows that for any $b\in F(\alpha,u)$ and $p\in [1,\infty)$ we have
$$
\|b\|_{L^p,\red}=\|\Lambda_{p}(b)\|=\max_{x\in X}\left( \sum_{t\in G}|b(t)(x)|^{p}\right)^{\frac{1}{p}}.
$$
This calculation also shows that $\|\widehat{b}(x)\|_{F_{\red}^p(G,u_x)}=\left( \sum_{t\in G}|b(t)(x)|^{p}\right)^{\frac{1}{p}}$, for each $x\in X$. This implies that the map
$b\mapsto \widehat{b}$ is a well defined isometry from $F(\alpha,u)\subseteq F^P(\alpha,u)$ into the space of bounded sections of  $\B=\{ F_{\red}^P(G,u_x)\}_{x\in X}$ equipped with the supremum norm, as we have
$$
\|b\|=\sup_{s\in P}\max_{x\in X}\left( \sum_{t\in G}|b(t)(x)|^{p}\right)^{\frac{1}{p}}= \sup_{x\in X}\sup_{s\in P} \left( \sum_{t\in G}|b(t)(x)|^{p}\right)^{\frac{1}{p}}=\|\widehat{b}\|.
$$
Clearly, $b\mapsto \widehat{b}$  is an algebra homomorphism.
By Fell's reconstruction theorem \cite[II.13.18]{FD} there is a unique topology on $\B$ making sections $\widehat{b}$, $b\in F(\alpha,u)$, continuous.
Since elements of $C_c(G,C_c(X))\subseteq F(\alpha,u)$ are mapped into compactly supported sections of $\B$, and any compactly supported section can be approximated by
such elements, we conclude that $b\mapsto \widehat{b}$ extends to the isometric isomorphism $F^P(\alpha,u)\cong C_0(\B)$.
\end{proof}
\begin{proof}[Proof of Theorem \ref{thm:twisted_generalised_intersection_property}]
By Corollary \ref{cor:general_generalised_intersection}, and as topological freeness passes to subgroups, \ref{enu:twisted_generalised_intersection_property1} implies each of the conditions 
\ref{enu:twisted_generalised_intersection_property2}-\ref{enu:twisted_generalised_intersection_property4}, see also Corollary \ref{cor:Fourier_maps_on_full}. Implication 
\ref{enu:twisted_generalised_intersection_property3}$\Rightarrow$\ref{enu:twisted_generalised_intersection_property4}  is trivial, and  \ref{enu:twisted_generalised_intersection_property2} implies \ref{enu:twisted_generalised_intersection_property4} because cyclic groups are amenable and 
then the full and reduced crossed products coincide. 
Thus it suffices to prove 
\ref{enu:twisted_generalised_intersection_property4}$\Rightarrow$\ref{enu:twisted_generalised_intersection_property1}.
To this end,  assume $\varphi$ is not topologically free, so that
there are $t\in G\setminus\{1\}$ and an open non-empty set $U\subseteq X$ such that $\varphi_t|_{U}=\text{id}_{U}$. 
Let $H$ be the cyclic group generated by $t$. Note that $F_{\red}^P (\alpha|_{H},u|_{H})=F^P(\alpha|_{H},u|_{H})$  because $H$ is amenable. If $H\cong \Z$, 
then $F(\alpha|_{H},u|_{H})\cong F(\alpha|_{H})$, and therefore also $F^P(\alpha|_{H},u|_{H})\cong F^P(\alpha|_{H})$, by  Corollary \ref{cor:untwist_the_twisted}.
Thus in this case $C_0(X)$ does not detect ideals in   $F_{\red}^P (\alpha|_{H},u|_{H})=F^P(\alpha|_{H},u|_{H})$   by Theorem \ref{thm:generalised_intersection_property}.
Hence we may assume that $|H|<\infty$. By passing to a power of $t$ we may assume that $|H|$ is not a square number (or even that it is a prime number).
As  we want to show that  $C_0(X)$ does not detect ideals  in  $F_{\red}^P (\alpha|_{H},u|_{H})$, 
by Lemma \ref{lem:restriction_of_action_to_set} we may assume that $U=X$. Assume also that $P$ is non-empty. Then by Lemma \ref{lem:trivial_action_bundle_description}
we may identify $F_{\red}^P (\alpha|_{H},u|_{H})$ with $C_0\left(\{ F_{\red}^P(H,u_x)\}_{x\in X}\right)$. Pick any $x_0\in X$.
Since $\dim(F_{\red}^P(H,u_{x_0}))=|H|$ is square free, $F_{\red}^P(H,u_{x_0})$ is not simple. Pick any non-trivial ideal $I_{x_0}$ in $F_{\red}^P(H,u_{x_0})$. Then 
$$
I:=\{b\in  C_0\left(\{ F_{\red}^P(H,u_x)\}_{x\in X}\right): b(x_0)\in I_{x_0} \}
$$
is a non-trivial ideal in  $C_0\left(\{ F_{\red}^P(H,u_x)\}_{x\in X}\right)$. It  has zero intersection with $C_0(X)$, which we identify with 
sections $a\in C_0\left(\{ F_{\red}^P(H,u_x)\}_{x\in X}\right)$ such that $a(x)\in \C \cdot 1_{x}$ where $1_{x}$ is the identity in $F_{\red}^P(H,u_{x_0})$.
Hence $C_0(X)$ does not detect ideals in $F_{\red}^P (\alpha|_{H},u|_{H})$. This also gives the claim about $F^{\emptyset}(\alpha|_{H},u|_{H})=F(\alpha|_{H},u|_{H})$  by 
the last part of Remark \ref{rem:Banach_algebra_with_Fourier_decomposition} because $H$ is finite.
\end{proof}

\section{Simplicity and ideal structure}
\label{sec:consequences}
The previous results give immediately the following simplicity criteria. Recall that $\varphi$ is an action of a discrete group $G$ on a locally compact Hausdorff space $X$, and $\alpha$ is the correspodning action on the commutative $C^*$-algebra $C_0(X)$.
\begin{defn}
We say $\varphi$ is \emph{minimal} if there are no non-trivial open  $\varphi$-invariant sets in $X$.
In general, we denote by $\I^\varphi(X)$ the \emph{lattice of all open  $\varphi$-invariant sets} in $X$.
\end{defn}
\begin{lem}\label{lem:simplicity_vs_minimality} Let $F_{\RR}(\alpha,u)$ be a twisted  crossed product admitting the Fourier decomposition
$\{E_t^{\RR}\}_{t\in G}$. 
All  non-trivial ideals in $F_{\RR}(\alpha,u)$ are contained in  $\bigcap_{t\in G}\ker(E_t^{\RR})$ if and only if $\varphi$ is minimal and $C_0(X)\subseteq F_{\RR}(\alpha,u)$ has the generalised intersection property with $\bigcap_{t\in G}\ker(E_t^{\RR})$ as the hidden ideal.
In particular, $F_{\RR}(\alpha,u)$ is simple if and only if  it is reduced, $C_0(X)$ detects ideals in $F_{\RR}(\alpha,u)$ and $\varphi$ is minimal.
\end{lem}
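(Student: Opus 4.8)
The plan is to prove the biconditional by characterising exactly which ideals can avoid the hidden kernel $\Null:=\bigcap_{t\in G}\ker(E_t^{\RR})$, using the correspondence between ideals generated by their intersection with $C_0(X)$ and open $\varphi$-invariant sets supplied by Lemma \ref{lem:restriction_of_action_to_set}. First I would prove the forward direction. Assume every non-trivial ideal of $F_{\RR}(\alpha,u)$ is contained in $\Null$. To get minimality, suppose $U\subseteq X$ were a non-trivial open $\varphi$-invariant set; then $F_{\RR^U}(\alpha^U,u^U)$ is a non-trivial ideal (by Lemma \ref{lem:restriction_of_action_to_set}) whose intersection with $C_0(X)$ equals $C_0(U)\neq\{0\}$, so it cannot sit inside $\Null$ since $\Null\cap C_0(X)=\{0\}$ by Lemma \ref{lem:Fourier_decomposition_maps}. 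This contradiction forces $\varphi$ to be minimal. For the generalised intersection property, the hypothesis literally says $\Null$ contains every ideal meeting $C_0(X)$ trivially (any such ideal is non-trivial unless it is $\{0\}$, and is therefore $\subseteq\Null$), while $\Null$ itself has $\Null\cap C_0(X)=\{0\}$; hence $\Null$ is the largest ideal with zero intersection with $C_0(X)$, which is precisely the generalised intersection property with hidden ideal $\Null$.

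For the reverse direction, assume $\varphi$ is minimal and that the inclusion has the generalised intersection property with hidden ideal $\Null$. Let $J$ be any non-trivial ideal. I want to show $J\subseteq\Null$. By the defining property of the hidden ideal it suffices to show $J\cap C_0(X)=\{0\}$. The key step is this: $J\cap C_0(X)$ is an ideal of $C_0(X)$, hence of the form $C_0(U)$ for some open $U\subseteq X$, and by the covariance argument in the proof of Lemma \ref{lem:restriction_of_action_to_set} (disintegrating the identity representation and applying $\alpha_t=v_t(\cdot)v_t^{-1}$) this $U$ is $\varphi$-invariant. Minimality then forces $U=\emptyset$ or $U=X$. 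The case $U=X$ would give $C_0(X)\subseteq J$, so $J$ contains an approximate unit for the whole algebra and hence $J=F_{\RR}(\alpha,u)$, contradicting non-triviality of $J$; therefore $U=\emptyset$, i.e.\ $J\cap C_0(X)=\{0\}$, and the generalised intersection property yields $J\subseteq\Null$.

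Finally I would derive the "in particular" clause. The algebra $F_{\RR}(\alpha,u)$ is reduced exactly when $\Null=\{0\}$ (Definition \ref{def:reduced_crossed_product}), and $C_0(X)$ detects ideals exactly when the hidden ideal is $\{0\}$ (Definition \ref{def:visible}); note that detection of ideals already forces the algebra to be reduced by Remark \ref{rem:reduced_is_necessary}, so reducedness is automatic once detection holds. Combining these with the main equivalence: all non-trivial ideals lie in $\Null$, and if moreover $\Null=\{0\}$ this says all non-trivial ideals are $\{0\}$, i.e.\ there are no non-trivial ideals, which is simplicity. Conversely, if $F_{\RR}(\alpha,u)$ is simple it is certainly reduced and detects ideals (both hidden-ideal conditions collapse), and simplicity forces minimality by the forward direction. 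The only mild subtlety, and the place I would be most careful, is verifying that $C_0(X)\subseteq J$ genuinely implies $J$ is the whole algebra: this uses that $C_0(X)$ contains a contractive approximate unit for $F_{\RR}(\alpha,u)$ (noted in Section \ref{sec:preliminaries}) together with closedness of $J$, so that $b=\lim_i \mu_i b\in J$ for every $b$. The rest is bookkeeping of the definitions of reduced crossed product, generalised intersection property, and the hidden ideal.
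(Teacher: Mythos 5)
Your proof is correct and follows essentially the same route as the paper: both directions rest on Lemma \ref{lem:restriction_of_action_to_set} (ideals meeting $C_0(X)$ correspond to $\varphi$-invariant open sets) together with the fact from Lemma \ref{lem:Fourier_decomposition_maps} that $\bigcap_{t\in G}\ker(E_t^{\RR})\cap C_0(X)=\{0\}$. The only differences are presentational --- the paper argues the reverse direction contrapositively (an ideal not contained in the joint kernel contains some $F_{\RR^U}(\alpha^U,u^U)$ and hence, by minimality, is everything), whereas you argue directly via $J\cap C_0(X)=C_0(U)$, and you spell out the approximate-unit step and the ``in particular'' clause that the paper leaves implicit.
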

\begin{proof} If all  non-trivial ideals in $F_{\RR}(\alpha,u)$ are contained in  $\bigcap_{t\in G}\ker(E_t^{\RR})$,
then $\bigcap_{t\in G}\ker(E_t^{\RR})$ is the hidden ideal for $C_0(X)\subseteq F_{\RR}(\alpha,u)$, and  $\varphi$ is minimal by Lemma \ref{lem:restriction_of_action_to_set}.
Conversely, assume $\bigcap_{t\in G}\ker(E_t^{\RR})$ is the hidden ideal for $C_0(X)\subseteq F_{\RR}(\alpha,u)$ and let  $J$ be an ideal in $F_{\RR}(\alpha,u)$ which is not contained in $\bigcap_{t\in G}\ker(E_t^{\RR})$. Then $J$ intersects $C_0(X)$ non-trivially and  hence, by Lemma \ref{lem:restriction_of_action_to_set}, $J$ contains $F_{\RR^U}(\alpha^U,u^U)$ for some non-empty $\varphi$-invariant open $U\subseteq X$. Thus  if $\varphi$ is minimal, we necessarily have $J=F_{\RR}(\alpha,u)$, that is $J$  is trivial. 
\end{proof}
\begin{thm}[Simplicity]\label{cor:simplicity_minimality}
Let $\alpha$ be an  action of a discrete group $G$ on $C_0(X)$.
The following conditions are equivalent:
\begin{enumerate}
\item\label{enu:simplicity_minimality1} The dual map $\varphi$ is topologically free and minimal.
\item\label{enu:simplicity_minimality2} All reduced  twisted  crossed products $F_{\RR}(\alpha,u)$, $u\in Z^2(G, C_u(X))$, are  simple.
\item\label{enu:simplicity_minimality3}   $F^P(\alpha)$ is simple for some  (and hence all)  $P\subseteq \{1,\infty\}$.
\item\label{enu:simplicity_minimality4} For any  twisted  crossed product $F_{\RR}(\alpha,u)$, $u\in Z^2(G, C_u(X))$, admitting the Fourier decomposition
$\{E_t^{\RR}\}_{t\in G}$
all  non-trivial ideals in $F_{\RR}(\alpha,u)$ are contained in $\bigcap_{t\in G}\ker(E_t^{\RR})$.
\item\label{enu:simplicity_minimality5}   All  non-trivial ideals in $F^P(\alpha)$  are contained in $\ker(\Lambda_P)$ for some (and hence  all) $P\subseteq  [1,\infty]$.
\end{enumerate}
In particular, for any $P\subseteq [1,\infty]$, the full crossed product $F^P(\alpha)$ is simple if and only if the regular representation $\Lambda_P$ is injective
and $\varphi$ is topologically free and minimal.
\end{thm}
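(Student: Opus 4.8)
The plan is to treat the entire theorem as a bookkeeping consequence of three earlier results: the dichotomy of Lemma~\ref{lem:simplicity_vs_minimality}, the one-directional Corollary~\ref{cor:general_generalised_intersection} (topological freeness forces the generalised intersection property in \emph{every} crossed product), and the untwisted characterisation Theorem~\ref{thm:generalised_intersection_property}. I would organise the five conditions into two implication cycles running through \ref{enu:simplicity_minimality1}, namely $\ref{enu:simplicity_minimality1}\Rightarrow\ref{enu:simplicity_minimality4}\Rightarrow\ref{enu:simplicity_minimality5}\Rightarrow\ref{enu:simplicity_minimality1}$ and $\ref{enu:simplicity_minimality1}\Rightarrow\ref{enu:simplicity_minimality2}\Rightarrow\ref{enu:simplicity_minimality3}\Rightarrow\ref{enu:simplicity_minimality1}$, and then read off the final ``in particular'' clause separately.

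For the two forward arcs, suppose $\varphi$ is topologically free and minimal. For an arbitrary crossed product $F_{\RR}(\alpha,u)$ admitting the Fourier decomposition, Corollary~\ref{cor:general_generalised_intersection} supplies the generalised intersection property for $C_0(X)\subseteq F_{\RR}(\alpha,u)$ with hidden ideal $\bigcap_{t\in G}\ker(E_t^{\RR})$, and Lemma~\ref{lem:simplicity_vs_minimality} together with minimality then forces every non-trivial ideal into $\bigcap_{t\in G}\ker(E_t^{\RR})$; this is \ref{enu:simplicity_minimality4}. Applying \ref{enu:simplicity_minimality4} to the untwisted $F^P(\alpha)$, whose joint kernel equals $\ker\Lambda_P$ by Remark~\ref{rem:properties_of_F^P} and Corollary~\ref{cor:Fourier_maps_on_full}, yields \ref{enu:simplicity_minimality5}, so $\ref{enu:simplicity_minimality4}\Rightarrow\ref{enu:simplicity_minimality5}$. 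In parallel, a reduced crossed product has $\bigcap_{t\in G}\ker(E_t^{\RR})=\{0\}$, so the ``in particular'' part of Lemma~\ref{lem:simplicity_vs_minimality} (detection from Corollary~\ref{cor:general_generalised_intersection}, minimality from the hypothesis) makes it simple, giving \ref{enu:simplicity_minimality2}; and $\ref{enu:simplicity_minimality2}\Rightarrow\ref{enu:simplicity_minimality3}$ follows by taking $u\equiv 1$ and recalling that $F^P(\alpha)=F^P_{\red}(\alpha)$ is a genuine reduced crossed product for $P\subseteq\{1,\infty\}$ (Corollary~\ref{cor:properties_of_F^P}, Remark~\ref{rem:properties_of_F^P}).

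For the converse arcs I would feed \ref{enu:simplicity_minimality5} (respectively \ref{enu:simplicity_minimality3}) back into Lemma~\ref{lem:simplicity_vs_minimality} to extract two facts at once: that $\varphi$ is minimal, and that $C_0(X)\subseteq F^P(\alpha)$ has the generalised intersection property with hidden ideal $\ker\Lambda_P$. The latter is verbatim condition \ref{enu:generalised_intersection_property2} of Theorem~\ref{thm:generalised_intersection_property} (from \ref{enu:simplicity_minimality3} one instead records that $C_0(X)$ detects ideals in $F^T(\alpha)$, $T\subseteq\{1,\infty\}$, which is condition \ref{enu:generalised_intersection_property4} there), and either way topological freeness drops out, giving \ref{enu:simplicity_minimality1}. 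The structural point worth emphasising is that both converses are routed \emph{entirely} through untwisted algebras $F^P(\alpha)$, so only the untwisted Theorem~\ref{thm:generalised_intersection_property} is needed and the twisted Theorem~\ref{thm:twisted_generalised_intersection_property} is never invoked; this is what keeps the argument short despite the matrix-algebra examples showing that topological freeness can fail for non-simple reasons in the twisted world.

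Finally, the ``in particular'' clause: $F^P(\alpha)$ is simple iff it has no non-trivial ideal. If it is simple, the ideal $\ker\Lambda_P$ is trivial, and since $\Lambda_P$ has non-zero range it must be $\{0\}$, i.e.\ $\Lambda_P$ is injective, while vacuously \ref{enu:simplicity_minimality5} holds and hence \ref{enu:simplicity_minimality1}; conversely, if $\Lambda_P$ is injective and $\varphi$ is topologically free and minimal, then \ref{enu:simplicity_minimality1} gives \ref{enu:simplicity_minimality5}, so every non-trivial ideal would sit inside $\ker\Lambda_P=\{0\}$, forcing simplicity. I do not expect a genuine obstacle in any of this; the only points demanding care are the identification $\bigcap_{t\in G}\ker(E_{P,t})=\ker\Lambda_P$ and the legitimacy of each specialisation (that $F^P(\alpha)$ for $P\subseteq\{1,\infty\}$ is a bona fide reduced crossed product), both of which are already furnished by the cited results, so the proof amounts to assembling them correctly.
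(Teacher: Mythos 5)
Your proposal is correct and matches the paper's own proof essentially verbatim: the same implication graph through condition (1) (the paper states it as $(1)\Rightarrow(2),(4)$, then $(2)\Rightarrow(3)$, $(4)\Rightarrow(5)$, then $(3)\Rightarrow(1)$ and $(5)\Rightarrow(1)$, which is exactly your two cycles), the same reliance on Lemma~\ref{lem:simplicity_vs_minimality}, Corollary~\ref{cor:general_generalised_intersection}, Theorem~\ref{thm:generalised_intersection_property}, and Remark~\ref{rem:properties_of_F^P}/Corollary~\ref{cor:properties_of_F^P}, and the same derivation of the final clause from $(1)\Leftrightarrow(5)$. Your observation that both converses route only through untwisted algebras, so Theorem~\ref{thm:twisted_generalised_intersection_property} is never needed, is likewise implicit in the paper's proof.
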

\begin{proof} \ref{enu:simplicity_minimality1} implies \ref{enu:simplicity_minimality2} and \ref{enu:simplicity_minimality4}  by Lemma \ref{lem:simplicity_vs_minimality} and
Corollary  \ref{cor:general_generalised_intersection}. Implications  \ref{enu:simplicity_minimality2}$\Rightarrow$\ref{enu:simplicity_minimality3}
and \ref{enu:simplicity_minimality4}$\Rightarrow$\ref{enu:simplicity_minimality5} are immediate, 
see Remark \ref{rem:properties_of_F^P} and Corollary  \ref{cor:properties_of_F^P}. 
We have \ref{enu:simplicity_minimality3}$\Rightarrow$\ref{enu:simplicity_minimality1}
and \ref{enu:simplicity_minimality5}$\Rightarrow$\ref{enu:simplicity_minimality1} by  Lemma \ref{lem:simplicity_vs_minimality}
and Theorem \ref{thm:generalised_intersection_property}. 
This proves the equivalence of conditions \ref{enu:simplicity_minimality1}-\ref{enu:simplicity_minimality5}. 
The final claim now follows from the equivalence \ref{enu:simplicity_minimality1}$\Leftrightarrow$\ref{enu:simplicity_minimality5}.
\end{proof}

In order to apply our results to non-minimal actions, let us first consider a general  inclusion $A\subseteq B$  of Banach algebras.
Let $\I(A)$  be the set of all (closed, two-sided) ideals in $A$, equipped with
the partial order given by inclusion. This makes $\I(A)$ a complete lattice: for a family of ideals
$\{I_{s}\}_{s\in S} \subseteq \I(A)$, their join in $\I(A)$ 
is the closed linear span $\clsp\{a: a\in I_s, s\in S\}$, and
their meet
 in $\I(A)$ is the intersection
$ \bigcap_{s\in S} I_s$. 
\begin{defn}
Let $A$ be a Banach subalgebra of a Banach algebra $B$. We define maps 
$\Res:\I(B)\to \I(A)$ and $\Ex:\I(A)\to \I(B)$ where, for  $J\in \I(B)$, $I\in \I(A)$, 
$$
\Res(J):=J\cap A\quad\text{ and }\quad \text{$\Ex(I)
$ is the ideal in $B$ generated by $I$}
$$
(if $B$ has an approximate unit, then $\Ex(I)=\overline{BIB}$). We call elements of $\Res(\I(B))\subseteq \I(A)$  \emph{restricted ideals} and elements of $\Ex(\I(A))\subseteq \I(B)$ \emph{extended ideals} for the inclusion $A\subseteq B$.
\end{defn}
Clearly, the maps $\Res$ and $\Ex$ preserve inclusions and for any $I\in \I(A)$, $J\in \I(B)$ we have
$$
I\subseteq \Res(J)\,\, \Longleftrightarrow\,\,  \Ex(I)\subseteq J.
$$
Thus $\Res$ and $\Ex$ form a monotone Galois connection (they are are contravariantly adjoint maps) between the lattices $\I(A)$ and $\I(B)$.  For $C^*$-algebras this was  noticed already by Green, see
\cite[Proposition~9.(i)]{Green:Local_twisted}, and has a number of useful consequences. For instance, $\Ex$ and $\Res$ give an isomorphism 
$\Res(\I(B))\cong \Ex(\I(A))$ and these sets with an inclusion as a preorder are  complete lattices, cf. \cite[Proposition 2.9]{Kwa-Meyer0}.
\begin{defn}[{cf. \cite[Definition 2.11]{Kwa-Meyer0}}]
  \label{def:seperates}
 We say that \emph{$A$ separates ideals in $B$}   if for every two different  $J_1, J_2\in \I(B)$ we have  $J_1\cap A \neq J_2\cap A$.
		\end{defn}
\begin{lem}[{cf. \cite[Proposition 2.12]{Kwa-Meyer0}}]\label{lem:separation_is_residual_detection}
Let $A\subseteq B$ be an inclusion of Banach algebras. The following conditions are equivalent:
\begin{enumerate}
\item\label{enu:separation_is_residual_detection1} $A$ seperates ideals in $B$ (the map $\Res$ is injective).
\item\label{enu:separation_is_residual_detection2} Every ideal in $B$ is extended from $A$ (the map $\Ex$ is surjective).
\item\label{enu:separation_is_residual_detection3} $\I(B)\cong \Res(\I(B))\subseteq \I(A)$ with the isomorphism given by $\Res$.
\end{enumerate}
If $A$ is a $C^*$-algebra, then for every $I\in \Res(\I(B))$, the image of  $A$ in the quotient $B/\Ex(I)$ is naturally isometrically isomorphic to $A/I$ and
each of the above conditions is equivalent to 
\begin{enumerate}\setcounter{enumi}{3}
\item\label{enu:separation_is_residual_detection4}  $A$ `residually detects ideals' in $B$, that is, $A/I$ detects ideals in the quotient $B/\Ex(I)$
 for every $I\in \Res(\I(B))$.
\end{enumerate}
\end{lem}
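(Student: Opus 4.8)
The plan is to run everything through the monotone Galois connection $(\Ex,\Res)$ already recorded above. First I would isolate the two unit/counit inequalities $\Ex(\Res(J))\subseteq J$ and $I\subseteq\Res(\Ex(I))$ (each an immediate instance of the adjunction $\Ex(I)\subseteq J\Leftrightarrow I\subseteq\Res(J)$), and derive from them the triangle identities $\Res\circ\Ex\circ\Res=\Res$ and $\Ex\circ\Res\circ\Ex=\Ex$. These identities are the only general tools needed for the equivalence of the first three conditions, and they also yield, for any $I\in\Res(\I(B))$, the fixed-point relation $\Res(\Ex(I))=I$.

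For the equivalence of \ref{enu:separation_is_residual_detection1} and \ref{enu:separation_is_residual_detection2} the clean pivot is the single condition $\Ex\circ\Res=\operatorname{id}_{\I(B)}$. If $\Res$ is injective, applying it to $\Res(\Ex(\Res(J)))=\Res(J)$ forces $\Ex(\Res(J))=J$; conversely $\Ex\circ\Res=\operatorname{id}$ makes $\Res$ injective, since $J$ is then recovered from $\Res(J)$. Symmetrically, $\Ex$ is surjective iff every $J$ lies in its image, and via $\Ex\circ\Res\circ\Ex=\Ex$ this is again equivalent to $\Ex(\Res(J))=J$ for all $J$. Hence \ref{enu:separation_is_residual_detection1} and \ref{enu:separation_is_residual_detection2} are both equivalent to $\Ex\circ\Res=\operatorname{id}_{\I(B)}$. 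For \ref{enu:separation_is_residual_detection3} I would note that this same condition says precisely that $\Res\colon\I(B)\to\Res(\I(B))$ is a bijection whose inverse is the restriction of the monotone map $\Ex$; as $\Res$ is itself monotone, it is then an isomorphism of posets onto its image, which is \ref{enu:separation_is_residual_detection3}, while \ref{enu:separation_is_residual_detection3}$\Rightarrow$\ref{enu:separation_is_residual_detection1} is trivial.

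Now assume $A$ is a $C^*$-algebra and fix $I\in\Res(\I(B))$. The fixed-point property $I=\Res(\Ex(I))$ gives $A\cap\Ex(I)=I$, so the image $q(A)$ of $A$ under the quotient map $q\colon B\to B/\Ex(I)$ is algebraically $A/I$. I would then upgrade this to an isometric identification: the induced map $A/I\to B/\Ex(I)$ is an injective contractive homomorphism out of the $C^*$-algebra $A/I$, hence isometric by Lemma~\ref{lem:incompressibility}. This is exactly the step where the $C^*$-hypothesis is indispensable, and I regard it, together with the intersection bookkeeping below, as the main (if mild) obstacle: for a general Banach $A$ the comparison map $A/(A\cap\Ex(I))\to B/\Ex(I)$ is merely contractive, the two quotient norms can genuinely differ, and \ref{enu:separation_is_residual_detection4} would not even be a norm-intrinsic statement.

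Finally I would prove \ref{enu:separation_is_residual_detection1}$\Leftrightarrow$\ref{enu:separation_is_residual_detection4} by translating detection in the quotient into a statement about the fibres of $\Res$. For an ideal $J\supseteq\Ex(I)$ put $K:=J/\Ex(I)$; since $q|_A$ has kernel exactly $I$ and $I\subseteq\Res(J)$, a short computation gives $K\cap q(A)=q(\Res(J))\cong\Res(J)/I$, which vanishes iff $\Res(J)=I$. Thus \emph{$A/I$ detects ideals in $B/\Ex(I)$} holds iff the only ideal $J$ with $\Res(J)=I$ is $J=\Ex(I)$, i.e.\ iff the fibre $\Res^{-1}(I)$ is the singleton $\{\Ex(I)\}$. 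Since the fibres of $\Res$ over $\Res(\I(B))$ exhaust $\I(B)$, demanding this for every $I\in\Res(\I(B))$ is precisely injectivity of $\Res$, which is \ref{enu:separation_is_residual_detection1}, closing the cycle.
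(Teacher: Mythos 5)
Your proposal is correct and takes essentially the same route as the paper: the equivalence of (1)--(3) via the formal Galois-connection (triangle) identities for $\Res$ and $\Ex$, the isometric identification of $A/I$ with the image of $A$ in $B/\Ex(I)$ via the fixed-point relation $A\cap\Ex(I)=\Res(\Ex(I))=I$ together with Lemma~\ref{lem:incompressibility}, and the equivalence with (4) via the correspondence between ideals of $B/\Ex(I)$ and ideals $J\supseteq\Ex(I)$, using exactly the computation $K\cap q(A)=q(\Res(J))$. The only cosmetic difference is packaging: you state the last step as ``detection at $I$ holds iff the fibre $\Res^{-1}(I)$ equals $\{\Ex(I)\}$'' and then quantify over $I$ to get injectivity of $\Res$, whereas the paper argues the contrapositive, exhibiting the same ideals $J/\Ex(I)$ (resp.\ preimages) to show that (2) fails iff (4) fails.
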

\begin{proof} Equivalences between \ref{enu:separation_is_residual_detection1}-\ref{enu:separation_is_residual_detection3}  hold for any Galois connection (in particular $\Res$ and $\Ex$ are generalised inverses of each other). Assume now that $A$ is a $C^*$-algebra. Let $I\in \Res(\I(B))$. 
Then $A\cap \Ex(I)=\Res(\Ex(I))=I$ and hence the map $A/I\ni a +I \mapsto a+\Ex(I)\in B/\Ex(I)$ is an injective representation. By minimality of the $C^*$-norm, see Lemma \ref{lem:incompressibility}, this representation is in fact isometric. 
Now assume  that  \ref{enu:separation_is_residual_detection2} fails. Then there is $J\in \I(B)$ with $J\neq \Ex(I)$ where $I:=J\cap A\in \Res(\I(B))$. 
Since $\Ex(I)\subsetneq J$,  the image of $J$ in $B/\Ex(I)$ is a non-zero  ideal that can be identified with the quotient  $J/\Ex(I)$ (and so in particular this image is closed), and whose intersection with $A/I$ is zero.
 Thus $A/I$ does not detect ideals in  $B/\Ex(I)$,  and hence \ref{enu:separation_is_residual_detection4} fails. Conversely, assume that $A/I$ does not detect ideals in  $B/\Ex(I)$ for some $I\in \Res(\I(B))$, so 
there is a non-zero ideal $K\subseteq B/\Ex(I)$ whose intersection with $A/I$ is zero. The preimage of $K$ under the quotient map is an ideal $J\in \I(B)$ such 
that $J\cap A=I$, but $\Ex(I)\subsetneq J$, and hence  \ref{enu:separation_is_residual_detection2} fails.
\end{proof}
Let us now consider the case when  $A=C_0(X)$ and $B=F_{\RR}(\alpha,u)$ is  a Banach algebra crossed product.
It follows from Lemma \ref{lem:restriction_of_action_to_set} that 
$\Res(\I(B))$ can be identified with the lattice $\I^\varphi(X)$ of all open $\varphi$-invariant sets in $X$. It also implies that  
$J\in\Ex(\I(A))$ if and only if $J=F_{\RR^U}(\alpha^U,u^U) $
for some $U\in \I^\varphi(X)$. Thus, in view of Lemma \ref{lem:separation_is_residual_detection} (see statement \ref{enu:separation_is_residual_detection4}),
for separation of ideals in $F_{\RR}(\alpha,u)$ we need the following property, which in the $C^*$-algebraic considerations  is called \emph{exactness}, 
see \cite[Definition 1.5]{Sierakowski} (as it can be used to characterise exactness of the group, see \cite[Theorem 5.1.10]{Brown-Ozawa}). We find the name \emph{residually reduced} more appropriate in our more general context.

\begin{defn}\label{def:residually_reduced} A \emph{crossed product $F_{\RR}(\alpha,u)$ is residually reduced} if 
for every $U\in \I^\varphi(X)$
the quotient Banach algebra $F_{\RR}(\alpha,u)/F_{\RR^U}(\alpha^U,u^U)$ is naturally a reduced crossed product for $(\alpha^Y,u^Y)$ where $Y=X\setminus U$. 
\end{defn}
\begin{rem}
If $F_{\RR}(\alpha,u)$ is residually reduced, then it is reduced   (consider $U=\emptyset$).  When $\varphi$ is minimal
the converse implication  holds trivially (as then $\I^\varphi(X)=\{\emptyset, X\}$). 
\end{rem}
\begin{lem}\label{lem:residually_reduced_characterisation}
$F_{\RR}(\alpha,u)$ is a residually reduced crossed product if and only if $F_{\RR}(\alpha,u)$ admits the Fourier decomposition $\{E_t^{\RR}\}_{t\in G}$  and for any open $\varphi$-invariant subset $U\subseteq X$ we have 
$$
F_{\RR^U}(\alpha^U,u^U)=\{b\in F_{\RR}(\alpha,u):  E_t^{\RR}(b)\in C_0(U) \text{ for all }t\in G\}.
$$
  \end{lem}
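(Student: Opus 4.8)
The plan is to transport the Fourier decomposition of $B:=F_{\RR}(\alpha,u)$ across the quotient map to the algebra appearing in Definition \ref{def:residually_reduced}, and to read off both implications from a single bridge identity. Throughout I fix an open $\varphi$-invariant $U\subseteq X$, put $Y:=X\setminus U$, write $J_U:=F_{\RR^U}(\alpha^U,u^U)$ for the ideal generated by $C_0(U)$ (Lemma \ref{lem:restriction_of_action_to_set}), let $\rho_U\colon B\to Q_U:=B/J_U$ be the quotient map, and let $q\colon C_0(X)\to C_0(Y)$ be restriction, whose kernel is $C_0(U)$ and which realises the isometric identification $C_0(X)/C_0(U)\cong C_0(Y)$. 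The whole argument hinges on the interplay between the inclusion $E_t^{\RR}(J_U)\subseteq C_0(U)$ and the candidate Fourier maps $q\circ E_t^{\RR}$ on $Q_U$.

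For the forward implication I would assume $B$ is residually reduced. Taking $U=\emptyset$ already forces $B$ to be reduced, so it admits the Fourier decomposition $\{E_t^{\RR}\}_{t\in G}$ by Lemma \ref{lem:Fourier_decomposition_maps}. For general $U$, residual reducedness supplies a reduced crossed-product structure on $Q_U$ over $(\alpha^Y,u^Y)$, hence Fourier maps $E_t^{Y}\colon Q_U\to C_0(Y)$. On the dense image $\rho_U(F(\alpha,u))$ these agree with $q\circ E_t^{\RR}$, since both send $\rho_U(b)$, $b\in F(\alpha,u)$, to $q(b(t))$; as $E_t^{Y}\circ\rho_U$ and $q\circ E_t^{\RR}$ are contractive and agree on a dense subalgebra, the identity $E_t^{Y}\circ\rho_U=q\circ E_t^{\RR}$ holds on all of $B$. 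Because $Q_U$ is reduced, $b\in J_U=\ker\rho_U$ iff $E_t^{Y}(\rho_U(b))=0$ for all $t$, i.e. iff $q(E_t^{\RR}(b))=0$, i.e. iff $E_t^{\RR}(b)\in C_0(U)$ for all $t$, which is precisely the claimed formula.

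For the converse I would assume $B$ admits the Fourier decomposition and that the displayed formula holds for every $U\in\I^\varphi(X)$. The formula yields $E_t^{\RR}(J_U)\subseteq C_0(U)=\ker q$, which lets me set $E_t^{Y}(\rho_U(b)):=q(E_t^{\RR}(b))$: this is well defined, as $b-b'\in J_U$ forces $q(E_t^{\RR}(b-b'))=0$; and it is contractive, since for any $j\in J_U$ one has $q(E_t^{\RR}(b))=q(E_t^{\RR}(b-j))$, whence $\|q(E_t^{\RR}(b))\|\le\|E_t^{\RR}(b-j)\|\le\|b-j\|$, and taking the infimum over $j$ gives $\le\|\rho_U(b)\|$. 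Evaluating on $b\in F(\alpha,u)$ shows $E_1^{Y}$ extends evaluation at $1$ on $F(\alpha^Y,u^Y)$, so $Q_U$ admits the Fourier decomposition, and $\bigcap_t\ker E_t^{Y}=\{0\}$ is immediate: $E_t^{Y}(\rho_U(b))=0$ for all $t$ means $E_t^{\RR}(b)\in C_0(U)$ for all $t$, hence $b\in J_U$ by the formula, i.e. $\rho_U(b)=0$. Thus $Q_U$ is reduced.

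It remains to check that this is the \emph{natural} crossed product for $(\alpha^Y,u^Y)$, and this bookkeeping is where I expect the only real friction. One must verify that the dense core of $Q_U$ is genuinely $F(\alpha^Y,u^Y)$: applying the formula to $b\in F(\alpha,u)$ gives $F(\alpha,u)\cap J_U=\ell^1(G,C_0(U))=F(\alpha^U,u^U)$, so $\rho_U(F(\alpha,u))\cong F(\alpha,u)/F(\alpha^U,u^U)\cong F(\alpha^Y,u^Y)$ through the isometric $\ell^1$-quotient identification $\ell^1(G,C_0(X))/\ell^1(G,C_0(U))\cong\ell^1(G,C_0(Y))$; and one must check that the coefficient algebra embeds isometrically, i.e. $\|\rho_U(a)\|=\|a|_Y\|$ for $a\in C_0(X)$, which follows by bounding $\|\rho_U(a)\|$ below by $\|E_1^{Y}(\rho_U(a))\|=\|q(a)\|$ and above by $\inf_{a'\in C_0(U)}\|a-a'\|=\|q(a)\|$. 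The principal obstacle is thus not any single deep step but aligning three isometric identifications — $C_0(X)/C_0(U)\cong C_0(Y)$, the $\ell^1$-quotient, and $Q_U$ as a completion of $F(\alpha^Y,u^Y)$ — so that the induced $E_t^{Y}$ are exactly the Fourier decomposition of a reduced crossed product over $(\alpha^Y,u^Y)$; each piece collapses, via the displayed formula, to the one inclusion $E_t^{\RR}(J_U)\subseteq C_0(U)$.
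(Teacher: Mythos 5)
Your proof is correct and takes essentially the same route as the paper's: both descend the Fourier decomposition maps $\{E_t^{\RR}\}_{t\in G}$ to the quotient $F_{\RR}(\alpha,u)/F_{\RR^U}(\alpha^U,u^U)$, viewed as a completion of $F(\alpha^Y,u^Y)$, and observe that this quotient is reduced exactly when the vanishing of the induced maps $q\circ E_t^{\RR}$ characterises membership in $F_{\RR^U}(\alpha^U,u^U)$, which is the displayed formula. Your write-up simply makes explicit the well-definedness, contractivity and $\ell^1$-quotient identifications that the paper treats as routine.
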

\begin{proof} If $F_{\RR}(\alpha,u)$ is a reduced crossed prodcut  it admits the FD. Assume then that $F_{\RR}(\alpha,u)$  admits the FD
and let $U\in \I^\varphi(X)$.
The maps $\{E_t^{\RR}\}_{t\in G}$
 descend to contractive linear maps $E_t^{\RR^Y}:F_{\RR}(\alpha,u)/F_{\RR^U}(\alpha^U,u^U)\to C_0(X)/C_0(U)\cong C_0(Y)$; 
$$
E_t^{\RR^Y}(b+F_{\RR^U}(\alpha^U,u^U)):= E_t^{\RR}(b)+ C_0(U), \qquad b\in F_{\RR}(\alpha,u).
$$
These maps form the FD for $F_{\RR}(\alpha,u)/F_{\RR^U}(\alpha^U,u^U)$ viewed   as the completion of $ F(\alpha^Y,u^Y)$. This quotient crossed product is reduced if and only if $\bigcap_{t\in G} E_t^{\RR^Y}=\{0\}$, which holds  if and only if $F_{\RR^U}(\alpha^U,u^U)=\{b\in F_{\RR}(\alpha,u):  E_t^{\RR}(b)\in C_0(U) \text{ for all }t\in G\}$.
\end{proof}
\begin{prop}\label{prop:residually_reduced_crossed_products}
For any $P\subseteq \{1,\infty\}$ the crossed product $F^P(\alpha,u)$ is always residually reduced. More specifically, for  $U\in \I^\varphi(X)$ and  $Y:=X\setminus U$ the restriction map 
$
\ell^{1}(G,C_0(X))\ni b\mapsto b|_{Y}\in \ell^{1}(G,C_0(Y)) \text{  where  } b|_{Y}(t):=b(t)|_Y,\,\, t\in G
,
$
 yields an
isometric isomorphism 
\begin{equation}\label{eq:isomorphisms_of_quotients}
F^P(\alpha,u)/F^P(\alpha^U,u^U)\cong F^P(\alpha^Y,u^Y).
\end{equation}
\end{prop}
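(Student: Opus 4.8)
The plan is to prove that the coefficient-wise restriction map $R\colon b\mapsto b|_Y$ induces the isometric isomorphism \eqref{eq:isomorphisms_of_quotients}, from which residual reducedness is immediate. For $P=\emptyset$ the claim is the elementary isometric identification $\ell^1(G,C_0(X))/\ell^1(G,C_0(U))\cong\ell^1(G,C_0(Y))$ coming from $C_0(X)/C_0(U)\cong C_0(Y)$, so I assume $P\neq\emptyset$. Since $Y=X\setminus U$ is closed and $\varphi$-invariant, $\alpha^Y$ and $u^Y$ are honest restrictions of $\alpha$ and $u$, and one checks directly that $R$ is a $*$-homomorphism $F(\alpha,u)\to F(\alpha^Y,u^Y)$ with kernel $F(\alpha^U,u^U)$ and dense range (the range contains every finitely supported section, using surjectivity of $C_0(X)\to C_0(Y)$). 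For $P\subseteq\{1,\infty\}$ the norms are the explicit maxima of pointwise sums over $x\in X$ in \eqref{eq:norms_for_1_and_infty}; these only grow when $Y$ is replaced by $X$, so $R$ is contractive and extends to a contractive homomorphism $F^P(\alpha,u)\to F^P(\alpha^Y,u^Y)$ annihilating $F^P(\alpha^U,u^U)$, hence descends to a contraction $\bar R$ on the quotient. One inequality is free: for any $c\in F^P(\alpha^U,u^U)$ we have $(b+c)|_Y=b|_Y$, so $\|b|_Y\|_{P}\le\|b+c\|_{P}$ and therefore $\|b|_Y\|_{P}\le\|b+F^P(\alpha^U,u^U)\|$.

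The substance is the reverse inequality, which I would establish on the dense subalgebra $C_c(G,C_c(X))$. Fix such a $b$ and $\epsilon>0$, and set $M_1:=\max_{x\in Y}\sum_{t}|b(t)(x)|$ and $M_\infty:=\max_{x\in Y}\sum_{t}|b(t)(\varphi_t(x))|$; the target quotient norm is $\max\{M_p:p\in P\}$. The functions $S(x):=\sum_t|b(t)(x)|$ and $\widetilde S(x):=\sum_t|b(t)(\varphi_t(x))|$ lie in $C_0(X)$, so $\{S\ge M_1+\epsilon\}$ and $\{\widetilde S\ge M_\infty+\epsilon\}$ are compact, and being disjoint from $Y$ they are contained in $U$. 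Put $K:=\{S\ge M_1+\epsilon\}\cup\bigcup_{t\in\supp b}\varphi_t\bigl(\{\widetilde S\ge M_\infty+\epsilon\}\bigr)$, a compact subset of $U$ by $\varphi$-invariance of $U$ and finiteness of $\supp b$. By Urysohn choose $g\in C_c(U)$ with $0\le g\le 1$ and $g\equiv 1$ on $K$, and put $c:=-g\cdot b\in F(\alpha^U,u^U)$, so that $b+c=(1-g)b$ coefficient-wise. Then $\|b+c\|_{L^1}=\max_x(1-g(x))S(x)\le M_1+\epsilon$, since $g\equiv 1$ kills every $x$ with $S(x)\ge M_1+\epsilon$; and $\|b+c\|_{L^\infty}=\max_x\sum_t(1-g(\varphi_t(x)))|b(t)(\varphi_t(x))|\le M_\infty+\epsilon$, because whenever $\widetilde S(x)\ge M_\infty+\epsilon$ each $\varphi_t(x)$ with $t\in\supp b$ lies in $K$, making the summand vanish. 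Hence $\|b+c\|_P\le\max\{M_p:p\in P\}+\epsilon$, which is the reverse inequality. Thus $\bar R$ is isometric on a dense subalgebra, hence isometric; together with the dense range this makes $\bar R$ an isometric isomorphism, establishing \eqref{eq:isomorphisms_of_quotients}.

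Residual reducedness then follows at once: for $P\subseteq\{1,\infty\}$ we have $F^P=F^P_{\red}$ by Corollary \ref{cor:properties_of_F^P}, so $F^P(\alpha^Y,u^Y)$ is a reduced crossed product, and since $R$ intertwines the Fourier coefficients, $E_t(R(b))=E_t(b)|_Y$ for $b\in F(\alpha,u)$, the isomorphism $\bar R$ carries the descended Fourier decomposition of the quotient onto that of $F^P(\alpha^Y,u^Y)$; hence the quotient is naturally a reduced crossed product for $(\alpha^Y,u^Y)$ in the sense of Definition \ref{def:residually_reduced} (compare Lemma \ref{lem:residually_reduced_characterisation}). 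I expect the main obstacle to be the reverse norm inequality in the case $P=\{1,\infty\}$, where the single cutoff $(1-g)b$ must tame the $L^1$ and $L^\infty$ norms simultaneously. The resolution is that the $L^\infty$ formula evaluates coefficients at the translates $\varphi_t(x)$, so $g$ must equal $1$ on a $\varphi$-saturated (over $\supp b$) compact set; enlarging $K$ to absorb both the $S$- and $\widetilde S$-requirements costs nothing and lets one cutoff serve both norms.
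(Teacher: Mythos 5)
Your proof is correct, and at the top level it follows the same route as the paper: both compute the quotient norm on the dense subalgebra of finitely supported elements by exhibiting, for each $\varepsilon>0$, an explicit $c\in F(\alpha^U,u^U)$ with $\|b+c\|\le\|b|_Y\|+\varepsilon$ in each of the relevant norms from \eqref{eq:norms_for_1_and_infty}, and then conclude via Tietze extension and density that the induced map on the quotient is a surjective isometry, whence \eqref{eq:isomorphisms_of_quotients}. Where you genuinely diverge is in the construction of $c$, and your version is the more robust one. The paper chooses $c(t)\in C_0(U)$ coefficient-by-coefficient so that $\max_{x\in U}|b(t)(x)+c(t)(x)|\le\max_{x\in\partial U}|b(t)(x)|+\varepsilon/|G_b|$; this directly controls the $\ell^1$-norm of $F(\alpha,u)$ (a sum of sup-norms), but for the $F^1$- and $F^\infty$-norms it only yields, at points of $U$, the bound $\sum_{t\in G_b}\max_{\partial U}|b(t)|+\varepsilon$, and this sum of maxima can strictly exceed $\max_{x\in Y}\sum_{t}|b(t)(x)|$ when the coefficientwise maxima occur at different boundary points, so the paper's $c(t)$ implicitly have to be chosen coherently. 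Your single multiplicative cutoff $c=-g\,b$, with $g\equiv 1$ on the compact subset of $U$ assembled from the level set of $S$ together with the $\varphi_t$-translates, $t\in\supp b$, of the level set of $\widetilde S$, is exactly such a coherent choice: it tames the $F^1$- and $F^\infty$-norms simultaneously, which is the crux you correctly isolated for $P=\{1,\infty\}$. Symmetrically, your cutoff does not control the sum-of-sup-norms of $F(\alpha,u)$ (a single coefficient $|b(t)|$ can be large at an interior point where $S<M_1+\epsilon$), so your separate treatment of $P=\emptyset$ via the elementary identification $\ell^1(G,C_0(X))/\ell^1(G,C_0(U))\cong\ell^1(G,C_0(Y))$ is not a convenience but a necessity --- and it is valid. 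Finally, your closing step, that the isometric identification with $F^P(\alpha^Y,u^Y)=F^P_{\red}(\alpha^Y,u^Y)$ (Corollary \ref{cor:properties_of_F^P}) intertwines the Fourier coefficients and hence exhibits the quotient as a reduced crossed product in the sense of Definition \ref{def:residually_reduced}, spells out cleanly what the paper leaves implicit.
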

\begin{proof}
Let $b
\in C_c(G, C_0(X))\subseteq F(\alpha,u)$. For any $c\in F(\alpha^U,u^U)$ we clearly have inequalities 
$$
\|b+c\|_{ F(\alpha,u)}=\sum_{t\in G}\|b(t)+c(t)\|_{\infty} \geq  \sum_{t\in G}\|b(t)|_{Y}\|_{\infty} =\|b|_{Y}\|_{F(\alpha^Y,u^Y)},
$$
$$
 \|b+c\|_{F^1(\alpha,u)} =\max_{x\in X} \sum_{t\in G}|[b(t)+c(t)](x)| \geq   \max_{x\in Y} \sum_{t\in G}|b(t)(x)| =\|b|_{Y}\|_{F^1(\alpha^Y,u^Y)},
$$
$$
 \|b+c\|_{F^{\infty}(\alpha,u)} =\max_{x\in X} \sum_{t\in G}|[b(t)+c(t)](\varphi_t(x))| \geq   \max_{x\in Y} \sum_{t\in G}|b(t)(\varphi_t(x))| =\|b|_{Y}\|_{F^\infty(\alpha^Y,u^Y)}.
$$
The set $G_b=\{t\in G: b(t)\neq 0\}$ is finite. For any $\varepsilon>0$ and any $t\in G_b$ there is $c(t)\in C_0(U)$ such that
 $\max_{x\in U}|b(t)(x)+c(t)(x)|\leq \max_{x\in \partial U}|b(t)(x)|+\varepsilon/|G_b| $ where $\partial U:=\overline{U}\cap Y$ is the common boundary of $U$ and $Y$.  
This gives $c\in C_c(G, C_0(X)) \subseteq F(\alpha^U,u^U)$ such that  
$$
\|b+c\|_{ F(\alpha,u)} \leq \|b|_{Y}\|_{F(\alpha^Y,u^Y)}+\varepsilon,\quad  \|b+c\|_{F^1(\alpha,u)} \leq  \|b|_{Y}\|_{F^1(\alpha^Y,u^Y)}  +\varepsilon
$$
and  $\|b+c\|_{F^\infty(\alpha,u)} \leq  \|b|_{Y}\|_{F^\infty(\alpha^Y,u^Y)}  +\varepsilon$. This shows that 
$\inf_{c\in F(\alpha^U,u^U)} \|b+c\|_{ F(\alpha,u)} = \|b|_{Y}\|_{F(\alpha^Y,u^Y)}$
and if $P\subseteq \{1,\infty\}$ is non-empty, then 
$$
\inf_{c\in F^P(\alpha^U,u^U)} \max_{p\in P}\|b+c\|_{ F^p(\alpha,u)}=\max_{p\in P}\|b|_{Y}\|_{F^p(\alpha^Y,u^Y)}.
$$
Hence  the map $b\mapsto b|_{Y}$ induces the surjective isometry  \eqref{eq:isomorphisms_of_quotients}, which clearly is linear and multiplicative, 
and hence it is an isometric isomorphism.
\end{proof}

\begin{lem}\label{lem:separation_is_residual_detection_actions}
Let $F_{\RR}(\alpha,u)$  be a crossed product. The following conditions are equivalent:
\begin{enumerate}
\item\label{enu:separation_is_residual_detection_actions1} $C_0(X)$ separates ideals in $F_{\RR}(\alpha,u)$. 
\item\label{enu:separation_is_residual_detection_actions2}  Every ideal in  $F_{\RR}(\alpha,u)$ is generated by its intersection with $C_0(X)$.
\item\label{enu:separation_is_residual_detection_actions3} We have a lattice isomorphism $\I^{\varphi}(X)\cong\I(F_{\RR}(\alpha,u)) $ given 
by $U\mapsto F_{\RR^U}(\alpha^U,u^U)$.
\item\label{enu:separation_is_residual_detection_actions4}  $C_0(X\setminus U)$ detects ideals in $F_{\RR}(\alpha,u)/F_{\RR^U}(\alpha^U,u^U)$ for every $U\in \I^\varphi(X)$.
\end{enumerate}
If these equivalent conditions hold, then $F_{\RR}(\alpha,u)$ is necessarily residually reduced.
\end{lem}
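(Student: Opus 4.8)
The plan is to deduce the whole lemma from the abstract Galois-connection result, Lemma \ref{lem:separation_is_residual_detection}, applied to the inclusion $A=C_0(X)\subseteq B=F_{\RR}(\alpha,u)$, and then to translate its four clauses into the present setting. Since $C_0(X)$ is a $C^*$-algebra, the final clause \ref{enu:separation_is_residual_detection4} of that lemma is available as well. The dictionary is the one recorded just before the statement: by Lemma \ref{lem:restriction_of_action_to_set} the restricted ideals are exactly $\Res(\I(B))=\{C_0(U):U\in\I^\varphi(X)\}$, the extended ideals are exactly the subalgebras $F_{\RR^U}(\alpha^U,u^U)$ with $U\in\I^\varphi(X)$, and these are matched by $\Ex(C_0(U))=F_{\RR^U}(\alpha^U,u^U)$ and $\Res(F_{\RR^U}(\alpha^U,u^U))=C_0(U)$.

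With this dictionary the four conditions match up directly. Condition \ref{enu:separation_is_residual_detection_actions1} is, word for word, clause \ref{enu:separation_is_residual_detection1} (injectivity of $\Res$). For condition \ref{enu:separation_is_residual_detection_actions2}, Lemma \ref{lem:restriction_of_action_to_set} says that an ideal is generated by its intersection with $C_0(X)$ precisely when it has the form $F_{\RR^U}(\alpha^U,u^U)$, i.e.\ precisely when it is extended; so \ref{enu:separation_is_residual_detection_actions2} is clause \ref{enu:separation_is_residual_detection2} (surjectivity of $\Ex$). Condition \ref{enu:separation_is_residual_detection_actions3} is clause \ref{enu:separation_is_residual_detection3}: $\Res$ restricts to an isomorphism $\I(B)\cong\Res(\I(B))$ with inverse $\Ex$, and under the lattice identification $\Res(\I(B))\cong\I^\varphi(X)$ this inverse becomes $U\mapsto\Ex(C_0(U))=F_{\RR^U}(\alpha^U,u^U)$, which is exactly the displayed map. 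Finally, since $C_0(X)$ is a $C^*$-algebra, clause \ref{enu:separation_is_residual_detection4} applies: for $I=C_0(U)$ we have $\Ex(I)=F_{\RR^U}(\alpha^U,u^U)$ and, by the isometric identification established in the proof of Lemma \ref{lem:separation_is_residual_detection}, the image of $C_0(X)$ in $B/\Ex(I)$ is $C_0(X)/C_0(U)\cong C_0(X\setminus U)$; thus residual detection of ideals is precisely condition \ref{enu:separation_is_residual_detection_actions4}. This gives the equivalence of \ref{enu:separation_is_residual_detection_actions1}--\ref{enu:separation_is_residual_detection_actions4}.

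For the concluding assertion I would argue straight from condition \ref{enu:separation_is_residual_detection_actions4}. Fix $U\in\I^\varphi(X)$ and set $Y=X\setminus U$. By (the proof of) Lemma \ref{lem:residually_reduced_characterisation} the Fourier maps $E_t^{\RR}$ descend to a Fourier decomposition $\{E_t^{\RR^Y}\}_{t\in G}$ on the quotient $F_{\RR}(\alpha,u)/F_{\RR^U}(\alpha^U,u^U)$, exhibiting it as a Banach algebra crossed product for $(\alpha^Y,u^Y)$. Applying Lemma \ref{lem:Fourier_decomposition_maps} to this quotient, its joint kernel $\bigcap_{t\in G}\ker(E_t^{\RR^Y})$ is an ideal meeting $C_0(Y)$ trivially. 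Condition \ref{enu:separation_is_residual_detection_actions4} says $C_0(Y)$ detects ideals in the quotient, so that joint kernel must vanish; hence the quotient is reduced, by the argument of Remark \ref{rem:reduced_is_necessary}. As this holds for every $U\in\I^\varphi(X)$, the crossed product $F_{\RR}(\alpha,u)$ is residually reduced in the sense of Definition \ref{def:residually_reduced}.

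The only nonformal ingredient is imported rather than proved here: the isometric identification of the image of $C_0(X)$ in $B/\Ex(I)$ with $C_0(X\setminus U)$, which rests on the minimality of the $C^*$-norm (Lemma \ref{lem:incompressibility}) and underpins clause \ref{enu:separation_is_residual_detection4} of Lemma \ref{lem:separation_is_residual_detection}. The main point to keep honest is the bookkeeping in the last paragraph: one must verify that the quotient really is a crossed product admitting the Fourier decomposition before invoking Remark \ref{rem:reduced_is_necessary}, and this is exactly what Lemma \ref{lem:residually_reduced_characterisation} supplies.
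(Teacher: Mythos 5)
Your proposal is correct and follows essentially the same route as the paper: both deduce the equivalence of \ref{enu:separation_is_residual_detection_actions1}--\ref{enu:separation_is_residual_detection_actions4} by specialising Lemma \ref{lem:separation_is_residual_detection} via the dictionary of Lemma \ref{lem:restriction_of_action_to_set}, and both obtain the final claim from condition \ref{enu:separation_is_residual_detection_actions4} using the descended Fourier decomposition from the proof of Lemma \ref{lem:residually_reduced_characterisation}. The only cosmetic difference is that the paper argues the last part by contraposition through Remark \ref{rem:reduced_is_necessary}, whereas you unpack that remark directly via Lemma \ref{lem:Fourier_decomposition_maps}; the content is identical.
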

\begin{proof} By Lemma \ref{lem:restriction_of_action_to_set}, conditions \ref{enu:separation_is_residual_detection_actions1}-\ref{enu:separation_is_residual_detection_actions4} are special cases of conditions  \ref{enu:separation_is_residual_detection1}-\ref{enu:separation_is_residual_detection4} in Lemma \ref{lem:separation_is_residual_detection}.
Assume that $F_{\RR}(\alpha,u)$ is not residually reduced. Then there is $U\in \I^\varphi(X)$ such that $F_{\RR}(\alpha,u)/F_{\RR^U}(\alpha^U,u^U)$ (which is a crossed product for $(\alpha^{X\setminus U},u^{X\setminus U})$ that admits the FD, cf. the proof of Lemma \ref{lem:residually_reduced_characterisation}) is not reduced. Then  $C_0(X\setminus U)$ does not detect ideals in $F_{\RR}(\alpha,u)/F_{\RR^U}(\alpha^U,u^U)$ by Remark \ref{rem:reduced_is_necessary}. This contradicts \ref{enu:separation_is_residual_detection_actions4}.
\end{proof}
\begin{defn}[{\cite[Remark 1.18]{Sierakowski}}]
We say $\varphi$ is \emph{residually topologically free} if for any closed $\varphi$-invariant subset $Y\subseteq X$ the restriction $\varphi^Y:G\to \Homeo(Y)$, $\varphi^Y_t:=\varphi_{t}|_{Y}$,   is topologically free. 
\end{defn}

\begin{thm}[Separation of ideals]\label{thm:ideal structure}
Let $\alpha$ be an  action of a discrete group $G$ on $C_0(X)$.
The following conditions are equivalent:
\begin{enumerate}
\item\label{enu:ideal structure1} The dual action $\varphi$ is residually topologically free.
\item\label{enu:ideal structure2} $C_0(X)$ separates ideals in every  residually reduced  twisted  crossed product $F_{\RR}(\alpha,u)$, so
the map $U\mapsto F_{\RR^U}(\alpha^U,u^U)$ is a lattice isomorphism $\I^{\varphi}(X)\cong\I(F_{\RR}(\alpha,u)) $.
\item\label{enu:ideal structure3} For some (and hence all) $P\subseteq \{1,\infty\}$ the map $U\mapsto F^P(\alpha^U)$ gives $\I^{\varphi}(X)\cong\I(F^P(\alpha))$. 
\item\label{enu:ideal structure5} For any  twisted  crossed product $F_{\RR}(\alpha,u)$,
$C_0(X)$ separates ideals in $F_{\RR}(\alpha,u)$ if and only if $F_{\RR}(\alpha,u)$ is residually reduced.
\end{enumerate}
\end{thm}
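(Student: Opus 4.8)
The plan is to reduce everything to results already established for arbitrary actions by passing to the restricted systems on closed invariant subsets, organising the equivalences as \ref{enu:ideal structure1}$\Rightarrow$\ref{enu:ideal structure2}$\Rightarrow$\ref{enu:ideal structure3}$\Rightarrow$\ref{enu:ideal structure1}, with \ref{enu:ideal structure2}$\Leftrightarrow$\ref{enu:ideal structure5} handled separately. The ``some and hence all'' clause in \ref{enu:ideal structure3} I would treat by proving \ref{enu:ideal structure1}$\Rightarrow$\ref{enu:ideal structure3} for \emph{every} $P\subseteq\{1,\infty\}$ and proving the contrapositive of \ref{enu:ideal structure3}$\Rightarrow$\ref{enu:ideal structure1} in the strong form that failure of \ref{enu:ideal structure1} makes \ref{enu:ideal structure3} fail for every such $P$. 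The equivalence \ref{enu:ideal structure2}$\Leftrightarrow$\ref{enu:ideal structure5} comes essentially for free: the implication ``separates $\Rightarrow$ residually reduced'' is the final sentence of Lemma \ref{lem:separation_is_residual_detection_actions} and holds with no hypothesis, so \ref{enu:ideal structure5} asserts exactly that residual reducedness forces separation, which is \ref{enu:ideal structure2}.

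For \ref{enu:ideal structure1}$\Rightarrow$\ref{enu:ideal structure2} I would fix a residually reduced $F_{\RR}(\alpha,u)$ and verify condition \ref{enu:separation_is_residual_detection_actions4} of Lemma \ref{lem:separation_is_residual_detection_actions}. Given $U\in\I^\varphi(X)$, residual reducedness identifies the quotient $F_{\RR}(\alpha,u)/F_{\RR^U}(\alpha^U,u^U)$ with a \emph{reduced} crossed product for the restricted twisted action $(\alpha^Y,u^Y)$ on the closed invariant set $Y=X\setminus U$, which is again locally compact Hausdorff. Residual topological freeness gives that $\varphi^Y$ is topologically free, so Corollary \ref{cor:general_generalised_intersection} applied to $(\alpha^Y,u^Y)$ shows that $C_0(Y)$ detects ideals in that reduced quotient. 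This is precisely \ref{enu:separation_is_residual_detection_actions4}, whence $C_0(X)$ separates ideals, and the lattice isomorphism $U\mapsto F_{\RR^U}(\alpha^U,u^U)$ is condition \ref{enu:separation_is_residual_detection_actions3} of the same lemma.

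The step \ref{enu:ideal structure2}$\Rightarrow$\ref{enu:ideal structure3} is then immediate: by Proposition \ref{prop:residually_reduced_crossed_products}, $F^P(\alpha)$ is residually reduced for every $P\subseteq\{1,\infty\}$, so applying \ref{enu:ideal structure2} with $u\equiv 1$ yields $\I^\varphi(X)\cong\I(F^P(\alpha))$ for all such $P$. For \ref{enu:ideal structure3}$\Rightarrow$\ref{enu:ideal structure1} I would argue by contraposition: if $\varphi$ is not residually topologically free, choose a closed invariant $Y$ on which $\varphi^Y$ fails to be topologically free and set $U:=X\setminus Y\in\I^\varphi(X)$. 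Proposition \ref{prop:residually_reduced_crossed_products} identifies $F^P(\alpha)/F^P(\alpha^U)$ isometrically with $F^P(\alpha^Y)$, and Theorem \ref{thm:generalised_intersection_property} (its equivalence \ref{enu:generalised_intersection_property1}$\Leftrightarrow$\ref{enu:generalised_intersection_property4} for the untwisted action on $Y$) shows that $C_0(Y)$ does not detect ideals in $F^P(\alpha^Y)$. Hence condition \ref{enu:separation_is_residual_detection_actions4} of Lemma \ref{lem:separation_is_residual_detection_actions} fails for this single $U$, so $C_0(X)$ does not separate ideals in $F^P(\alpha)$ and \ref{enu:ideal structure3} breaks down for every $P\subseteq\{1,\infty\}$.

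I expect the proof to be mostly an assembly of earlier lemmas, so the point to guard most carefully --- the step where the argument really lives --- is the passage to subquotients: I must be sure that $F_{\RR}(\alpha,u)/F_{\RR^U}(\alpha^U,u^U)$ is genuinely the crossed product of the restricted system $(\alpha^Y,u^Y)$ rather than merely some abstract quotient Banach algebra. This is exactly what residual reducedness encodes in the general case and what Proposition \ref{prop:residually_reduced_crossed_products} supplies concretely for $F^P$ with $P\subseteq\{1,\infty\}$; without this identification the detection and non-detection theorems for $\varphi^Y$ cannot be transported back to the original crossed product.
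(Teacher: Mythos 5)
Your proposal is correct and follows essentially the same route as the paper's own proof, which is exactly the terse assembly of Lemma \ref{lem:separation_is_residual_detection_actions} with Corollary \ref{cor:general_generalised_intersection} for \ref{enu:ideal structure1}$\Rightarrow$\ref{enu:ideal structure2}, the last part of that lemma for \ref{enu:ideal structure2}$\Leftrightarrow$\ref{enu:ideal structure5}, and Proposition \ref{prop:residually_reduced_crossed_products} combined with Theorem \ref{thm:generalised_intersection_property} for the equivalence of \ref{enu:ideal structure3} with \ref{enu:ideal structure1}; you merely unpack the details the paper leaves implicit. Your emphasis on verifying that the quotient $F_{\RR}(\alpha,u)/F_{\RR^U}(\alpha^U,u^U)$ is genuinely a (reduced) crossed product for $(\alpha^Y,u^Y)$, rather than an abstract quotient, correctly identifies the one point where the argument really lives.
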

\begin{proof}\ref{enu:ideal structure1} implies \ref{enu:ideal structure2} by Lemma \ref{lem:separation_is_residual_detection_actions} and Corollary \ref{cor:general_generalised_intersection}.
Conditions \ref{enu:ideal structure2} and \ref{enu:ideal structure5} are equivalent by the last part of Lemma \ref{lem:separation_is_residual_detection_actions}.
By Proposition \ref{prop:residually_reduced_crossed_products}, \ref{enu:ideal structure2} implies \ref{enu:ideal structure3}.
This proposition combined with Theorem \ref{thm:generalised_intersection_property} shows that  \ref{enu:ideal structure3}  is equivalent to \ref{enu:ideal structure1}.
\end{proof} 
\begin{rem} If the action $\varphi$ is amenable,  its restriction $\varphi^Y$ to any closed $\varphi$-invariant set $Y\subseteq X$ is also amenable. It is a well known $C^*$-algebraic fact that  for every $U\in \I^\varphi(X)$ we have a natural isometric $*$-isomorphism
 $F^2(\alpha,u)/F^2(\alpha^U,u^U)\cong F^2(\alpha^{Y},u^{Y})$ where $Y:=X\setminus U$. Hence if $\varphi$ is amenable, then $F^2(\alpha,u)$ is residually reduced  by Theorem \ref{thm:ambenability_reduced_equals_full}. For $p\neq 1,2,\infty$ this is not clear, and the problem is that we do not know whether the quotient $F^p(\alpha,u)/F^p(\alpha^U,u^U)$ is an $L^p$-operator algebra.
If $G$ is finite, then all Banach algebra crossed products $F_{\RR}(\alpha,u)$ are residually reduced, as they are all equal as topological algebras, see  Remark 
\ref{rem:Banach_algebra_with_Fourier_decomposition}.
\end{rem}
\begin{defn}
Let $A$ be a Banach algebra.  The \emph{prime ideal space} of $A$ is the set $\Prime(A)$ of prime elements in the lattice $\I(A)$ of all closed ideals in $A$.
\end{defn}
Thus $p\in \Prime(A)$ if and only if $p\in \I(A)\setminus \{A\}$  and for any $I_1,I_2\in \I(A)$, the inclusion $I_1\cap I_2\subseteq p$ implies 
that either $I_1\subseteq p$ or $I_2\subseteq p$. We equip $\Prime(A)$ with the topology given by the sets
$$
U_I:=\{p\in \Prime(A): I\not\subseteq p\}, \qquad I\in \I(A).
$$
Note that the map $\I(A)\ni I\to U_I\subseteq \Prime(A)$ maps finite meets into intersections and arbitrary joins to unions. Thus $\{U_{I}\}_{I\in \I(A)}$ is indeed 
a topology on $\Prime(A)$.
\begin{lem}\label{lem:quasi_orbit_map_for_inclusions}
Assume that $A$ is a $C^*$-algebra whose primitive ideal space $\check{A}$ is second countable. Then $\check{A}=\Prime(A)$.
Let $A\subseteq B$ be an inclusion of Banach algebras which is symmetric in the sense that $\Ex(I)=\overline{IB}$ for every $I\in \Res(\I(B))$, cf. \cite[Definition 5.2]{Kwa-Meyer0}.
 \begin{enumerate}
\item\label{enu:quasi_orbit_map_for_inclusions1} For each $p\in \Prime(A)$ there is a  largest restricted ideal $\pi(p)$ in $A$ that is contained in $p$.
Morever, $\pi(p)=\pi(q)$ defines an open equivalence relation $\sim$ on $\Prime(A)$.
\item\label{enu:quasi_orbit_map_for_inclusions2} The map $\varrho:\Prime(B)\ni P\longmapsto [P\cap A]\in \Prime(A)/\sim$ is  well defined and continuous.
\end{enumerate}
\end{lem}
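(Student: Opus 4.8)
\emph{The plan.} I will establish the three assertions in turn, relying on the Galois connection between $\Res$ and $\Ex$ recalled above, on the minimality of the $C^*$-norm (Lemma \ref{lem:incompressibility}), and on approximate units of the ideals of the $C^*$-algebra $A$.

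\emph{The identity $\check A=\Prime(A)$.} Since a primitive ideal is always prime and the hull--kernel sets $U_I$ define both topologies, only the inclusion $\Prime(A)\subseteq\Prim(A)$ needs an argument. Given $p\in\Prime(A)$, I would consider $\hull(p)=\{P\in\Prim(A):p\subseteq P\}\cong\Prim(A/p)$; as $A/p$ is a prime $C^*$-algebra this is an irreducible closed set, and as a subspace of the second countable $\check A$ it is second countable. A primitive ideal space is always a Baire space, so, choosing a countable base and using irreducibility (every nonempty relatively open subset is dense), the intersection of those basic open sets that meet $\hull(p)$ is a nonempty dense $G_\delta$; any of its points $P_0$ is generic, whence $\hull(p)=\hull(P_0)$, and taking kernels (a $C^*$-algebra has trivial Jacobson radical) gives $p=P_0$, so $p$ is primitive.

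\emph{Existence of $\pi(p)$.} Let $\mathcal F=\{I\in\Res(\I(B)):I\subseteq p\}$ and $N=\clsp\bigcup_{I\in\mathcal F}I$ its join in $\I(A)$; clearly $N\subseteq p$. I claim $N$ is itself restricted, so that $\pi(p):=N$ is the largest restricted ideal contained in $p$. Since $\Ex$ preserves joins and the inclusion is symmetric, $\Ex(N)=\overline{NB}$; if $x\in\overline{NB}\cap A$ and $(e_\lambda)$ is an approximate unit for the ideal $N\trianglelefteq A$, then $e_\lambda x\to x$ while $e_\lambda x\in N$, so $x\in N$. Thus $\Res(\Ex(N))=N$, i.e. $N$ is restricted; this same computation shows that restricted ideals are closed under arbitrary joins and that $\Ex(I_1\cap I_2)=\Ex(I_1)\cap\Ex(I_2)$ for restricted $I_1,I_2$. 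Using the latter and primeness of $p$, one checks that $\pi(p)$ is a prime element of the lattice $\Res(\I(B))$ and that, for restricted $I$, one has $I\subseteq p\Leftrightarrow I\subseteq\pi(p)$.

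\emph{Openness of $\sim$ and the map $\varrho$.} Every open subset of $\Prime(A)$ has the form $U_J$, $J\in\I(A)$, so openness of $\sim$ reduces to showing the saturation of $U_J$ is open. From the previous paragraph, $q\in\mathrm{Sat}(U_J)$ iff $[q]\not\subseteq\hull(J)$, and the inclusion $\mathrm{Sat}(U_J)\subseteq U_{c(J)}$, with $c(J)=\Res(\Ex(J))$ the smallest restricted ideal above $J$, is immediate. For the reverse I must show $\bigcap_{p\sim q}p=\pi(q)$, equivalently that $[q]$ is dense in $\hull(\pi(q))$; granting this, $\mathrm{Sat}(U_J)=U_{c(J)}$ is open. \textbf{This density is the main obstacle:} it amounts to producing, for every $a\notin\pi(q)$, a primitive ideal $p\supseteq\pi(q)$ with $a\notin p$ and $\pi(p)=\pi(q)$. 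I would obtain it by a Baire category argument inside the second countable Baire space $\hull(\pi(q))$, using that primeness of $\pi(q)$ in $\Res(\I(B))$ forces each $\hull(I)$ with $I\in\Res(\I(B))$, $I\supsetneq\pi(q)$, to be nowhere dense, so that the restricted-generic points form a dense $G_\delta$. Finally, for part (2): symmetry together with $\Ex(I_1\cap I_2)=\Ex(I_1)\cap\Ex(I_2)$ shows that for $P\in\Prime(B)$ the restricted ideal $P\cap A$ is prime in $\Res(\I(B))$; the genericity argument then yields a prime $p$ of $A$ with $\pi(p)=P\cap A$, so $\varrho(P)=[p]$ is a well-defined point of $\Prime(A)/\sim$, independent of the chosen representative. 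Continuity is routine, since the preimage under $\varrho$ of the subbasic open set determined by a restricted ideal $I$ is $\{P\in\Prime(B):I\not\subseteq P\cap A\}=\{P\in\Prime(B):I\not\subseteq P\}=U_{\Ex(I)}\cap\Prime(B)$, which is open.
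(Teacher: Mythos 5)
Your preliminary steps are sound: the Baire/generic-point argument that $\Prime(A)=\check A$ in the second countable case is the standard one, and your approximate-unit computation showing that the join $N$ of all restricted ideals inside $p$ satisfies $N=\Res(\Ex(N))$ is exactly the right use of the symmetry hypothesis (it recovers the condition the paper calls (JR) via \cite{Kwa-Meyer0}). The genuine gap is the step you yourself flag as ``the main obstacle'': the density of $[q]$ in $\hull(\pi(q))$, equivalently $\bigcap_{p\sim q}p=\pi(q)$. Your sketch of it fails at two concrete points. First, nowhere-density of $\hull(I)\cap\hull(\pi(q))$ for restricted $I\supsetneq\pi(q)$ does \emph{not} follow from primeness of $\pi(q)$ in $\Res(\I(B))$: a nonempty relative interior is witnessed by an ideal $c\in\I(A)$ with $c\not\subseteq\pi(q)$ and $I\cap c\subseteq\pi(q)$, and this $c$ is \emph{not} restricted, so primeness in the restricted lattice cannot be invoked; the meet identity you derived, $\Ex(I_1\cap I_2)=\Ex(I_1)\cap\Ex(I_2)$ for two \emph{restricted} ideals, is insufficient here. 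What is actually needed is a mixed statement such as $I\cap c\subseteq\pi(q)\Rightarrow I\cap\Res(\Ex(c))\subseteq\pi(q)$, provable by a two-sided approximate-unit computation ($x=\lim_\lambda e_\lambda x e_\lambda$ together with $I\cdot\Ex(c)\cdot I\subseteq\Ex(I\cap c)$, which uses symmetry for $I$), but no such argument appears in your proposal. Second, even granting nowhere-density, the family of restricted ideals strictly above $\pi(q)$ is in general uncountable, so ``the restricted-generic points form a dense $G_\delta$'' does not follow from Baire. You must first reduce the bad set to a countable union of hulls, e.g.\ by extracting a countable join-base of $\Res(\I(B))$: take a countable base $\{J_n\}$ of $\I(A)$ (available since $\check A$ is second countable) and use the closure operator $\Res\circ\Ex$ together with your fact that restricted ideals are closed under joins to see that every restricted ideal is a join of the ideals $\Res(\Ex(J_n))$. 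Neither repair is indicated in the proposal, and the gap propagates: the well-definedness of $\varrho$ (every prime restricted ideal $P\cap A$ must be realised as $\pi(p)$) and your identification of the saturated open sets $\mathrm{Sat}(U_J)=U_{\Res(\Ex(J))}$ both rest on this same unproven density. Your continuity computation, by contrast, is fine once those points are in place.

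For comparison, the paper does not argue directly at all: it invokes the locale-theoretic machinery of \cite{Kwa-Meyer0} (Lemma~4.1, Theorem~4.5 and Lemma~4.8 there, with spatiality of the locale $\Res(\I(B))$ doing the work your density claim is meant to do) and merely verifies that those proofs never use that $B$ is a $C^*$-algebra once the symmetric-inclusion characterisation of restricted ideals is established. Your route could be completed into a self-contained point-set proof along the lines sketched above, which would have some independent value, but as written the central claim is an acknowledged obstacle supported by an argument that fails both on the primeness step and on the countability step.
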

\begin{proof} It is well known that $\check{A}=\Prime(A)$, cf. \cite[Corollary 3.9]{Kwa-Meyer0}.
The rest follows from the results of \cite{Kwa-Meyer0} where both $A$ and $B$ are assumed to be $C^*$-algebras, but a close inspection of the proofs shows that they work when $B$ is an arbitrary Banach algebra. 
Namely, the proof of \cite[Lemma 5.1]{Kwa-Meyer0} (which only uses the properties of Galois connection and that ideals in $A$ have approximate units) shows that for symmetric inclusions, we have
$I\in \Res(\I(B))$ if and only if $\Ex(I)=\overline{IB}$. 
This characterisation of elements in $\Res(\I(B))$ is used in the proof of \cite[Lemma 5.4]{Kwa-Meyer0}, to conclude in \cite[Corollary 5.5]{Kwa-Meyer0}
that the conditions (JR), (FR1) and (MIf) defined in \cite{Kwa-Meyer0} hold.
By \cite[Lemma~4.1 and Theorem 4.5]{Kwa-Meyer0} we have \ref{enu:quasi_orbit_map_for_inclusions1}, 
 $\pi$ induces a homeomorphism from quotient space $\Prime(A)/\sim$ onto the space of prime elements in  $\Res(\I(B))$, and $\Res(\I(B))\cong 
\Ex(\I(A))$ is a spatial locale  (as it is embedded by a frame morphism in the spatial locale $\I(A)$).
Now \ref{enu:quasi_orbit_map_for_inclusions2} follows from  \cite[Lemma~4.8]{Kwa-Meyer0} 
whose proof  uses that $B$ is a $C^*$-algebra only to conclude  that $\Res(\I(B))$ is a spatial locale,
 but as noted above we already have this property in our setting.
\end{proof}
\begin{defn}[cf. {\cite[Definitions 4.4, 4.10]{Kwa-Meyer0}}]
We call 
~\(\Prime(A)/{\sim}\) and  
  \(\varrho\colon\Prime(B) \to \Prime(A)/{\sim}\) in
  Lemma~\ref{lem:quasi_orbit_map_for_inclusions}  the \emph{quasi-orbit
    space} and  the
  \emph{quasi-orbit map} for the inclusion \(A\subseteq B\).
For an action $\varphi:G\to \Homeo(X)$ the \emph{quasi-orbit space} is defined  as 
the quotient space $\OO(X)=X/\sim$ for the equivalence relation
$x\sim y$ if and only if  $\overline{Gx}=\overline{Gy}$, where  $\overline{Gx}:=\overline{\{\varphi_t(x):t\in G\}}$ is the closure of the orbit of $x$, see \cite{Effros_Hahn}.
\end{defn}
\begin{prop}\label{prop:quasi-orbit_map}
Assume $X$ is second countable and  let  $F_{\RR}(\alpha,u)$ be a crossed product. The quasi-orbit space for the inclusion $C_0(X)\subseteq F_{\RR}(\alpha,u)$ coincides with the quasi-orbit space $\OO(X)$ for the action. Thus the  quotient map $X\ni x\mapsto [x]\in \OO(X)$ is open and
we have a continuous quasi-orbit map  
$$
\varrho:\Prime (F_{\RR}(\alpha,u))\ni P\longmapsto [x]\in \OO(X) \qquad \text{where}
\qquad P\cap C_0(X)=C_0(X\setminus \overline{Gx}).
$$ 
Moreover, $\varrho$ is a homeomorphism if and only if $C_0(X)$ separates ideals in $F_{\RR}(\alpha,u)$.
\end{prop}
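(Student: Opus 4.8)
The plan is to read the statement off from the abstract quasi-orbit machinery of Lemma~\ref{lem:quasi_orbit_map_for_inclusions}, applied to the inclusion $C_0(X)\subseteq B:=F_{\RR}(\alpha,u)$, after checking its two hypotheses. Since $C_0(X)$ is commutative with Gelfand spectrum $X$, its primitive ideal space is $X$; as $X$ is second countable, Lemma~\ref{lem:quasi_orbit_map_for_inclusions} gives $\Prime(C_0(X))=X$, with $x$ corresponding to $p_x:=C_0(X\setminus\{x\})$. Next I would verify that the inclusion is symmetric, i.e.\ $\Ex(I)=\overline{IB}$ for every restricted ideal $I$. By Lemma~\ref{lem:restriction_of_action_to_set} the restricted ideals are the $C_0(U)$ with $U\in\I^\varphi(X)$, and $\Ex(C_0(U))=F_{\RR^U}(\alpha^U,u^U)=\overline{BC_0(U)B}$ since $B$ has an approximate unit. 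Disintegrating $\mathrm{id}_B$ as in Lemma~\ref{lem:integration_disintegration} one has $v_t a=\alpha_t(a)v_t$ for $a\in C_0(U)$, with $\alpha_t(a)\in C_0(U)$ by invariance of $U$; hence $ba\in\overline{C_0(U)B}$ for every $b\in B$, giving $\overline{BC_0(U)B}\subseteq\overline{C_0(U)B}$ and thus symmetry.

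With both hypotheses in place, Lemma~\ref{lem:quasi_orbit_map_for_inclusions} supplies the largest restricted ideal $\pi(p)\subseteq p$ for each $p\in X$, an open equivalence relation $\sim$ on $X$, and a continuous map $\varrho\colon\Prime(B)\to X/{\sim}$. The next step is to identify $\sim$ with the quasi-orbit relation. A restricted ideal $C_0(U)$ is contained in $p_x$ exactly when $x\notin U$; since any invariant open set avoiding $x$ also avoids $\overline{Gx}$ (if $\varphi_t(x)\in U$ then $x\in\varphi_t^{-1}(U)=U$), the largest such $U$ is $X\setminus\overline{Gx}$, so $\pi(p_x)=C_0(X\setminus\overline{Gx})$. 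Hence $x\sim y\Leftrightarrow\pi(p_x)=\pi(p_y)\Leftrightarrow\overline{Gx}=\overline{Gy}$, which is precisely the relation defining $\OO(X)$, so $X/{\sim}=\OO(X)$, and openness of $\sim$ from Lemma~\ref{lem:quasi_orbit_map_for_inclusions}\ref{enu:quasi_orbit_map_for_inclusions1} yields openness of $x\mapsto[x]$. Finally, feeding this identification into $\varrho(P)=[P\cap C_0(X)]$ and using the homeomorphism $\pi$ onto the points of $\Res(\I(B))$, the point $P\cap C_0(X)$ of $\Res(\I(B))$ must be one of the $C_0(X\setminus\overline{Gx})$, which gives the stated formula $\varrho(P)=[x]$.

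It remains to prove the equivalence with separation, and here I would exploit that $\Prime(B)$ is, by definition, the space of prime elements of the lattice $\I(B)$. For the forward direction, if $C_0(X)$ separates ideals then Lemma~\ref{lem:separation_is_residual_detection_actions}\ref{enu:separation_is_residual_detection_actions3} makes $U\mapsto F_{\RR^U}(\alpha^U,u^U)$ a lattice isomorphism $\I^\varphi(X)\cong\I(B)$; a lattice isomorphism carries prime elements to prime elements, and as the hull--kernel topology is defined lattice-theoretically through the sets $U_J$, it induces a homeomorphism of the prime spaces. Under $\pi$ the prime elements of $\I^\varphi(X)=\Res(\I(B))$ are exactly the points $C_0(X\setminus\overline{Gx})$, i.e.\ $\OO(X)$, and the induced map $P\mapsto P\cap C_0(X)$ is precisely $\varrho$; hence $\varrho$ is a homeomorphism.

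The converse is where the real difficulty lies, and it is the step I would handle most carefully. A homeomorphism $\varrho$ only records that the prime elements of $\I(B)$ are homeomorphic to the points of the spatial locale $\Res(\I(B))=\I^\varphi(X)$, whereas separation---injectivity of $\Res$ on \emph{all} of $\I(B)$ by Lemma~\ref{lem:separation_is_residual_detection}---is a statement about arbitrary ideals; the gap is exactly the passage from prime ideals to general ideals. The plan is to close it using that $\Res(\I(B))$ is a spatial locale, established in the proof of Lemma~\ref{lem:quasi_orbit_map_for_inclusions}, together with the Galois connection formed by $\Res$ and $\Ex$: spatiality lets one recover an element of $\Res(\I(B))$ from the open set it cuts out in the point space, and transporting this back along $\varrho$ forces $\Res$ to be injective. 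Since the frame-theoretic arguments of \cite{Kwa-Meyer0} giving this equivalence use the $C^*$-structure of $B$ only through the spatiality of $\Res(\I(B))$, which we already have, I would invoke the corresponding result of \cite{Kwa-Meyer0} rather than reprove it, exactly in the spirit of Lemma~\ref{lem:quasi_orbit_map_for_inclusions}. Thus the main obstacle is this converse, and the key enabling fact throughout is the spatiality of $\Res(\I(B))$.
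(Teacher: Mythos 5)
Your proposal is correct and follows essentially the same route as the paper: verify that the inclusion $C_0(X)\subseteq F_{\RR}(\alpha,u)$ is symmetric, apply Lemma \ref{lem:quasi_orbit_map_for_inclusions} (so $\Prime(C_0(X))=X$ by second countability), identify $\pi(C_0(X\setminus\{x\}))=C_0(X\setminus\overline{Gx})$ so that $\sim$ becomes the quasi-orbit relation, and settle the ``homeomorphism iff separation'' clause by deferring to the machinery of \cite{Kwa-Meyer0} encapsulated in that lemma --- which is precisely what the paper does (its proof ends with ``the rest follows now from Lemma \ref{lem:quasi_orbit_map_for_inclusions}''). The only cosmetic difference is your symmetry check via the covariance relation $v_t a=\alpha_t(a)v_t$, where the paper simply observes that $C_0(U)$ and $F_{\RR^U}(\alpha^U,u^U)$ share an approximate unit, hence $\Ex(C_0(U))=\overline{C_0(U)\,F_{\RR}(\alpha,u)}$.
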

\begin{proof}
Let $A:=C_0(X)$ and $B:=F_{\RR}(\alpha,u)$. By Lemma \ref{lem:restriction_of_action_to_set}, we have $\Res(\I(B))=\{C_0(U):U\in \I^\varphi(X)\}$ and
$\Ex(C_0(U))=F_{\RR^U}(\alpha^U,u^U)$
for  $U\in \I^\varphi(X)$. For each $U\in \I^\varphi(X)$, $C_0(U)$ and $F_{\RR^U}(\alpha^U,u^U)$ have the same approximate unit. Thus the inclusion $A\subseteq B$ is symmetric   and we may apply Lemma \ref{lem:quasi_orbit_map_for_inclusions}. The canonical homeomorphism $X\ni x\mapsto C_0(X\setminus\{x\})\in \Prime(A)$  descends to a homeomorphism between the orbit quotient spaces, as
for each $x\in X$, the largest restricted ideal contained in $C_0(X\setminus\{x\})$ is $\pi(C_0(X\setminus\{x\}))=C_0(X\setminus\overline{Gx})$, and so
$x\sim y$  if and only if $\pi(C_0(X\setminus\{x\}))=\pi(C_0(X\setminus\{y\}))$. The rest follows now from Lemma \ref{lem:quasi_orbit_map_for_inclusions}.
\end{proof}
\begin{cor}\label{cor:effros-hahn}
Assume $X$ is second countable.  An action $\varphi$ is residually topologically free if and only if for every residually reduced  $F_{\RR}(\alpha,u)$ the quasi-orbit map is a homeomorphism $\Prime (F_{\RR}(\alpha,u))\cong  \OO(X)$.
\end{cor}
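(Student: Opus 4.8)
The plan is to obtain the corollary by gluing together Theorem~\ref{thm:ideal structure} and Proposition~\ref{prop:quasi-orbit_map}, applied one crossed product at a time. The key observation is that Proposition~\ref{prop:quasi-orbit_map} already supplies, for each individual crossed product $F_{\RR}(\alpha,u)$ (using that $X$ is second countable), a continuous quasi-orbit map $\varrho\colon\Prime(F_{\RR}(\alpha,u))\to\OO(X)$ together with the sharp criterion that $\varrho$ is a homeomorphism if and only if $C_0(X)$ separates ideals in $F_{\RR}(\alpha,u)$. Thus the only remaining task is to match this separation property, now ranging over all \emph{residually reduced} crossed products, with residual topological freeness of $\varphi$, which is exactly what Theorem~\ref{thm:ideal structure} provides.

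For the forward implication I would assume $\varphi$ is residually topologically free. By the equivalence \ref{enu:ideal structure1}$\Leftrightarrow$\ref{enu:ideal structure2} in Theorem~\ref{thm:ideal structure}, $C_0(X)$ then separates ideals in every residually reduced twisted crossed product $F_{\RR}(\alpha,u)$. Fixing any such $F_{\RR}(\alpha,u)$ and invoking the last sentence of Proposition~\ref{prop:quasi-orbit_map}, the quasi-orbit map $\varrho$ is a homeomorphism $\Prime(F_{\RR}(\alpha,u))\cong\OO(X)$. As the crossed product was arbitrary, this holds for all residually reduced $F_{\RR}(\alpha,u)$.

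For the converse I would assume that for every residually reduced $F_{\RR}(\alpha,u)$ the quasi-orbit map is a homeomorphism. Applying Proposition~\ref{prop:quasi-orbit_map} once more, being a homeomorphism forces $C_0(X)$ to separate ideals in each such $F_{\RR}(\alpha,u)$; hence $C_0(X)$ separates ideals in every residually reduced crossed product, which is precisely condition~\ref{enu:ideal structure2} of Theorem~\ref{thm:ideal structure}. That theorem then yields residual topological freeness of $\varphi$, closing the equivalence.

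There is no serious analytic obstacle, as all the substantive content already resides in Theorem~\ref{thm:ideal structure} and Proposition~\ref{prop:quasi-orbit_map}. The only point needing a little care is the bookkeeping of the universal quantifier over crossed products: Proposition~\ref{prop:quasi-orbit_map} is a statement about a single $F_{\RR}(\alpha,u)$, so one must apply it separately to each residually reduced crossed product and confirm that the ``for every residually reduced'' in the corollary aligns with the identically phrased quantifier in part~\ref{enu:ideal structure2} of Theorem~\ref{thm:ideal structure}. One should also record that second countability of $X$ enters only through Proposition~\ref{prop:quasi-orbit_map}, where it guarantees both that $\Prime(C_0(X))$ coincides with the primitive ideal space and that the quasi-orbit map $\varrho$ is well defined and continuous.
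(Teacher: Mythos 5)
Your proof is correct and follows exactly the paper's (one-line) argument: the paper proves Corollary~\ref{cor:effros-hahn} by combining Proposition~\ref{prop:quasi-orbit_map} with the equivalence \ref{enu:ideal structure1}$\Leftrightarrow$\ref{enu:ideal structure2} of Theorem~\ref{thm:ideal structure}, precisely as you do. Your additional remarks on quantifier bookkeeping and on where second countability enters are accurate but not needed beyond what those two results already state.
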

\begin{proof} Combine Proposition \ref{prop:quasi-orbit_map} and 
Theorem \ref{thm:ideal structure}.
\end{proof}
\begin{rem}
For $C^*$-algebraic crossed products the homeomorphism between the primitive ideal space and quasi-orbit space was already proved in \cite[Corollary 5.16]{Effros_Hahn} and \cite[Proposition 4.17]{Zeller-Meier} under the additional assumptions  that $G$ is  amenable, countable and acts freely on $X$. That freeness can be relaxed to residual topological freeness was noted in \cite[Corollary 4.9]{Renault91}. 
 Also in \cite[Remark 4.10]{Renault91} it was suggested that the amenability (of the action) of $G$ can be relaxed to exactness of the action, which was formalised  
in \cite[Theorem 1.20]{Sierakowski}. 
\end{rem}

\end{document}